\newtheorem{lemma}{Lemma}[section]
\newtheorem{theorem}[lemma]{Theorem}
\newtheorem{proposition}[lemma]{Proposition}
\newtheorem{prop}[lemma]{Proposition}
\newtheorem{cor}[lemma]{Corollary}
\newtheorem{claim*}{Claim}
\newtheorem{thm}[lemma]{Theorem}
\newtheorem{defn}[lemma]{Definition}
\newtheorem*{thm_nonum}{Theorem}
\theoremstyle{definition}
\newtheorem{remark}[lemma]{Remark}
\newtheorem{example}[lemma]{Example}
\def\st{\,|\,}
\def\e{\varepsilon}
\def\O{\mathcal{O}}
\def\P{\mathbb{P}}
\def\L{\mathcal{L}}
\DeclareMathOperator{\Bl}{Bl}
\newcommand{\A}{{\mathbb A}}
\newcommand{\G}{{\mathbb G}}
\newcommand{\PP}{{\mathbb P}}
\newcommand{\C}{{\mathbb C}}
\newcommand{\F}{{\mathbb F}}
\newcommand{\Q}{{\mathbb Q}}
\newcommand{\Z}{{\mathbb Z}}
\newcommand{\kk}{{\mathbf k}}
\newcommand{\calA}{{\mathcal A}}
\newcommand{\calB}{{\mathcal B}}
\newcommand{\calE}{{\mathcal E}}
\newcommand{\calL}{{\mathcal L}}
\newcommand{\calO}{{\mathcal O}}
\newcommand{\OO}{{\mathcal O}}
\newcommand{\mm}{{\mathfrak m}}
\DeclareMathOperator{\HH}{H}
\DeclareMathOperator{\hh}{h}
\DeclareMathOperator{\Tr}{Tr}
\DeclareMathOperator{\Char}{char}
\DeclareMathOperator{\im}{im}
\DeclareMathOperator{\End}{End}
\DeclareMathOperator{\Hom}{Hom}
\DeclareMathOperator{\Aut}{Aut}
\DeclareMathOperator{\Gal}{Gal}
\DeclareMathOperator{\Norm}{Norm}
\DeclareMathOperator{\Br}{Br}
\DeclareMathOperator{\Cl}{Cl}
\DeclareMathOperator{\divv}{div}
\DeclareMathOperator{\Sym}{Sym}
\DeclareMathOperator{\Div}{Div}
\DeclareMathOperator{\Pic}{Pic}
\DeclareMathOperator{\Jac}{Jac}
\DeclareMathOperator{\Spec}{Spec}
\DeclareMathOperator{\red}{red}
\DeclareMathOperator{\res}{res}
\DeclareMathOperator{\cores}{cores}
\DeclareMathOperator{\rank}{rank}
\DeclareMathOperator{\GL}{GL}
\newcommand{\isom}{\cong}
\newcommand{\pcyclic}[2]{\langle #1, #2\rangle_p}
\newcommand{\mcyclic}[2]{\langle #1, #2\rangle_m}
\newcommand{\tcyclic}[2]{\langle #1, #2\rangle_2}
\newcommand{\scrA}{\mathscr{A}}
\newcommand{\scrC}{\mathscr{C}}
\numberwithin{equation}{section}
\numberwithin{table}{section}
\title{Unramified Brauer classes on cyclic covers of the projective plane}
\author{Colin Ingalls, Andrew Obus, Ekin Ozman, and Bianca Viray}
\let\@wraptoccontribs\wraptoccontribs
\thanks{C.I. was partially supported by an NSERC Discovery Grant.  A.O. was partially supported by NSF Grant DMS-1265290. B.V. was  partially supported by NSF grant DMS-1002933.}
\subjclass[2010]{Primary: 14F22; Secondary: 12G05, 14J28, 14J50, 15A66, 16K50}
\keywords{Brauer group, Clifford algebra, K3 surface, $p$-cyclic cover, symbol algebra, quadratic form}
\begin{document}

	\begin{abstract}
	Let $X \to \PP^2$ be a $p$-cyclic cover branched over a smooth, connected curve $C$ of degree divisible by $p$, defined over a separably closed field of prime-to-$p$ characteristic.  We show that all (unramified) $p$-torsion Brauer classes on $X$ that are fixed by $\Aut(X/\PP^2)$ arise as pullbacks of certain Brauer classes on $\kk(\PP^2)$ that are unramified away from $C$ and a fixed line $L$.  We completely characterize these Brauer classes on $\kk(\PP^2)$ and relate the kernel of the pullback map to the Picard group of $X$.
	
	If $p = 2$, we give a second construction, which works over any base field of characteristic not 2, that uses Clifford algebras arising from symmetric resolutions of line bundles on $C$ to yield Azumaya representatives for the $2$-torsion Brauer classes on $X$.  We show that, when $p=2$, and $\sqrt{-1}$ is in our base field, both constructions give the same result.  
	\end{abstract}
	\maketitle
	
\section{Introduction}
	
	The Brauer group $\Br X$ of a smooth projective variety $X$ is a fundamental object of study and has a variety of applications.  For instance, Artin and Mumford used the birational invariance of the Brauer group to exhibit non-rational unirational three-folds~\cite{ArtinMumford}.  The Brauer group also encodes arithmetic information.  For a variety $X$ over a global field $k$, the Brauer group of $X$ can obstruct the existence of $k$-rational points, even when there are no local obstructions~\cite{Manin}.  In a different direction, Brauer groups are related to derived categories.  More precisely, derived categories of varieties are sometimes equivalent to derived categories of the category of sheaves of modules over an Azumaya algebra, or twisted sheaves; when this occurs these algebras and twistings are classified by the Brauer group.
	
	For each of these applications, it is often necessary to work with elements of the Brauer group quite explicitly, say as Azumaya algebras over $X$ or as central simple algebras over the function field of $X$ (We refer the reader to Section~\ref{sec:back} for background on the Brauer group and Brauer elements).  Unfortunately, there are few general methods for obtaining such representatives.  
	
	In 2005, van Geemen gave a construction of all $2$-torsion Brauer classes on a degree $2$ K3 surface of Picard rank $1$.  Precisely he proves:
	\begin{thm_nonum}[{\cite[\S9]{vanGeemen-Degree2}}]
		Let $\pi \colon X \to \PP^2_{\C}$ be a double cover branched over a smooth sextic curve $C$; then $X$ is a degree $2$ K3 surface.  Assume that $X$ has Picard rank $1$.  Then there is a natural isomorphism
		\begin{equation}\label{eq:IsomBr}
			\Br X[2] \stackrel{\sim}{\to} \left(\frac{\Pic C}{K_C}\right)[2].
		\end{equation}
		Moreover, for every $\alpha\in \Br X[2]$ there is a geometric construction of an Azumaya algebra on $X$ representing $\alpha$; the construction involves exactly one of
		\begin{enumerate}
			\item a double cover of $\PP^2\times\PP^2$ branched over a $(2,2)$ curve,
			\item a degree $8$ K3 surface, i.e., an intersection of $3$ quadrics in $\PP^5$, or
			\item a cubic $4$-fold containing a plane.
		\end{enumerate}
	\end{thm_nonum}
	
	The main result of this paper is an extension of van Geemen's result, \emph{including} the geometric constructions of Azumaya algebras, to degree $2$ K3 surfaces of any Picard rank and, more generally, to any smooth double cover of $\PP^2$. 
	\begin{thm}\label{thm:geometricBr}
		Let $k$ be a separably closed field of characteristic not $2$ and let $\pi \colon X \to \PP^2_{k}$ be a double cover branched over a smooth curve $C$ of degree $2d$.  Then there is an exact sequence
		\[
			0 \to \frac{\Pic X}{\Z H + 2\Pic X} \to 
			\left(\frac{\Pic C}{K_C}\right)[2] \stackrel{\scrA_X}{\longrightarrow} \Br X[2] \to 0.
		\]
		Moreover, for every $D\in\left(\Pic C/K_C\right)[2]$ there is a geometric description of an Azumaya algebra representing $\scrA_X(D)$; for $C$ in an open dense set of the moduli space of degree $2d$ plane curves, the construction involves exactly one of
		\begin{enumerate}
			\item a $(2,2)$ hypersurface of $\PP^{d - 1}\times \PP^2,$
			\item a $(2,1)$ hypersurface of $\PP^{2d - 1}\times \PP^2,$ or
			\item a cubic $(2d - 2)$-fold containing a $(2d - 4)$-dimensional linear space.
		\end{enumerate}
	\end{thm}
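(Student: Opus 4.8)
The plan is to establish the two assertions — the exact sequence and the geometric descriptions of the Azumaya algebras — by largely independent arguments and then match them up. Write $\sigma$ for the covering involution of $\pi$, set $H = \pi^{*}\O_{\PP^2}(1)$, and let $\widetilde{C} = \pi^{-1}(C)_{\red}$, so that $\pi|_{\widetilde{C}}\colon \widetilde{C}\xrightarrow{\sim} C$ and $\widetilde{C}\sim dH$ on $X$.

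First I would pin down $\Br X[2]$ cohomologically. Since $\Char k\ne 2$ and $k$ is separably closed, the Kummer sequence presents $\Br X[2]$ as the cokernel of $\Pic X/2\to\HH^{2}_{\et}(X,\Z/2)$, up to a correction coming from $2$-torsion in $\HH^{3}_{\et}(X,\Z/2)$ that is controlled for a smooth projective surface (and vanishes in the K3 case). Next I would analyze $\HH^{2}_{\et}(X,\Z/2)$ through the double cover: the $\sigma$-invariant part of the integral cohomology is generated by $H$, while a Gysin/residue analysis along $\widetilde{C}$ expresses the anti-invariant part in terms of $\HH^{1}(C,\Z/2)$ — equivalently $\Pic C[2]$ together with one extra class recording the theta-characteristic-versus-$2$-torsion alternative — which is exactly $(\Pic C/K_C)[2]$. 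Moreover $\sigma$ acts trivially on $\Br X[2]$: the latter is a quotient of the transcendental part of $\HH^{2}_{\et}(X,\Z/2)$, which lies in the $\sigma$-anti-invariant cohomology (the invariant part, $\Z H$, being algebraic), and $\sigma^{*}$ acts by $-1\equiv 1\pmod 2$ there; this is what lets the general $p$-cyclic construction, which a priori only yields the $\Aut(X/\PP^2)$-fixed classes, account for all of $\Br X[2]$. Combining these gives the surjection $\scrA_X\colon(\Pic C/K_C)[2]\twoheadrightarrow\Br X[2]$. The delicate step is the identification of $\ker\scrA_X$: one shows it equals the image of the restriction map $\Pic X\to(\Pic C/K_C)[2]$, $\mathcal{M}\mapsto\mathcal{M}|_{\widetilde{C}}$, that $H$ and $2\Pic X$ lie in this image, and that nothing else does. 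Because van Geemen's transcendental-lattice argument is unavailable once $\Pic X$ has higher rank, this amounts to matching the algebraic subgroup $\Pic X/2\hookrightarrow\HH^{2}_{\et}(X,\Z/2)$ with the restriction subgroup of $(\Pic C/K_C)[2]$, and that comparison is where the bulk of the cohomological work lies.

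For the Azumaya representatives I would pass through line-bundle-valued quadratic forms on $\PP^2$ and their even Clifford algebras. Given $D\in(\Pic C/K_C)[2]$, a suitable twist of $D$ admits a \emph{symmetric resolution} $0\to\mathcal{E}^{\vee}\otimes\mathcal{L}\xrightarrow{q}\mathcal{E}\to\mathcal{F}_D\to 0$ on $\PP^2$ with $q=q^{t}$ and $\mathcal{F}_D$ a sheaf on $C$ encoding $D$; then $\divv(\det q)=C$, so $q$ is a rank-$\rk\mathcal{E}$ quadratic form with smooth discriminant $C$, and its discriminant double cover is canonically $X$. When $\rk\mathcal{E}$ is even — which can be arranged — and $\Char k\ne 2$, the even Clifford algebra $C_{0}(q)$ is an Azumaya algebra on $X$, and one verifies $[C_{0}(q)]=\scrA_X(D)$ by a residue comparison, or by reduction to the Picard-rank-one, degree-$2$ case (where van Geemen's theorem applies) followed by a specialization argument. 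The three possibilities in the statement correspond to the shapes of a \emph{minimal} symmetric resolution of $\mathcal{F}_D$: a balanced resolution with $\mathcal{E}\cong\O_{\PP^2}(-1)^{\oplus d}$ makes the quadric bundle $\{q=0\}$ a $(2,2)$ hypersurface in $\PP^{d-1}\times\PP^2$; a resolution with $\mathcal{E}$ of rank $2d$ makes it a $(2,1)$ hypersurface in $\PP^{2d-1}\times\PP^2$; and the rank-$4$ case gives a quadric-surface bundle over $\PP^2$ which, by the classical projection construction, is realized as a cubic $(2d-2)$-fold containing a $(2d-4)$-dimensional linear space. I expect the main obstacle on this side to be the assertion that, for $C$ in a dense open locus of the moduli of smooth degree-$2d$ plane curves, every $D$ has a minimal symmetric resolution of exactly one of these three shapes: this is a genericity statement about symmetric matrix factorizations of $\mathcal{F}_D$, and is precisely the content supplied by the appendix.

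Finally, I would reconcile these with the paper's first description of the classes — pullbacks along $\pi$ of quaternion algebras over $\kk(\PP^2)$ unramified away from $C$ and a fixed line $L$ — which is what the residue bookkeeping in the cohomological argument produces directly; since $k$ is separably closed we have $\sqrt{-1}\in k$, so the comparison theorem identifying the symbol-algebra and Clifford-algebra descriptions applies, and the exact sequence together with the three geometric models follows.
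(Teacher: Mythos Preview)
Your overall architecture—Clifford algebras from symmetric resolutions, matched against an independently established exact sequence—mirrors the paper's, but the execution of the exact sequence is inverted relative to the paper, and your description of the generic partitions contains a genuine error.

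The paper does \emph{not} prove the exact sequence by decomposing $\HH^2(X,\Z/2)$ under $\sigma$; what you relegate to a final reconciliation step is in fact the primary engine. Given $D$ with $\divv(g_D)=2D+n_D(L\cap C)$ on $C$, one writes down the unique class $\phi(D)\in\Br U[2]$ (where $U=\PP^2\setminus(C\cup L)$) having residue $g_D$ along $C$ and $(f/\tilde\ell^{2d})^{n_D}$ along $L$, and sets $\psi=\pi^*\circ\phi$. Surjectivity onto $\Br X[2]$ combines Hochschild--Serre (every $\sigma$-fixed class on $\kk(X)$ comes from $\kk(\PP^2)$) with a residue argument showing that anything in $\pi^*(\Br\kk(\PP^2))\cap\Br X$ can be corrected by symbols $\langle f/\tilde\ell^{2d},g_i/\ell^{m_i}\rangle_2$ into $\im\phi$. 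The kernel is computed by exhibiting $\phi(j(Z))$ \emph{explicitly} as the cyclic algebra $\langle\tilde\ell^{2d}/f,\,g/\ell^n\rangle_2$ with $V(g)=\pi_*Z$, visibly split by $\pi^*$, and conversely showing via a norm criterion that every class in $\im\phi\cap\ker\pi^*$ arises this way. Your eigenspace route has a real obstacle: over $\Z/2$ there is no $(+1)/(-1)$ splitting, so ``anti-invariant part'' is not well-defined without lifting integrally and controlling the reduction; and the Gysin step you invoke is exactly the content the $\Br U$ argument supplies. The paper's route also yields explicit symbol representatives and works uniformly for $p$-cyclic covers.

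On the Clifford side, your trichotomy is the $d=3$ picture, not the general one. The cubic case is the partition $2d=3+1+\cdots+1$, giving a symmetric matrix of size $(2d-2)\times(2d-2)$, not $4\times4$; the quadric bundle has $(2d-4)$-dimensional fibres, and the cubic $(2d-2)$-fold sits in $\PP^{2d-1}$ with the distinguished $(2d-4)$-plane being the locus where the three base coordinates vanish. Your ``rank-$4$ quadric-surface bundle'' is only the K3 specialization. (The balanced bundle for the $(2,2)$ case is $\O(-(d-1))^{\oplus d}$, not $\O(-1)^{\oplus d}$, but that is merely a twist.) You have correctly identified the appendix's role: it proves the combinatorial inequality showing that only the partitions $2+\cdots+2$, $1+\cdots+1$, $3+1+\cdots+1$, and the trivial $2d$ achieve full moduli dimension.
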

	Note that if $X$ is a degree $2$ K3 surface, then $d = 3$ and we exactly recover van Geemen's result. (In~Section\ref{sec:examples}, we show that the open dense set contains all degree $2$ K3 surfaces of Picard rank $1$.)
	\begin{remark}
		If one is interested only in extending~\eqref{eq:IsomBr}, then van Geemen states a similar result (he attributes the proof to the referee) that holds for more general surfaces, although still retaining an assumption on Picard rank~\cite[Thm. 6.2]{vanGeemen-Degree2}.  More generally, recent work of Creutz and the last author extend~\eqref{eq:IsomBr} and~\cite[Thm. 6.2]{vanGeemen-Degree2} to any smooth surface that is birational to a double cover of a ruled surface\cite[Thm. II]{CreutzViray}.  This result also gives a central simple $\kk(X)$-algebra representative of every $2$-torsion Brauer class; however, this $\kk(X)$-algebra may not extend to an Azumaya algebra over all of $X$.
	\end{remark}
	
	A result of Catanese (\cite{Cat}) on symmetric resolutions of line bundles allows us to extend van Geemen's geometric construction and obtain a \emph{set-theoretic} map
	\[
		\left(\frac{\Pic C}{K_C}\right)[2] \stackrel{\scrA_X}{\to} \Br X[2].
	\]
	Yet, it is difficult to discern from this construction whether $\scrA_X$ is even a group homomorphism, let alone prove surjectivity or determine the kernel.
	Van Geemen's proof of the isomorphism proceeds via the isomorphism between the Brauer group and the dual of the transcendental lattice of $X$.  The argument relies on a classification of the index $2$ sublattices of the transcendental lattice, which does not seem feasible for an arbitrary double plane $X$.  We avoid this classification by using ramified Brauer classes on $\PP^2.$  In doing so, we prove a result for $p$-cyclic covers of $\PP^2$.
	
	Let $X$ be a smooth projective geometrically integral $p$-cyclic cover of $\PP^2$ over a separably closed field $k$ of characteristic different from $p$, branched over a smooth curve $C$ of degree $pd$, for some $d$.  Let $L$ be a general line in $\PP^2$ that intersects $C$ in $pd$ distinct points, and let $U := \PP^2\setminus(C\cup L)$
	\begin{theorem}\label{thm:residueBr}
		Let $\zeta$ denote a primitive $p$th root of unity. There is an action of $\Z[\zeta]$ on $\Br X$ and we have a commutative diagram where the top row is exact.
		\[
				\xymatrix{
					0 \ar[r] & 
					\frac{\Pic X}{\Z H + (1 - \zeta)\Pic X} \ar[r] & 
					\left(\frac{\Pic C}{\Z L}\right)[p] 
					\ar[r]^{\psi} \ar@{^(->}^{\phi}[d] & 
					\Br X[1 - \zeta] \ar[r] \ar@{^(->}[d] & 0 \\
					& & \Br U[p]\ar[r]^{\pi^*}
					&\Br \kk(X)[p]
				}
		\] 
	\end{theorem}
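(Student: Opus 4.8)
The plan is to derive everything from the residue description of $\Br U[p]$. Since $k$ is separably closed, $\Br \PP^2 = 0$, so by purity $\Br U[p]$ is identified with the group of residue data $(\chi_C, \chi_L)$, where $\chi_C \in H^1(\kk(C),\Z/p)$ is unramified on $C^\circ := C \setminus (C\cap L)$, $\chi_L \in H^1(\kk(L),\Z/p)$ is unramified on $L^\circ := L \setminus (C\cap L)$, and $\partial_P\chi_C + \partial_P\chi_L = 0$ for every $P \in C\cap L$. To define $\phi$: given $D$ with $pD \sim nL$ in $\Pic C$, choose $h \in \kk(C)^*$ with $\divv_C(h) = pD - nL$ and set $\chi^C_D := [\sqrt[p]{h}]$. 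One checks this is independent of all choices and depends only on the class of $D$ modulo $\Z L$, is additive in $D$, is unramified on $C^\circ$, and has residue $-n \bmod p$ at every $P \in C\cap L$ (since $L = \sum_{P\in C\cap L} P$ as a divisor on $C$); hence there is a unique $\chi^L_D$ (all of whose residues equal $n\bmod p$, using $H^1(\PP^1,\Z/p)=0$) making the pair compatible, and I let $\phi(D) \in \Br U[p]$ be the corresponding class. A computation with degrees and principal divisors shows that $D \mapsto \chi^C_D$ — and so $\phi$ — is injective.

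Next I would set up the $\Z[\zeta]$-action and the map $\psi$. Corestriction for $\kk(X)/\kk(\PP^2)$ preserves the property of being unramified, so it carries $\Br X$ into $\Br \PP^2 = 0$; since $\res\circ\cores = N := 1 + \sigma + \cdots + \sigma^{p-1}$ (with $\sigma$ a generator of $\Aut(X/\PP^2)$), the operator $N$ kills $\Br X$, so $\Br X$ is a $\Z[\zeta]$-module via $\zeta\mapsto\sigma$, $\Br X[1-\zeta] = (\Br X)^\sigma$, and $p\cdot(\Br X)^\sigma = N\cdot(\Br X)^\sigma = 0$. I claim $\pi^*\phi(D) \in \Br X$. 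Its only possible ramification is along the ramification curve $R$ (with $R \cong C$, $R \sim dH$, and $\pi$ totally ramified along it) and along $\pi^{-1}(L) =: M$ (irreducible, $M \sim H$, with $M\to L$ generically étale). Along $R$ the residue is $p\cdot\partial_C\phi(D) = 0$; along $M$ it is $\res_{M/L}(\chi^L_D)$, which vanishes because $\chi^L_D$ is a multiple of the class of the cover $M/L$ (both having constant residues at $C\cap L$) and restriction of a torsor along itself is trivial. So $\pi^*\phi(D) \in \Br X$, and being a pullback from $\PP^2$ it is $\sigma$-invariant; thus $\psi := \pi^*\circ\phi$ lands in $\Br X[1-\zeta]$, and the displayed square commutes by construction (using $\Br X \into \Br\kk(X)$ for regular $X$).

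For surjectivity of $\psi$: given $\beta \in \Br X[1-\zeta] = (\Br X)^\sigma \subseteq \Br\kk(X)^\sigma$, Hochschild--Serre for the cyclic extension $\kk(X)/\kk(\PP^2)$ (with $H^3(\Z/p,\kk(X)^*)=0$ by Hilbert 90) shows $\res\colon \Br\kk(\PP^2) \to \Br\kk(X)^\sigma$ is onto, so $\beta = \pi^*\alpha_0$. The class $\chi$ of the extension $\kk(X)/\kk(\PP^2)$ is killed by $\pi^*$, and for a prime divisor $\Gamma \neq C, L$ one has $\partial_\Gamma(\chi,b) = v_\Gamma(b)\,\chi|_\Gamma$, while vanishing of $\partial_{\Gamma'}\beta$ forces $\partial_\Gamma\alpha_0 \in \langle\chi|_\Gamma\rangle$; hence a suitable $b \in \kk(\PP^2)^*$ produces $\alpha_1 = \alpha_0 - (\chi,b)$ with $\pi^*\alpha_1 = \beta$ and $\alpha_1$ unramified away from $C\cup L$. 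Now $\partial_R(\pi^*\alpha_1) = p\,\partial_C\alpha_1 = 0$ and $\partial_M(\pi^*\alpha_1) = \res_{M/L}(\partial_L\alpha_1) = 0$ force $\partial_C\alpha_1$ and $\partial_L\alpha_1$ to be $p$-torsion, so $\alpha_1 \in \Br U[p]$; moreover $\partial_L\alpha_1$ is a multiple of the class of $M/L$, hence has constant residues at $C\cap L$, and so does $\partial_C\alpha_1$. Writing $\partial_C\alpha_1 = [\sqrt[p]{h}]$ and reading off $\divv_C(h) = pE - mL$, we get $D := [E] \in (\Pic C/\Z L)[p]$ with $\partial_C\phi(D) = \partial_C\alpha_1$; since a class in $\Br U[p]$ is determined by its residue along $C$ (the $L$-residue being forced by compatibility and $H^1(\PP^1,\Z/p)=0$), $\phi(D) = \alpha_1$ and $\psi(D) = \beta$.

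Finally, $\ker\psi = \phi^{-1}\bigl(\ker(\pi^*\colon \Br U \to \Br V)\bigr)$ where $V := \pi^{-1}(U)$ and $V \to U$ is étale of degree $p$. Since $\Pic U = 0$ and the generator $F_C/\ell^{pd}$ of $\O(U)^*/k^*$ is a norm from $V$, one gets $H^1(\Z/p,\O(V)^*) = H^2(\Z/p,\O(V)^*) = 0$, so Hochschild--Serre for $V/U$ gives $\ker(\pi^*\colon\Br U\to\Br V) \cong H^1(\Z/p,\Pic V)$. Here $\Pic V = \Pic X/\langle R, M\rangle = \Pic X/\Z H$, on which the norm $N = \pi^*\pi_*$ acts as $0$ (its image lies in $\pi^*\Pic\PP^2 = \Z H$), so $H^1(\Z/p,\Pic V) = (\Pic V)_{\Z/p} = \Pic X/(\Z H + (1-\sigma)\Pic X)$. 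Running the surjectivity argument with $\beta = 0$ shows every element of $\ker(\pi^*\colon\Br U\to\Br V)$ equals $\phi(D)$ for some $D$, so $\phi$ restricts to an isomorphism $\ker\psi \xrightarrow{\ \sim\ } \Pic X/(\Z H + (1-\sigma)\Pic X)$, which together with the surjectivity of $\psi$ completes the exact row. I expect the main obstacle to be this last step — correctly assembling the two Hochschild--Serre spectral sequences and verifying the vanishings $H^i(\Z/p,\O(V)^*) = 0$ and $N|_{\Pic V} = 0$ — with the ramification-adjustment inside the surjectivity proof a close second.
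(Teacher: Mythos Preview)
Your argument is correct, and the first three paragraphs follow the paper almost exactly: the residue description of $\Br U[p]$ and the definition of $\phi$ match Proposition~\ref{prop:kerToBr}; your corestriction argument for the $\Z[\zeta]$-action is Proposition~\ref{Pringaction}; and your surjectivity proof is precisely Lemmas~\ref{lem:SurjectionOnInvariants} and~\ref{lem:Maximality}.

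The genuine difference is in the kernel step. The paper defines an explicit map $j\colon \Pic X/(\Z H + (1-\zeta)\Pic X) \to (\Pic C/\Z L)[p]$ by sending a curve $Z\subset X$ to $\pi_*(Z\cap\tilde C)$, then shows by a direct cyclic-algebra computation that $\phi(j(Z)) = \pcyclic{\tilde\ell^{dp}/f}{g/\ell^n}$ (where $V(g)=\pi_*Z$), from which injectivity of $j$ and $\im j = \ker\psi$ are read off by hand. You instead invoke Hochschild--Serre for the \'etale $\Z/p$-cover $V\to U$ and compute $\ker(\pi^*\colon\Br U\to\Br V)\cong H^1(G,\Pic V)$ abstractly, after verifying $H^1(G,\O(V)^*)=H^2(G,\O(V)^*)=0$ and $N|_{\Pic V}=0$. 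These verifications are correct: $\O(V)^*\cong k^*\times\Z$ with $\sigma$ acting on the generator $g$ (a $p$th root of $f/\ell^{pd}$) by a primitive $p$th root of unity, and $N=\pi^*\pi_*$ on $\Pic X$ lands in $\Z H$. Your approach is cleaner and avoids the explicit symbol manipulations of \S\ref{subsec:Proof}; the paper's buys an explicit geometric description of the map $\Pic X/(\cdots)\to(\Pic C/\Z L)[p]$, which, while not strictly needed for the theorem statement, is what one would want when comparing with the Clifford-algebra construction in \S\ref{sec:main}.
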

	To complete the proof of Theorem~\ref{thm:geometricBr}, we show that the map $\scrA_X$ factors through a map $\scrA_U \colon \left(\frac{\Pic C}{K_C}\right)[2] \to \Br U[2]$, and then show that $\scrA_U$ and $\phi$ agree by comparing ramification data.
	
	We expect Theorem~\ref{thm:residueBr} to be of independent interest.  Even though we do not describe $\phi$ or $\psi$ by constructing central simple algebras or Azumaya algebras, the theorem should be helpful in constructing such representatives.  Indeed, this result shows that every $(1-\zeta)$-torsion Brauer class on $X$ can be obtained by pullback of a \emph{ramified Brauer class on $\PP^2$}, which are much simpler objects to understand.

	{\subsection{Related work and open questions}
	
		Some of the questions and ideas in this paper have been considered previously by a number of authors.  Although we do not need their work in the proofs, we would be remiss if we didn't mention them.  We do so briefly here.
	
		\subsubsection{Pulling back Brauer classes}
			The kernel of the map $\pi^*\colon (\Br \PP^2\setminus C)[2] \to \Br X$ was studied previously by Ford, who determined bounds for its dimension as an $\F_2$ vector space~\cite{Ford}.
	
		\subsubsection{Symmetric resolutions of line bundles}
		Our proof relies on work of Catanese about the existence of symmetric resolutions of line bundles.  These symmetric resolutions have been used by many authors,	sometimes to explicitly construct Brauer classes.  Examples include~\cites{Beauville-Determinantal, BCZ, ChanI}.}
	
	\subsection{Notation}
		Throughout, $p$ will be a fixed prime and $k$ will denote a separably closed field of characteristic $q \neq p$ ($q$ may be $0$).  
	
		Let $C = V(f)\subset \PP^2_k$ be a {smooth and irreducible} curve of degree $e$ and let $L = V(\ell)\subset \PP^2_k$ be a line that intersects $C$ in $e$ distinct points.  Define $U := \PP^2\setminus (C\cup L)$.  We will use $\tilde\ell$ to denote any linear form such that $V(\tilde\ell)\neq L.$
	
		Unless otherwise stated, we will assume that $e = pd$ for some integer $d$.  In this case, we can define $\pi\colon X\to \PP^2$ to be the $p$-cyclic cover of $\PP^2$ branched over $C$.  The morphism $\pi$ gives an isomorphism from $\tilde C := \left(\pi^{-1}(C)\right)_{\red}$ to $C$.
	
		We set $b_1(X)$ and $b_2(X)$ equal to the first and second Betti numbers of $X$, respectively.  That is, $b_i(X) = \dim \HH^i(X, \Q_r)$ for any prime $r \ne q$, when $i = 1, 2$.  These invariants are well defined since $X$ is a smooth surface (see, e.g., \cite[\S3.5]{Grothendieck2}).   Let the Hodge number $\hh^{0,1}(X) = \frac{1}{2}b_1(X)$ denote the irregularity of $X$, and the Hodge number $\hh^{2,0}(X)$ denote the geometric genus of $X$.  The Picard rank of $X$ is denoted $\rho(X)$.    

	For any field $F$, we let $F_s$ denote the separable closure. Note that all sheaves and cohomology are in the \'etale topology unless otherwise stated; we refer the reader to~\cite{Milne} for the background on \'etale cohomology. 
 
	\subsection{Outline}

		We begin by recalling some basic definitions and facts about Brauer groups and Clifford algebras in Section~\ref{sec:back}. In Section~\ref{sec:pcyclic}, we determine the numerical invariants of $X$ and show that there exists a natural ring action $\Z[\zeta]$ on $\Br X$, where $\zeta$ is a $p$th root of unity.  Section~\ref{sec:pullback} contains the proof of Theorem~\ref{thm:residueBr}.
		
		Starting with Section~\ref{sec:theta} we specialize to the case $p=2$; there we construct the maps $\scrA_X$ and $\scrA_U$. In Section~\ref{sec:main}, we prove that $\phi$ and $\scrA_U$ agree, thereby completing the proof of Theorem~\ref{thm:geometricBr}.  We specialize to degree $2$ K3 surfaces in Section~\ref{sec:examples} and show that the open dense subset in Theorem~\ref{thm:geometricBr} contains all Picard rank $1$ surfaces.  Lastly, we have an appendix by Hugh Thomas which proves a combinatorial proposition needed in Section~\ref{sec:theta}.

\section*{Acknowledgements}

	We thank Asher Auel and Brendan Hassett for expert advice.  This project began at the American Institute of Mathematics during a workshop on Brauer groups and obstruction problems.  We thank AIM for providing us with excellent working conditions.  We also thank the other members of our working group at the conference, Evangelia Gazaki, Alexei Skorobogatov, and Olivier Wittenberg, for helpful discussions.

\section{Background on the Brauer group {and Clifford algebras}} \label{sec:back}
Let $F$ be a field.
\begin{defn} Let $S$ be the monoid of isomorphism classes of central simple algebras over $F$ with finite rank, with operation given by tensor product. We say that two elements $A,B$ of $S$ are Morita equivalent if and only if the matrix algebras $M_{n}(A)$  and $M_k(B)$ are isomorphic for some integers $n,k$. Then the Brauer group $\Br F$ of $F$ is defined to be the quotient of $S$ by this equivalence relation. 
\end{defn}

Note that $\Br F$ is an abelian group under tensor product since tensor product commutes with taking matrix algebras. The inverse of an element $[A] \in \Br F$ is $[A^{op}]$ where $A^{op}$ is the opposite algebra obtained by reversing the order of multiplication in $A$.

The following is a classical fact from class field theory that relates Brauer groups to cohomology. 

\begin{theorem}[{\cite[Corollary 3.16]{MilneCFT}}]
For any separable closure $F_s$ of $F$ there exists a natural isomorphism between $\Br F$ and $\HH^2(\Gal(F_s/F), F^\times_s)$.
\end{theorem}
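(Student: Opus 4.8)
The plan is to prove the isomorphism one finite level at a time and then pass to a colimit. Fix a finite Galois extension $K/F$ inside $F_s$ with group $G = \Gal(K/F)$, and write $\Br(K/F) \subseteq \Br F$ for the subgroup of classes represented by central simple $F$-algebras that become matrix algebras after base change to $K$. The first point to record is that, by Wedderburn's theorem, every Brauer class is represented by a central division algebra, which is finite-dimensional and hence split by some finite separable, thus (after enlarging) finite Galois, extension of $F$; consequently $\Br F = \bigcup_K \Br(K/F)$ as $K$ ranges over the finite Galois subextensions of $F_s/F$.

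For a fixed such $K/F$, I would establish a group isomorphism $\HH^2(G, K^\times) \cong \Br(K/F)$ via crossed-product algebras: to a $2$-cocycle $c \colon G\times G \to K^\times$ one associates the $F$-algebra $A_c = \bigoplus_{\sigma\in G} K u_\sigma$ with relations $u_\sigma x = \sigma(x)u_\sigma$ for $x\in K$ and $u_\sigma u_\tau = c(\sigma,\tau)u_{\sigma\tau}$, the cocycle identity being precisely what makes this associative. One checks directly that $A_c$ is central simple over $F$ of dimension $|G|^2$ and split by $K$, that cohomologous cocycles give isomorphic algebras, that coboundaries give $\Mat_{|G|}(F)$, and that $[A_c]+[A_{c'}] = [A_{cc'}]$ in $\Br F$; this yields an injective homomorphism $\HH^2(G,K^\times)\hookrightarrow \Br(K/F)$. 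Surjectivity --- that every central simple algebra split by $K$ is Brauer equivalent to some $A_c$ --- is the step with real content: one passes to the nonabelian cohomology description, in which $F$-forms of $\Mat_n(K)$ split by $K$ are classified by $\HH^1(G,\PGL_n(K))$, uses the exact sequence $1\to K^\times \to \GL_n(K)\to \PGL_n(K)\to 1$ to obtain a connecting map $\HH^1(G,\PGL_n(K))\to \HH^2(G,K^\times)$, and invokes Hilbert's Theorem 90 in the form $\HH^1(G,\GL_n(K))=0$ to see this connecting map is injective; combined with the Noether--Skolem theorem (applied to the two embeddings of $K$ into the algebra coming from its regular representation) this identifies the form with a crossed product up to a matrix factor. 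Naturality in $F$ is immediate from the crossed-product formulas.

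Finally I would pass to the limit. For $K\subseteq K'$ the inclusion $\Br(K/F)\subseteq \Br(K'/F)$ corresponds, under the identifications above, to the inflation map $\HH^2(\Gal(K/F),K^\times)\to \HH^2(\Gal(K'/F),K'^\times)$; taking the direct limit over all finite Galois $K\subseteq F_s$ therefore gives $\Br F \cong \varinjlim_K \HH^2(\Gal(K/F),K^\times) = \HH^2(\Gal(F_s/F),F_s^\times)$, the last equality being the definition of the (continuous) cohomology of the profinite group $\Gal(F_s/F)$ with coefficients in the discrete module $F_s^\times$. I expect the main obstacle to be the surjectivity/crossed-product step --- producing a cocycle from an arbitrary central simple algebra split by $K$ --- since that is where Hilbert 90 and Noether--Skolem genuinely enter; the remaining steps are cocycle bookkeeping together with the basic structure theory of central simple algebras.
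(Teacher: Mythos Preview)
The paper does not prove this statement; it is quoted as a classical result with a citation to Milne's class field theory notes and no argument is given. There is therefore nothing in the paper to compare your proposal against.

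That said, your outline is the standard proof and is essentially correct. One remark on the surjectivity step: the route through $\HH^1(G,\PGL_n(K))$ works, but it is more direct (and closer to what Milne does) to argue as follows. Given a central simple $F$-algebra $A$ split by $K$, replace $A$ by a Brauer-equivalent algebra of degree exactly $[K:F]$; then $K$ embeds in $A$ as a maximal commutative subalgebra. By Noether--Skolem, each $\sigma\in G$ is induced by conjugation by some $u_\sigma\in A^\times$, and comparing $u_\sigma u_\tau$ with $u_{\sigma\tau}$ produces the cocycle $c$ directly, exhibiting $A$ itself (not merely something Brauer-equivalent) as a crossed product. Your nonabelian-cohomology formulation packages the same ingredients but requires an extra compatibility check --- that $\theta\circ\delta$ is the identity on Brauer classes --- which you allude to but do not spell out. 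Either way the argument goes through.
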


The definition of the Brauer group can be extended to a scheme $Y$ as in the series~\cites{Grothendieck1,Grothendieck2,Grothendieck}.

An $\calO_Y$-algebra $\calA$ is an Azumaya algebra over $Y$ if it is a coherent {locally free} $\calO_Y$-module and the specialization of $\calA$ at each point $y \in Y$ is a central simple algebra over the residue field at $y$. Two Azumaya algebras $\calA$, $\calA'$ over $X$ are Morita equivalent  if there exist locally free $\calO_Y$-modules $\calE$, $\calE'$ of finite rank such that $\calA \otimes_{\calO_Y} \End(\calE) $  is isomorphic to $\calA' \otimes_{\calO_Y} \End(\calE') $. This relation induces an equivalence relation and the tensor product operation carries over the quotient as in the case of central simple algebras over fields. The Azumaya Brauer group $\Br_{Az} Y$ of $Y$ is the abelian group given by the equivalence classes of Azumaya algebras.

The cohomological Brauer group of $Y$, denoted by $\Br_{\textup{cohom}} Y$, is the torsion part of the second cohomology group with values in $\G_m$, the sheaf of units. If $Y$ is smooth, this cohomology group is torsion, so $\Br_{\textup{cohom}} Y = \HH^2(Y, \G_m)$.  Furthermore, if $Y$ is a normal surface, if $Y$ is the separated union of two affine schemes, or if $Y$ is quasi-projective, then by Gabber's theorem the cohomological Brauer group is the same as the Azumaya Brauer group~\cite{deJong}.

Henceforth, we will only consider smooth quasiprojective varieties in which case all the above notions agree, thus we will simply refer to the Brauer group of $Y$ or $\Br Y$.

\begin{remark}
We have $\Br (\Spec F) = \Br F$ since $\HH^2(\Spec F, \G_m) \cong \HH^2(\Gal(F_s/F),F^\times_s)$. 
\end{remark}

It is possible to relate $\Br Y$ and the Brauer group of its function field $\Br \kk(Y)$ using the functoriality of \'etale cohomology. If $Y$ is an integral scheme, then using the inclusion $\Spec \kk(Y) \rightarrow X$, we obtain a map $\Br Y \rightarrow \Br \kk(Y)$. 

\begin{theorem}[{\cite[Chap. IV, Cor. 2.6]{Milne}}]\label{thm:inj}
Let $Y$ be a regular integral scheme with function field $\kk(Y)$.  Then the induced map $\Br Y \rightarrow \Br \kk(Y)$ is an injection.
\end{theorem}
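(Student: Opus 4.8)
The plan is to translate the statement into \'etale cohomology and split the restriction map into two injective pieces. Since $\Br \Spec F = \HH^2(\Spec F, \G_m)$ for any field $F$, and since $\Br Y$ maps to $\HH^2(Y, \G_m)$ injectively --- by Gabber's theorem in the smooth quasi-projective case we actually need, and in general because the Azumaya Brauer group always injects into $\HH^2(Y, \G_m)_{\tors}$ --- it suffices to prove that the restriction map $\HH^2(Y, \G_m) \to \HH^2(\kk(Y), \G_m)$ along the generic point $g \colon \Spec \kk(Y) \hookrightarrow Y$ is injective. I would factor it as
\[
\HH^2(Y, \G_m) \longrightarrow \HH^2(Y, g_* \G_m) \longrightarrow \HH^2(\kk(Y), \G_m),
\]
where the second arrow is the degree-$2$ edge morphism of the Leray spectral sequence for $g$ (the composite recovers the restriction map because the adjunction unit $\G_m \to Rg_*\G_m$ on $Y$ factors through $\G_m \to g_*\G_m$), and show that both arrows are injective.

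For the first arrow, I would invoke the divisor exact sequence of \'etale sheaves on $Y$,
\[
0 \longrightarrow \G_m \longrightarrow g_*\G_m \longrightarrow \bigoplus_{y} (i_y)_* \Z \longrightarrow 0,
\]
where $y$ runs over the codimension-one points of $Y$ and $i_y \colon \overline{\{y\}} \hookrightarrow Y$ is the closed immersion, with $\Z$ the constant sheaf. This is where the hypothesis that $Y$ is regular enters: exactness may be checked on stalks at geometric points, i.e.\ on the strict henselizations $\calO^{\mathrm{sh}}_{Y,\bar z}$, and these are regular local rings, hence unique factorization domains by the Auslander--Buchsbaum theorem, for which the sequence is the familiar ``units $\to$ nonzero elements $\to$ divisors'' with vanishing divisor class group. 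Passing to cohomology, the first arrow is injective as soon as $\HH^1\bigl(Y, \bigoplus_y (i_y)_* \Z\bigr) = \bigoplus_y \HH^1\bigl(\overline{\{y\}}, \Z\bigr) = 0$; and each summand vanishes because $\HH^1$ of a connected scheme with constant coefficients $\Z$ is $\Hom_{\mathrm{cont}}(\pi_1^{\et}, \Z)$, which is trivial since $\pi_1^{\et}$ is profinite while $\Z$ is torsion-free.

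For the second arrow, I would run the Leray spectral sequence $\HH^p(Y, R^q g_* \G_m) \Rightarrow \HH^{p+q}(\kk(Y), \G_m)$. Hilbert's Theorem~90 gives $R^1 g_* \G_m = 0$, since its stalk at a geometric point $\bar z$ is $\HH^1$ of the spectrum of the field $\Frac\bigl(\calO^{\mathrm{sh}}_{Y,\bar z}\bigr)$, which vanishes. Hence $E_2^{0,1} = \HH^0(Y, R^1 g_*\G_m) = 0$, and the five-term exact sequence of the spectral sequence forces the edge map $\HH^2(Y, g_*\G_m) \to \HH^2(\kk(Y), \G_m)$ to be injective. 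Composing the two steps,
\[
\Br Y \hookrightarrow \HH^2(Y, \G_m) \hookrightarrow \HH^2(Y, g_*\G_m) \hookrightarrow \HH^2(\kk(Y), \G_m) = \Br \kk(Y),
\]
which is what we want. I expect the only genuinely non-formal ingredient to be the exactness of the divisor sequence on the \'etale site --- that is, identifying $g_*\G_m$ and its cokernel at strict henselizations and invoking Auslander--Buchsbaum --- while the vanishing of $\HH^1$ with $\Z$-coefficients, the vanishing of $R^1 g_*\G_m$, and the spectral-sequence bookkeeping are all routine. A more hands-on but heavier alternative would be to prove Grothendieck's purity statement identifying $\Br Y$ with $\bigcap_{y} \Br \calO_{Y,y}$ inside $\Br \kk(Y)$, but the sheaf-theoretic argument above is shorter and works uniformly in all dimensions.
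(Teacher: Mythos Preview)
The paper does not prove this; it simply cites \cite[Chap.~IV, Cor.~2.6]{Milne}. Your argument is precisely the standard proof found there, so in that sense your approach matches the paper's (outsourced) proof.

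That said, one technical slip deserves correction. In the divisor exact sequence you should take $i_y \colon \Spec \kappa(y) \to Y$ (the inclusion of the point), not the closed immersion of the closure $\overline{\{y\}}$. With the closure, the stalk of $\bigoplus_y (i_y)_*\Z$ at a geometric point $\bar z$ contributes one copy of $\Z$ for each codimension-one $y$ with $z \in \overline{\{y\}}$, whereas the actual cokernel of $\G_m \to g_*\G_m$ at $\bar z$ is the free abelian group on height-one primes of $\calO^{\mathrm{sh}}_{Y,\bar z}$; these differ whenever some $\overline{\{y\}}$ has several branches through $z$, so your sequence fails to be exact there. Relatedly, the identification $\HH^1_{\et}(\overline{\{y\}}, \Z) = \Hom_{\mathrm{cont}}(\pi_1^{\et}, \Z)$ you invoke requires $\overline{\{y\}}$ to be geometrically unibranch (e.g.\ normal); for a nodal curve it fails and $\HH^1_{\et}$ with $\Z$-coefficients is nonzero. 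Both issues disappear with the point formulation: the sequence is then exact on stalks, and a second application of Leray (now for $i_y$, using that the stalks of $R^1(i_y)_*\Z$ are $\HH^1$ of spectra of fields with $\Z$-coefficients, hence vanish) gives $\HH^1(Y, (i_y)_*\Z) \hookrightarrow \HH^1(\Spec \kappa(y), \Z) = 0$. With this fix the rest of your argument goes through unchanged.
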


Using Theorem \ref{thm:inj}, we will regard elements of $\Br Y$ as elements of the Brauer group of a field, $\Br \kk(Y)$. 

\subsection{Cyclic algebras}

It is a deep theorem of Merkurjev-Suslin that the Brauer group is generated by a special type of central simple algebra, namely the cyclic algebras.  In addition, cyclic algebras are the central simple algebras that are easiest to construct and study.  We recall their basic properties.

\begin{defn}
Let $F'/F$ be a cyclic Galois extension of degree $m$, fix an isomorphism $\chi \colon \Gal(F'/F) \rightarrow \Z/m\Z$, {and let $\tau := \chi^{-1}(1)$.} Let $b\in F^\times$.  Then the cyclic algebra associated to $\chi$ and $b$ is
\[
	\mcyclic{\chi}{b} := \frac{F[x;\tau]}{(x^m - b)},
\]
where $F[x;\tau]$ denotes the twisted polynomial ring, i.e., $\alpha x=x\tau(\alpha)$ for any $\alpha\in F.$
\end{defn}

\begin{remark}
	By abuse of notation, we sometimes use $\mcyclic{F'/F}{b}$ to denote $\mcyclic{\chi}{b}$, even though the cyclic algebra does depend on the choice of $\chi$ (equivalently $\tau$).
\end{remark} 

If $k'/k$ is a cyclic extension of degree $m$, we will sometimes denote the cyclic algebra $\mcyclic {\kk(Y_{k'})/\kk(Y)}{f}$ as $\mcyclic {k'/k}{f}$ since $\Gal(\kk(Y_{k'})/\kk(Y))$ is isomorphic to $\Gal(k'/k)$.

\begin{example}
 Every quaternion algebra is cyclic if the characteristic of the field $k$ is not $2$.
\end{example}

The following presentation of a cyclic algebra is useful for computations. We refer to \cite[Sections 2.5, 4.7]{CSAandGalC} for its proof.

\begin{proposition}\label{coh} 
Assume that the characteristic of $F$ is relatively prime to $m$ and fix $b \in F^\times$, $F'/F$ as above.  We consider $b$ as representing an element of $\HH^1(F,\mu_m)$ via the Kummer isomorphism $F^\times/F^{\times m} \simeq \HH^1(F,\mu_m)$.  Let $\chi$ be an element in $ \Hom(\Gal(F_s/F) , \Z/m\Z)$ which is isomorphic to $\HH^1(F,\Z/m\Z)$.  Consider the cup product $ \HH^1(F,\Z/m\Z)  \times \HH^1(F,\mu_m) \rightarrow \HH^2(k,\mu_m) \simeq \Br F[m]$. The image of the tuple $(\chi,b)$ under this cup product is the cyclic algebra $\mcyclic{\chi}{b}$. 
\end{proposition}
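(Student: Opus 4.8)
The plan is to compute the class $\chi\cup(b)\in\HH^2(F,\mu_m)\cong\Br F[m]$ in two stages and match it against the crossed-product description of $\mcyclic{\chi}{b}$; the bridge between them is the Bockstein homomorphism $\delta\colon\HH^1(F,\Z/m\Z)\to\HH^2(F,\Z)$ attached to $0\to\Z\xrightarrow{\,m\,}\Z\to\Z/m\Z\to0$, together with the standard compatibility of cup products with connecting maps.

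First I would recall the crossed-product presentation of the cyclic algebra. Writing $\tau=\chi^{-1}(1)$ for the chosen generator of $\Gal(F'/F)$, the algebra $\mcyclic{\chi}{b}$ is $\bigoplus_{i=0}^{m-1}F'x^i$ with $xa=\tau(a)x$ for $a\in F'$ and $x^m=b$, so that its class in $\Br(F'/F)=\HH^2(\Gal(F'/F),(F')^\times)$ is represented by the explicit $2$-cocycle $c_b(\tau^i,\tau^j)=b^{\lfloor(i+j)/m\rfloor}$ for $0\le i,j\le m-1$; inflating along $\Gal(F_s/F)\twoheadrightarrow\Gal(F'/F)$ gives a representative of $[\mcyclic{\chi}{b}]$ in $\HH^2(F,F_s^\times)=\Br F$. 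On the other side, lifting $\chi$ to the set map $\widetilde\chi\colon\Gal(F_s/F)\to\{0,1,\dots,m-1\}\subset\Z$, the class $\delta\chi$ is represented by $(\sigma,\rho)\mapsto\tfrac1m\bigl(\widetilde\chi(\sigma)+\widetilde\chi(\rho)-\widetilde{\chi(\sigma\rho)}\bigr)\in\{0,1\}$; since $\widetilde\chi$ factors through $\Gal(F'/F)$ with $\widetilde\chi(\tau^i)=i$, this cocycle is the inflation of $(\tau^i,\tau^j)\mapsto\lfloor(i+j)/m\rfloor$. Hence the cup product $\delta\chi\cup b$ — formed via $\HH^2(F,\Z)\times\HH^0(F,F_s^\times)\to\HH^2(F,F_s^\times)$ and $n\otimes u\mapsto u^n$ — is represented by exactly $c_b$, so $\delta\chi\cup b=[\mcyclic{\chi}{b}]$ in $\Br F$.

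It then remains to identify the image of $\chi\cup(b)$ under the injection $\HH^2(F,\mu_m)\hookrightarrow\HH^2(F,F_s^\times)$ with $\delta\chi\cup b$. Under the Kummer isomorphism, $(b)$ is the image of $b\in\HH^0(F,F_s^\times)$ under the connecting map of $1\to\mu_m\to F_s^\times\xrightarrow{\,m\,}F_s^\times\to1$, the coefficient pairing $\Z/m\Z\otimes\mu_m\to\mu_m$ refining $\Z\otimes F_s^\times\to F_s^\times$ compatibly with the two short exact sequences; the graded-commutativity of the cup product and the standard identity relating cup products to connecting homomorphisms (see \cite[Sections 2.5, 4.7]{CSAandGalC}) then force the two classes to agree in $\HH^2(F,F_s^\times)[m]=\Br F[m]$. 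Combining the two stages gives $\chi\cup(b)=[\mcyclic{\chi}{b}]$.

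Alternatively one can bypass the Bockstein and compute $\chi\cup(b)$ directly on inhomogeneous cochains: choosing $\beta\in F_s$ with $\beta^m=b$, the class $(b)$ is $\sigma\mapsto\sigma(\beta)/\beta$, and $\chi\cup(b)$ is represented by a $\mu_m$-valued $2$-cocycle that one checks differs from the inflation of $c_b$ by the coboundary of $\sigma\mapsto\beta^{\widetilde\chi(\sigma)}$. I expect the real difficulty to be neither stage individually but the bookkeeping of conventions: the choice of generator $\tau$ versus $\tau^{-1}$, the orientation of the cup product and the sign in its graded-commutativity, and the normalization of the isomorphism $\HH^2(F,\mu_m)\cong\Br F[m]$ all matter, and flipping any one of them replaces $[\mcyclic{\chi}{b}]$ by $[\mcyclic{\chi}{b}^{\mathrm{op}}]=[\mcyclic{\chi^{-1}}{b}]=[\mcyclic{\chi}{b^{-1}}]$. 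The argument must therefore be run with a single fixed set of conventions throughout, which is precisely the care taken in \cite{CSAandGalC}.
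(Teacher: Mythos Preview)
Your argument is correct and is essentially the standard proof found in Gille--Szamuely \cite[Sections 2.5, 4.7]{CSAandGalC}. Note, however, that the paper does not actually supply its own proof of this proposition: the sentence preceding the statement reads ``We refer to \cite[Sections 2.5, 4.7]{CSAandGalC} for its proof,'' and nothing further is given. So there is no independent argument in the paper to compare yours against; you have simply written out (carefully, and with appropriate attention to the sign and orientation conventions) the proof that the paper cites.
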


If $F$ contains a primitive $m^{th}$ root of unity (in particular, this means $\Char(F) \nmid m$), then we may define $m$-symbol algebras.
\begin{defn}
	Assume that $F$ contains a primitive $m^{th}$ root of unity and let $a,b,\in F^{\times}$.  Then the $m$-symbol algebra $\mcyclic{a}{b}$ is the $F$-algebra generated by $x$ and $y$ such that $x^m=a, y^m=b$ and $xy=\zeta_m yx$ where $\zeta_m$ is a primitive $m$th root of unity.
\end{defn}
\begin{theorem}\label{symbol}
Assume that $F$ contains a primitive $m^{th}$ root of unity and let $a,b,\in F^{\times}$. Then any cyclic algebra $\mcyclic {\chi}{b}$ is isomorphic to the symbol algebra $\mcyclic {a}{b}$ for some $a \in F^\times$.
\end{theorem}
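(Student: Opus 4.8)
The plan is to reduce the statement, via Kummer theory, to the fact that the cyclic algebra attached to a radical (Kummer) extension is literally a symbol algebra, and then to upgrade Brauer-equivalence to isomorphism by a dimension count. Fix a primitive $m$th root of unity $\zeta_m \in F$ and use it to identify $\mu_m$ with $\Z/m\Z$; then the Kummer sequence gives isomorphisms $F^\times/F^{\times m}\cong\HH^1(F,\mu_m)\cong\HH^1(F,\Z/m\Z)$. The extension $F'/F$ is cut out by the character $\chi \in \Hom(\Gal(F_s/F),\Z/m\Z)\subset \HH^1(F,\Z/m\Z)$; transporting $\chi$ across these isomorphisms yields a class in $F^\times/F^{\times m}$, and I take $a\in F^\times$ to be any representative. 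Concretely this says $F' = F(\sqrt[m]{a})$, with $\chi$ recording the Galois action on $\sqrt[m]{a}$.

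Next I would show $\mcyclic{\chi}{b}\cong\mcyclic{a}{b}$ for this $a$. By Proposition~\ref{coh}, the class of $\mcyclic{\chi}{b}$ in $\Br F[m]$ is the cup product of $\chi\in\HH^1(F,\Z/m\Z)$ with $b\in\HH^1(F,\mu_m)$; applying the same proposition to the Kummer extension $F(\sqrt[m]{a})/F$ with its tautological character identifies $\mcyclic{a}{b}$ (which, by inspecting the relations $x^m=a$, $y^m=b$, $xy=\zeta_m yx$, is exactly the cyclic algebra of that extension with parameter $b$) with the cup product of the class of $a$ and of $b$. Since $\chi$ and the class of $a$ correspond under the $\zeta_m$-twisted identification of $\HH^1(F,\Z/m\Z)$ with $\HH^1(F,\mu_m)$, these two cup products coincide, so $[\mcyclic{\chi}{b}]=[\mcyclic{a}{b}]$; both algebras are central simple of dimension $m^2$ over $F$, and Brauer-equivalent central simple algebras of equal dimension are isomorphic, which finishes the argument. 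Alternatively one can exhibit the isomorphism by hand: using the Lagrange-resolvent form of Hilbert's Theorem 90, choose $\theta\in F'$ with $\tau(\theta)=\zeta_m\theta$, where $\tau=\chi^{-1}(1)$, so that $a:=\theta^m\in F^\times$ and $F'=F(\theta)$; then in $\mcyclic{\chi}{b}=F'[x;\tau]/(x^m-b)$ the elements $\theta$ and $x$ satisfy $\theta^m=a$, $x^m=b$, and $\theta x = \zeta_m x\theta$, which are exactly the defining relations of $\mcyclic{a}{b}$ with $\theta$ and $x$ as the two symbol generators; the resulting homomorphism is surjective and both algebras have $F$-dimension $m^2$, so it is an isomorphism.

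The step that requires the most care is the bookkeeping of conventions: matching the twisting rule $\alpha x = x\tau(\alpha)$ used to define $\mcyclic{\chi}{b}$ with the commutation rule $xy = \zeta_m yx$ used to define $\mcyclic{a}{b}$, and tracking that the extension $F'$ together with its chosen generator $\tau$ pins down $a$ only modulo $m$th powers and modulo replacing $a$ by $a^j$ with $\gcd(j,m)=1$. These ambiguities are precisely why, to land on the relation $xy=\zeta_m yx$ as written, one must take $\theta$ with eigenvalue $\zeta_m$ (rather than $\zeta_m^{-1}$) for $\tau$; since the theorem only asserts the existence of \emph{some} suitable $a$, none of this affects the conclusion.
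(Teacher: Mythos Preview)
Your argument is correct and follows essentially the same cohomological route as the paper: identify $\chi$ with a class $a\in F^\times/F^{\times m}$ via the Kummer isomorphism $\HH^1(F,\Z/m\Z)\cong\HH^1(F,\mu_m)$ afforded by $\zeta_m$, and appeal to Proposition~\ref{coh} to match the two cup products in $\Br F[m]$. You are in fact more careful than the paper, which stops at Brauer-equivalence; your dimension count (both algebras have $F$-dimension $m^2$) is the clean way to promote this to an isomorphism, and your alternative explicit construction via the eigenvector $\theta\in F'$ with $\tau(\theta)=\zeta_m\theta$ is a nice self-contained check that bypasses cohomology entirely.
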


\begin{proof}
It is easier to see this using the cohomological interpretation given in Proposition \ref{coh}. Since $F$ contains a root of unity $\zeta_m,$ we have an isomorphism $\Z/m\Z  \rightarrow \mu_m$ as $\Gal(F_s/F)$-modules. This induces the following isomorphisms: $\HH^1(F,\Z/m\Z) \simeq \HH^1(F,\mu_m) \simeq F^\times/(F^\times)^m$. Say $a,b \in F^\times$ are fixed. The elements $a,b$ can be seen in the corresponding cohomology groups using the mentioned isomorphisms. Then using the cup product $\HH^1(F,\Z/m\Z)  \times \HH^1(F,\mu_m) \rightarrow \HH^2(F,\mu_m)$ we get an element of  $\HH^2(F,\mu_m) \simeq \Br F [m]$.
\end{proof}

Now we will list some basic properties of cyclic and symbol algebras which will be useful in later sections. For proofs of these facts we refer to \cite{CSAandGalC}, \cite{Saltman}, and \cite{Serre}.

\begin{defn} A central simple algebra $A$ over $F$ is called split if $A$ is isomorphic to the matrix algebra $M_n(F)$.  If $F'$ is a field such that the $F'$-algebra $A \otimes_F F'$ is isomorphic to $M_r(F')$ for some integer $r \geq 1$, then we say that $F'$ splits $A$.
\end{defn}

\begin{proposition}\label{prop:BrauerFacts}
 Let $A$ be central simple algebra over a field $F$.
 \begin{enumerate}

\item[i)]  $A$ is a cyclic algebra if and only if there exists a cyclic extension of $F$ splitting $A$. More precisely,
$A$ is split by $F'$, an $m$-cyclic Galois extension of $F,$ if and only if $A$ is isomorphic to $\mcyclic{\chi}{b}$ for some $b \in F^\times$ and some isomorphism $\chi \colon \Gal(F'/F) \to \Z/m$. 

\item[ii)] If $A$ is a cyclic algebra over $F$ and $F'$ is a field extension of $F,$ then $A \otimes_F F'$ is a cyclic algebra over $F'$.

\item[iii)] In the Brauer group of $F$, 
\[
	[\mcyclic{\chi}{a}]+[\mcyclic{\chi}{b}]=[\mcyclic{\chi}{ab}] 
	\quad \textup{ and } \quad
	[\mcyclic{\chi}{a}]+[\mcyclic{\psi}{a}]=[\mcyclic{\chi+\psi}{a}] 
\]
for all $a,b \in F^\times$ and $\chi,\psi \in \HH^1(G,\Z/m\Z)$, where $G=\Gal(F'/F)$. In particular, if $F$ contains a primitive $m^{th}$ root of unity, then for all $a,b,c\in F^\times$
\[
	[\mcyclic{a}{b}]+[\mcyclic{a}{c}]=[\mcyclic{a}{bc}] 
	\quad \textup{ and } \quad
	[\mcyclic{a}{c}]+[\mcyclic{b}{c}]=[\mcyclic{ab}{c}]. 
\]
 
\item[iv)] \label{norm} Two cyclic algebras $(\chi,a)$ and $(\chi,b)$ are equivalent if and only if $a/b $ is the norm of an element in $F'$, where $F'$ is an $m$-cyclic Galois extension of $F$ and $\chi \colon \Gal(F'/F) \rightarrow \Z/m\Z$ is a fixed isomorphism. 

\item[v)] \label{trivial} If $F$ contains a primitive $m^{th}$ root of unity, then $\mcyclic{a}{(-a)^n}$ is trivial in $\Br F$ for all $a\in F^{\times}$ and $n\in \Z$.

 \end{enumerate}
 
\end{proposition}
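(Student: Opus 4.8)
The plan is to deduce all five parts from the cohomological description of cyclic algebras in Proposition~\ref{coh} together with the periodicity of the Tate cohomology of a finite cyclic group; the only part that needs a genuine computation rather than formal manipulation is v). For iii) I would use Proposition~\ref{coh} to write $[\mcyclic{\chi}{b}]=\chi\cup[b]$ in $\HH^2(F,\mu_m)\cong\Br F[m]$, where $\chi\in\HH^1(F,\Z/m\Z)$ and $[b]$ is the Kummer class of $b$ in $\HH^1(F,\mu_m)$; the two displayed identities are then just bilinearity of the cup product, using that $b\mapsto[b]$ and $\chi\mapsto\chi$ are homomorphisms. For the symbol versions, when $\mu_m\subseteq F$ a choice of $\zeta_m$ fixes an isomorphism $\mu_m\xrightarrow{\sim}\Z/m\Z$ sending the Kummer class of $a$ to a character $\chi_a$, and by the computation underlying Theorem~\ref{symbol} the class $[\mcyclic{a}{b}]$ equals (up to sign) $\chi_a\cup[b]$, which is linear in each slot; since $a\mapsto\chi_a$ is a homomorphism, bilinearity again yields the stated relations.

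For i) and iv) I would fix a degree-$m$ cyclic extension $F'/F$ with group $G$ and use the classical isomorphism $\Br(F'/F)=\HH^2(G,{F'}^\times)\cong\hat{\HH}^0(G,{F'}^\times)=F^\times/\N_{F'/F}({F'}^\times)$ coming from the periodicity of $\hat{\HH}^\bullet(G,-)$ for the cyclic group $G$, under which the class of $b\in F^\times$ is sent to $[\mcyclic{\chi}{b}]$ (equivalently, the crossed product attached to $G$ and the cocycle determined by $b$ is the twisted polynomial ring $F'[x;\tau]/(x^m-b)$). A direct check that $\mcyclic{\chi}{b}\otimes_F F'\cong M_m(F')$ --- the decomposition $F'\otimes_F F'\cong\prod_{i=1}^m F'$ gives a complete set of orthogonal idempotents cyclically permuted by the unit $x\otimes 1$ --- shows that $F'$ splits $\mcyclic{\chi}{b}$; conversely, any central simple $A$ split by $F'$ has $[A]\in\Br(F'/F)$, hence $[A]=[\mcyclic{\chi}{b}]$ for some $b\in F^\times$, and when $\dim_F A=m^2$ Brauer equivalence upgrades to an isomorphism. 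This is i), and iv) follows because the map $b\mapsto[\mcyclic{\chi}{b}]$ --- a homomorphism by iii) --- is identified via the isomorphism above with the quotient map $F^\times\to F^\times/\N_{F'/F}({F'}^\times)$, so $[\mcyclic{\chi}{a}]=[\mcyclic{\chi}{b}]$ if and only if $a/b$ is a norm. Part ii) then follows formally from i): if $F'/F$ is cyclic of degree $m$ splitting $A$ and $F''/F$ is arbitrary, the compositum $F'F''$ is a cyclic extension of $F''$ (its Galois group injects into the cyclic group $\Gal(F'/F)$) and it splits $A\otimes_F F''$ because $(A\otimes_F F'')\otimes_{F''}(F'F'')\cong A\otimes_F(F'F'')\cong(A\otimes_F F')\otimes_{F'}(F'F'')\cong M_m(F'F'')$; if the compositum equals $F''$ then $A\otimes_F F''$ is already split.

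The substantive step is v). By iii) it suffices to show $[\mcyclic{a}{-a}]=0$, since $[\mcyclic{a}{(-a)^n}]=n\,[\mcyclic{a}{-a}]$. If $a$ is an $m$-th power in $F^\times$ then $\mcyclic{a}{-a}$ is already split, so assume $F(\alpha):=F[x]/(x^m-a)$ is a field with $\alpha^m=a$. The conjugates of $\alpha$ over $F$ are the $\zeta_m^i\alpha$, so $\N_{F(\alpha)/F}(\alpha)=\zeta_m^{m(m-1)/2}a$; a short case check ($m$ even versus $m$ odd) shows $\zeta_m^{m(m-1)/2}\in\{\pm1\}$ and that $-\zeta_m^{-m(m-1)/2}$ is an $m$-th power in $F^\times$ in either case, whence $-a=\N_{F(\alpha)/F}(\alpha)\cdot(\text{an $m$-th power})\in\N_{F(\alpha)/F}\bigl(F(\alpha)^\times\bigr)$ and $\mcyclic{a}{-a}=\mcyclic{\chi_a}{-a}$ is split by iv). (Alternatively one can prove the full Steinberg relation $[\mcyclic{a}{1-a}]=0$ by evaluating $\prod_i(T-\zeta_m^i\alpha)=T^m-a$ at $T=1$ to get $1-a=\N_{F(\alpha)/F}(1-\alpha)$, and then use the identity $-a=(1-a)/(1-a^{-1})$.) I expect this norm computation to be the only real obstacle: everything else reduces to bilinearity and functoriality of the cup product together with the periodicity of cyclic-group cohomology, all of which are standard (see \cite{CSAandGalC}, \cite{Serre}).
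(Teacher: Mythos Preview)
The paper does not actually prove this proposition: it simply states ``For proofs of these facts we refer to \cite{CSAandGalC}, \cite{Saltman}, and \cite{Serre}.'' Your sketch is the standard route those references take (cup-product bilinearity for iii), periodicity of Tate cohomology of a cyclic group for i) and iv), functoriality for ii)), so on those parts there is nothing to compare and your outline is fine.

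There is one genuine gap in your treatment of v). You write ``If $a$ is an $m$-th power in $F^\times$ then $\mcyclic{a}{-a}$ is already split, so assume $F(\alpha):=F[x]/(x^m-a)$ is a field.'' But over a field containing $\mu_m$ these are not the only two possibilities: if $a$ has order $d$ with $1<d<m$ in $F^\times/(F^\times)^m$ (e.g.\ $m=4$, $F=\Q(i)$, $a=4$), then $x^m-a$ is reducible yet $a$ is not an $m$-th power, so $[F(\alpha):F]=d<m$ and your norm computation $\N_{F(\alpha)/F}(\alpha)=\zeta_m^{m(m-1)/2}a$ fails. The same issue affects your alternative via $1-a=\N_{F(\alpha)/F}(1-\alpha)$. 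The fix is easy and standard: either group the factors of $\prod_{i=0}^{m-1}(1-\zeta_m^i\alpha)$ into $\Gal(F(\alpha)/F)$-orbits so that $1-a$ is visibly a product of norms from $F(\alpha)$, or bypass norms entirely by an explicit change of generators. For the latter, in $\mcyclic{a}{-a}$ with $x^m=a$, $y^m=-a$, $xy=\zeta_m yx$, set $z=yx^{-1}$; one checks $xz=\zeta_m zx$ and $z^m=-\zeta_m^{-m(m-1)/2}\in\{\pm 1\}$, so after replacing $z$ by $-z$ if $m$ is odd one gets $z^m=1$, whence $\mcyclic{a}{-a}\cong\mcyclic{a}{1}$ is split. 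This works uniformly and avoids the irreducibility hypothesis.
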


For completeness we restate the theorem of Merkurjev and Suslin which was mentioned in the beginning of the section.

\begin{theorem}[{Merkurjev-Suslin, see e.g.~\cite[Thm. 4.6.6]{CSAandGalC}}]\label{MS}
Assume that $\Char(F)\nmid m$. Then any representative of a class of order dividing $m$ in $\Br F$ is Morita equivalent (but not necessarily isomorphic)  to the tensor product of cyclic algebras. 
\end{theorem}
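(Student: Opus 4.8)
The plan is to reduce the theorem to the Merkurjev--Suslin norm residue theorem, which I would treat as the single deep input and cite rather than reprove, supplying the remaining formal reductions myself. The starting observation is that, by the dictionary of Proposition~\ref{coh}, the statement is equivalent to the \emph{surjectivity} of the cup product
\[
	\HH^1(F,\Z/m\Z)\times\HH^1(F,\mu_m)\longrightarrow\HH^2(F,\mu_m)\cong\Br F[m],
\]
whose image is exactly the subgroup generated by classes of cyclic algebras $\mcyclic{\chi}{b}$; so surjectivity says precisely that every $m$-torsion class is a tensor product of cyclic algebras. A first reduction: $\Br F[m]$ is the direct sum of its $p$-primary parts $\Br F[p^{v_p(m)}]$ over $p\mid m$, so it suffices to prove the result when $m$ is a prime power.

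Next I would reduce to the case $\zeta_m\in F$, a primitive $m$-th root of unity. Set $E := F(\zeta_m)$. Granting the root-of-unity case over $E$, every class of $\Br E[m]$ is a tensor product of cyclic $E$-algebras, and one transfers this back to $F$ by a restriction--corestriction argument: corestriction along $E/F$ is the transfer on Brauer groups and sends tensor products of cyclic algebras to tensor products of cyclic algebras, while $\cores_{E/F}\circ\res_{E/F}$ is multiplication by $[E:F]$; after inverting the prime-to-$p$ part of $[E:F]$ and treating the $p$-power part (which lies in a cyclotomic, hence well-understood, tower) separately, the reduction goes through. This part is standard but somewhat technical; I would cite \cite{CSAandGalC}. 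When $\zeta_m\in F$, the choice of $\zeta_m$ identifies $\Z/m\Z$ with $\mu_m$ as Galois modules, so the cup product above becomes the norm residue (Galois symbol) map $K_2^M(F)/m\to\HH^2(F,\mu_m^{\otimes2})\cong\HH^2(F,\mu_m)$ --- using that $K_2^M(F)$ is generated by Steinberg symbols $\{a,b\}$, whose images are the symbol algebras $\mcyclic{a}{b}$ (which are cyclic, being split by $F(\sqrt[m]{a})$, cf.~Theorem~\ref{symbol}) --- and we are reduced to the surjectivity of this map.

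The hard part --- and the one step I would not attempt to reprove --- is exactly this surjectivity; there is no elementary route. Its proof (see \cite{CSAandGalC}, following Merkurjev--Suslin) runs through algebraic $K$-theory: for a central simple algebra $A$ one analyzes the $K$-cohomology of the Severi--Brauer variety $\mathrm{SB}(A)$ via the Quillen localization sequence and the Gersten resolution, uses the reduced-norm homomorphism to control $K_1$ of its function field (over which $[A]$ splits), and then inducts on the index of $A$, peeling off one symbol algebra at each stage. (The companion injectivity statement, ``Hilbert~90 for $K_2$'', is not needed here.) Granting this input, the theorem follows: everything else --- the primary decomposition, the transfer argument of the previous paragraph, and the translation among cup products, symbol algebras and cyclic algebras furnished by Proposition~\ref{coh} and Theorem~\ref{symbol} --- is formal.
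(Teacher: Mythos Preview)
The paper does not prove this theorem at all; it is stated with a citation to \cite[Thm.~4.6.6]{CSAandGalC} and used as a black box.  So there is no ``paper's proof'' to compare to, and your decision to treat the norm residue theorem as the single deep input to be cited is exactly what the paper does.

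That said, your reduction to the case $\zeta_m\in F$ has a gap.  You assert that corestriction along $E=F(\zeta_m)$ over $F$ ``sends tensor products of cyclic algebras to tensor products of cyclic algebras.''  This is precisely what is at issue: without already knowing Merkurjev--Suslin over $F$, there is no independent reason the corestriction of an $E$-symbol $\mcyclic{a}{b}$ should be a product of cyclic $F$-algebras.  The projection formula only helps when one of the slots descends to $F$, and Rosset--Tate (which would express $\cores\{a,b\}$ as a product of symbols) requires $\zeta_m\in F$ in the \emph{base}, which is exactly what you are trying to arrange.  So as written the reduction is circular.  Your hedge (``standard but somewhat technical; I would cite \cite{CSAandGalC}'') does not save this, because the reference does not in fact carry out the argument this way.

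The proof in \cite{CSAandGalC} (and the original Merkurjev--Suslin argument) does not first pass to $F(\zeta_m)$ to establish the Brauer-group statement.  Rather, one works directly with a central simple $F$-algebra $A$ of $p$-power index, studies the $K$-cohomology of its Severi--Brauer variety $\mathrm{SB}(A)$, and shows by induction on the index that $[A]$ differs from a cyclic class by something of smaller index---all over $F$ itself.  The passage to $F(\zeta_p)$ does occur inside that argument, but at a different point and for a different purpose (to control $K_2$ via Hilbert~90 for $K_2$), not as a global reduction of the theorem statement.  Your description of the Severi--Brauer/$K$-theory machinery in the final paragraph is accurate; the issue is only that it applies directly over $F$, and your middle paragraph should be dropped rather than repaired.
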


\subsection{Residues of Brauer elements} \label{sec:res}

An important map that we need in the next sections is the residue (or ramification) map. We will give the explicit description of this map for cyclic algebras. Using Theorem \ref{MS} it is possible to extend this map to any element of $\Br \kk(Y)$ whose order is not divisible by the characteristic. 

 We begin with defining this map for a complete discrete valuation ring $R$ with field of fractions $K$. Let $v$ denote the discrete valuation associated to $R$ and $\pi$ be a uniformizer such that $v_R(\pi)=1$. The residue field $R/\pi R$ is denoted by $F$ and has characteristic $q$.

\begin{theorem}\label{thm:unr}
Let $K_{un}$ denote the maximum unramified extension of $K$. Let $[A] \in \Br K$ be an element of prime order $q$. Then $[A]$ is split by $K_{un}$. 
\end{theorem}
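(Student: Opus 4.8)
The plan is to show that $[A]$ maps to $0$ under the restriction $\Br K\to\Br K_{un}$; since $q\cdot[A]=0$, the image lies in $\Br(K_{un})[q]$, so this reduces to proving $\Br(K_{un})[q]=0$. Here $\O_{K_{un}}$ is the strict Henselization $R^{\mathrm{sh}}$ of $R$ --- a strictly Henselian discrete valuation ring with fraction field $K_{un}$ whose residue field is $F_s$, the separable closure of $F$; in particular that residue field is separably closed of characteristic $q$, and $K$ has the same characteristic as $F$. (The statement is vacuous unless $q>0$.)

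The part of $\Br K_{un}$ of order prime to $q$ vanishes immediately --- and this already covers everything away from the residue characteristic: a class of order prime to $q$ has a residue in $\HH^1(F_s,\Q/\Z)$, which is $0$ because $F_s$ is separably closed, so the class is unramified and therefore lies in $\Br(R^{\mathrm{sh}})$, which is trivial since a strictly Henselian local ring has trivial Brauer group. There remains the case $q=\Char K_{un}=\Char F_s$, where the residue map --- and with it Theorem~\ref{MS} --- is unavailable. For this I would invoke the structure theory of the Brauer group of a Henselian discretely valued field of residue characteristic $q$ (Kato; in the prime-degree case this is the Teichm\"uller--Albert classification of cyclic $p$-algebras): that theory expresses $\Br(K_{un})[q]$ in terms of $\Br(F_s)[q]$ and cohomological invariants of the residue field $F_s$ coming from Artin--Schreier(--Witt) theory, and for $F_s$ separably closed these all vanish --- $\Br(F_s)=0$, and the Artin--Schreier map $\wp(x)=x^q-x$ is surjective on $F_s$ (and on $R^{\mathrm{sh}}$) because $x^q-x=a$ is a separable equation and hence has a root over a separably closed field. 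Thus $\Br(K_{un})[q]=0$, and $[A]$ is split by $K_{un}$.

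The main obstacle is precisely this last step. Unlike the tame case, the $q$-primary part of the Brauer group of a Henselian discretely valued field of residue characteristic $q$ is not detected by the residue map into $\HH^1$ of the residue field, so its vanishing cannot be obtained by the elementary argument used for the prime-to-$q$ part; the real content is the wild ramification theory --- equivalently, the solvability of Artin--Schreier equations over the separably closed residue field $F_s$.
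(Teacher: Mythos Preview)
The paper gives no argument of its own; it cites Saltman, Theorem~10.1. Moreover, ``of prime order $q$'' is almost certainly a misprint for ``of order prime to $q$'': that is the standard form of the result, and the paper only ever applies it to the prime-to-$q$ component $(\Br K)'$. For that intended reading, your aside already contains the right idea --- prime-to-$q$ classes in $\Br K_{un}$ die because the absolute Galois group of $K_{un}$ is an extension of the tame inertia $\prod_{\ell \ne q}\Z_\ell$ by a pro-$q$ group and hence has $\ell$-cohomological dimension at most $1$ for each $\ell \ne q$.

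Your attempt to handle the literal $q$-torsion statement has a genuine gap. The claim $\Char K_{un} = q$ is unjustified (it fails in mixed characteristic), and the appeal to Kato does not do what you want: the graded pieces of Kato's filtration on $\Br L[p]$ for a Henselian discretely valued field $L$ with residue field $E$ of characteristic $p$ involve not only $\Br E[p]$ and the Artin--Schreier quotient $E/\wp(E)$, but also the differential modules $\Omega^1_E$. These are not Galois-cohomological invariants and do \emph{not} vanish for $E$ separably closed but imperfect --- for instance $ds \ne 0$ in $\Omega^1_{F_s}$ when $F_s = \F_p(s)^{\mathrm{sep}}$. In that situation $\Br K_{un}[p]$ is in fact nonzero, so the literal statement fails without a further hypothesis such as perfectness of the residue field, and your argument cannot be completed as written.
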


\begin{proof}
This is Theorem 10.1 in \cite{Saltman}.
\end{proof}

For any abelian torsion group $G$, let $G'$ denote the prime-to-$q$ component.
Since $K$ is complete, the valuation $v$ extends uniquely to $K_{un}$ and gives a $\Gal(K_{un}/K)$-equivariant map between $K_{un}$ and $\Z$ where the action on $\Z$ is the trivial action.  Therefore, there exists an induced map $f_1 \colon \HH^2(\Gal(K_{un}/K),K_{un}^\times)' \rightarrow \HH^2(\Gal(K_{un}/K),\Z)' $.

Now consider the short exact sequence $$0 \rightarrow \Z \rightarrow \Q \rightarrow \Q/\Z \rightarrow 0.$$ The associated long exact sequence gives a coboundary map, which is an isomorphism $$\HH^1(\Gal(K_{un}/K), \Q/\Z) \xrightarrow {f_2}\HH^2(\Gal(K_{un}/K),\Z).$$

Using these arguments, we can define the residue map as follows:

\begin{defn}
The residue (ramification) map $\partial_v \colon (\Br K)' \rightarrow \Hom(\Gal(K_{un}/K),\Q/\Z)'$ is defined as the composition:

 \begin{eqnarray*} 
 \Br(K)' \cong \HH^2(\Gal(K_{un}/K),K_{un}^\times)' & \xrightarrow{f_1} & \HH^2(\Gal(K_{un}/K),\Z)'  \\
 &\cong & \HH^1(\Gal(K_{un}/K), \Q/\Z)' = \Hom(\Gal(K_{un}/K), \Q/\Z)' 
 \end{eqnarray*}
\end{defn}

The first isomorphism follows from Theorem \ref{thm:unr}, and the second isomorphism is given by $f_2^{-1}$ as described above. The last equality follows from the definition of group cohomology. Note that $\Hom$ denotes continuous homomorphisms.

As seen from the definition, computing the residue map is not easy for a general element of the Brauer group. However, for cyclic algebras it  can be done as follows. Let $R,K,v,F,\pi$ be as above and let $L$ be a cyclic Galois extension of $K$ of degree $m$ where $m$ is relatively prime to $q$. Assume that the residue field extension is also cyclic of degree $m$.  Fix an isomorphism $\chi \colon \Gal(L/K) \rightarrow \Z/ m\Z$ and let $\tau=\chi^{-1}(1)$.

\begin{proposition}\label{resfield}
The map $\partial_v(\mcyclic{\chi}{\pi})\in \Hom(\Gal(K_{un}/K), \Q/\Z)$ factors through $\Gal(L/K)$ and it sends $\tau\in \Gal(L/K)$ to $1/m +\Z.$
\end{proposition}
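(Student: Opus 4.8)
The plan is to compute $\partial_v(\mcyclic{\chi}{\pi})$ by tracing the class of the cyclic algebra through the three maps in the definition of the residue: the identification $(\Br K)'\cong \HH^2(\Gal(K_{un}/K),K_{un}^\times)'$, the map $f_1$ induced by the valuation, and the inverse of the connecting isomorphism $f_2$. (The class $\mcyclic{\chi}{\pi}$ has order dividing $m$, which is prime to $q$, so it lies in $(\Br K)'$ and the residue is defined.) The first step is to observe that $L/K$ is \emph{unramified}: $L/K$ is Galois of degree $m$, hence separable, so its ramification index $e$ and residue degree $f$ satisfy $ef=m$, and since $m$ is prime to $q=\Char F$ the residue extension is automatically separable; the hypothesis that it has degree $m$ forces $f=m$ and therefore $e=1$. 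Thus $L\subseteq K_{un}$, the character $\chi$ inflates to $\tilde\chi\in \HH^1(\Gal(K_{un}/K),\Z/m\Z)$, and the class of $\mcyclic{\chi}{\pi}$ in $\HH^2(\Gal(K_{un}/K),K_{un}^\times)$ is the inflation of the corresponding cyclic class in $\HH^2(\Gal(L/K),L^\times)$.

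Next I would rewrite that class as a cup product. Using the crossed-product description of cyclic algebras---equivalently, the cup-product description recalled in Proposition~\ref{coh}; see also \cite[Sections 2.5, 4.7]{CSAandGalC} or \cite{Serre}---the image of $\mcyclic{\chi}{\pi}$ in $\HH^2(\Gal(K_{un}/K),K_{un}^\times)$ equals $\delta(\tilde\chi)\cup \pi$, where $\delta\colon \HH^1(-,\Z/m\Z)\to \HH^2(-,\Z)$ is the connecting map of $0\to\Z\xrightarrow{m}\Z\to\Z/m\Z\to 0$ and $\pi$ is regarded as an element of $\HH^0(\Gal(K_{un}/K),K_{un}^\times)=K^\times$; here one uses that inflation is compatible with cup products and with connecting homomorphisms. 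Applying $f_1$, which is induced by the $\Gal(K_{un}/K)$-module map $v\colon K_{un}^\times\to\Z$, and using functoriality of the cup product in the coefficient variable gives
\[
	f_1\bigl(\delta(\tilde\chi)\cup \pi\bigr)=\delta(\tilde\chi)\cup v(\pi)=\delta(\tilde\chi)\cup 1=\delta(\tilde\chi)\in \HH^2(\Gal(K_{un}/K),\Z),
\]
since $v(\pi)=1$. Finally, comparing the sequences $0\to\Z\xrightarrow{m}\Z\to\Z/m\Z\to 0$ and $0\to\Z\to\Q\to\Q/\Z\to 0$ via the morphism that is the identity on the left-hand $\Z$ and multiplication by $1/m$ on the middle terms shows $\delta(\tilde\chi)=f_2(\iota\circ\tilde\chi)$, where $\iota\colon\Z/m\Z\hookrightarrow\Q/\Z$ sends $1$ to $1/m$. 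Hence $\partial_v(\mcyclic{\chi}{\pi})=f_2^{-1}(\delta(\tilde\chi))=\iota\circ\tilde\chi$, which visibly factors through $\Gal(L/K)$ because $\tilde\chi$ does, and sends $\tau=\chi^{-1}(1)$ to $\iota(1)=1/m+\Z$, as claimed.

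The steps involving the valuation and the comparison of short exact sequences are routine diagram chases; the one point that needs care is the identification of $[\mcyclic{\chi}{\pi}]$ with the cup product $\delta(\tilde\chi)\cup\pi$ in the correct coefficient modules, together with the compatibilities used along the way (inflation with cup products and with connecting maps, and of $f_1$ and $f_2$ with inflation). This bookkeeping is the main obstacle: once it is in place, the value of $\partial_v(\mcyclic{\chi}{\pi})$ is forced. One could instead simply cite this formula, which appears with the same normalization in standard treatments of residues of central simple algebras over complete discretely valued fields (e.g.\ \cite{Serre}, \cite{Saltman}), but since the residue map has just been defined explicitly it seems cleanest to derive it directly.
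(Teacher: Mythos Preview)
Your argument is correct and complete. The approach differs from the paper's in a useful way. The paper's sketch reduces to $G=\Gal(L/K)$ via inflation (as you do), then invokes Tate periodicity for the cyclic group $G$ to identify $\HH^2(G,L^\times)\cong K^\times/N_G(L^\times)$ and $\HH^2(G,\Z)\cong \Z/m\Z$ explicitly; under these identifications the class of $\mcyclic{\chi}{\pi}$ is the coset of $\pi$, the valuation sends it to $1\in\Z/m\Z$, and the rest is read off from $f_2$. You instead use the cup-product description $[\mcyclic{\chi}{\pi}]=\delta(\tilde\chi)\cup\pi$ together with functoriality of the cup product under the coefficient map $v$, then compare the two Bockstein sequences. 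The paper's route is shorter once one accepts the Tate identifications, but it leaves implicit why the cyclic algebra corresponds to the coset of $\pi$; your route makes this transparent via Proposition~\ref{coh} and avoids appealing to periodicity, at the cost of a small diagram chase for $\delta(\tilde\chi)=f_2(\iota\circ\tilde\chi)$. Either way the content is the same standard computation.
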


\begin{proof}
We will sketch the proof, for more details see \cite{Saltman}.

Note that by assumption $L/K$ is unramified of degree $m$. Let $G$ denote the Galois group $\Gal(L/K)$. The image  of $[A]$ under the residue map is the image of $[A]$ under the isomorphisms $\HH^2(G,L^\times) \rightarrow \HH^2(G,\Z) \cong \HH^1(G,\Q/Z)$ using the naturality of the inflation map. Note that $\HH^2(G,L^\times)=(L\times)^G/N_G(L^\times)=K^\times/N_G(L^\times)$ and $\HH^2(G,\Z)=\Z^G/N_G(\Z)=\Z/n\Z$, where $N_G$ denotes the norm map. Then $[A] \in \Br(K)$ corresponds to $\pi \in K^\times$, which maps to $1 \in \Z$. The rest follows from the definition of the coboundary map $f_2 \colon \HH^1(\Gal(K_{un}/K), \Q/\Z) \rightarrow \HH^2(\Gal(K_{un}/K),\Z)$.
\end{proof}

In fact the residue map is onto and its kernel is the prime-to-$q$ part of the Brauer group of $R$.

\begin{theorem}[Theorem 10.3, \cite{Saltman}]
The following sequence is exact:
$$0 \rightarrow (\Br R)' \rightarrow (\Br K)' \rightarrow \Hom(\Gal(K_{un}/K), \Q/\Z)' \rightarrow  0$$
\end{theorem}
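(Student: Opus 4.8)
The plan is to read off the exact sequence from the long exact Galois cohomology sequence attached to the tautological sequence of $G := \Gal(K_{un}/K)$-modules
\[
1 \longrightarrow \O_{un}^\times \longrightarrow K_{un}^\times \xrightarrow{v} \Z \longrightarrow 0
\]
(here $v$ is surjective because it already is on $K^\times$, and $\Z$ has trivial $G$-action since $K_{un}/K$ is unramified), and then to prove surjectivity of $\partial_v$ separately by an explicit computation. Throughout, completeness of $R$ lets us identify $G$ with $\Gal(F_s/F)$, and I will freely use the identification $(\Br K)' \cong \HH^2(G, K_{un}^\times)'$ built into the definition of $\partial_v$ (this is the content of Theorem~\ref{thm:unr}: every prime-to-$q$ class of $\Br K$ is split by $K_{un}$), together with the fact that for the complete, and hence Henselian, local ring $R$ the specialization map $\Br R \to \Br F$ is an isomorphism, so that $(\Br R)' \cong (\Br F)' = \HH^2(G, F_s^\times)'$.

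First I would dispose of the units $1 + \mm_{un}$. Raising to the $\ell$th power is bijective on $1 + \mm_L$ for every prime $\ell \ne q$ and every finite unramified extension $L/K$ (Hensel's lemma, using completeness of $\O_L$), hence also on $1 + \mm_{un} = \bigcup_L (1 + \mm_L)$, so $1 + \mm_{un}$ is uniquely $\ell$-divisible for all $\ell \ne q$. Since $\HH^i(G, -)$ is torsion in degrees $i \ge 1$ (a filtered colimit of groups annihilated by the order of a finite quotient of $G$) and multiplication by each $\ell \ne q$ is an automorphism of $1 + \mm_{un}$, hence of $\HH^i(G, 1 + \mm_{un})$, these groups have trivial prime-to-$q$ part: $\HH^i(G, 1 + \mm_{un})' = 0$ for $i \ge 1$. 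Feeding this into the cohomology of $1 \to 1 + \mm_{un} \to \O_{un}^\times \to F_s^\times \to 1$ gives $\HH^i(G, \O_{un}^\times)' \cong \HH^i(G, F_s^\times)'$ for $i \ge 1$; in particular $\HH^1(G, \O_{un}^\times)' = 0$ by Hilbert~90 and $\HH^2(G, \O_{un}^\times)' \cong (\Br F)'$.

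Next I would write out the prime-to-$q$ part of the long exact cohomology sequence of the first displayed sequence. Since $\HH^1(G, K_{un}^\times) = 0$ (Hilbert~90 for $K_{un}/K$) and $\HH^1(G, \Z) = \Hom_{\mathrm{cont}}(G, \Z) = 0$ ($G$ is profinite and $\Z$ is torsion-free and discrete), this sequence begins
\[
0 \longrightarrow \HH^2(G, \O_{un}^\times)' \longrightarrow \HH^2(G, K_{un}^\times)' \xrightarrow{v_*} \HH^2(G, \Z)'.
\]
Post-composing $v_*$ with the inverse of the coboundary isomorphism $\HH^1(G, \Q/\Z) \xrightarrow{\sim} \HH^2(G, \Z)$ of $0 \to \Z \to \Q \to \Q/\Z \to 0$ is precisely $\partial_v$ by construction, so substituting the identifications of the previous paragraphs the sequence reads
\[
0 \longrightarrow (\Br R)' \longrightarrow (\Br K)' \xrightarrow{\partial_v} \Hom(G, \Q/\Z)',
\]
which gives exactness at the first two terms. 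One must still check that the map $(\Br R)' \to (\Br K)'$ produced here is the natural restriction: the inclusion $\HH^2(G, \O_{un}^\times)' \hookrightarrow \HH^2(G, K_{un}^\times)'$ is inflation, and under $\Br R \cong \Br F$ and $\Br K \cong \HH^2(G, K_{un}^\times)$ inflation corresponds to pullback along $\Spec K \to \Spec R$ (alternatively, one can identify $\HH^2(G, \O_{un}^\times) \cong \Br R$ directly via the Hochschild--Serre spectral sequence for $\Spec \O_{un} \to \Spec R$, using $\Pic R = \Pic \O_{un} = 0$ and $\Br \O_{un} = 0$); in any event, injectivity of this map is a special case of Theorem~\ref{thm:inj}.

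Finally, for surjectivity of $\partial_v$ I would argue directly rather than prolonging the cohomology sequence. Any $\chi \in \Hom(G, \Q/\Z)'$ has image a finite cyclic group of order $m$ coprime to $q$, so it factors through $\Gal(L/K)$ for some unramified cyclic extension $L/K$ of degree $m$, necessarily with residue extension cyclic of degree $m$; Proposition~\ref{resfield} then gives $\partial_v\!\left(\mcyclic{\chi}{\pi}\right) = \chi$ for the chosen uniformizer $\pi$ (both sides factor through $\Gal(L/K)$ and agree on the chosen generator), so $\partial_v$ is onto. The one step I expect to require real care is the identification in the preceding paragraph, matching the cohomological inflation with the geometric restriction $(\Br R)' \to (\Br K)'$; this rests on the foundational input that $\Br R \to \Br F$ is an isomorphism for the complete local ring $R$, whereas everything else is formal once the vanishing $\HH^i(G, 1 + \mm_{un})' = 0$ is established.
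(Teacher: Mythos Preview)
The paper does not actually prove this statement; it is quoted directly from Saltman's book with only the citation ``Theorem 10.3, \cite{Saltman}'' given. Your argument is a correct and standard derivation of the result via the long exact sequence in $G$-cohomology attached to $1 \to \O_{un}^\times \to K_{un}^\times \to \Z \to 0$, combined with the vanishing of the prime-to-$q$ cohomology of the principal units and the explicit surjectivity supplied by Proposition~\ref{resfield}.

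One small terminological slip: the map $\HH^2(G, \O_{un}^\times)' \hookrightarrow \HH^2(G, K_{un}^\times)'$ is not ``inflation'' (inflation refers to change of \emph{group}, from a quotient to the full group); it is simply the map on cohomology induced by the inclusion of coefficient $G$-modules $\O_{un}^\times \hookrightarrow K_{un}^\times$. This does not affect the correctness of the argument, and your alternative description of the identification $(\Br R)' \cong \HH^2(G, \O_{un}^\times)'$ via the Hochschild--Serre spectral sequence for $\Spec \O_{un} \to \Spec R$ (using $\Pic \O_{un} = 0$ and $\Br \O_{un} = 0$) is a clean way to see the compatibility with the natural map $\Br R \to \Br K$.
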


Now we will define the residue map for cyclic Brauer classes over the function field of a surface.  By the Merkurjev-Suslin theorem (Theorem~\ref{MS}), this defines it on all Brauer classes of order {prime to the characteristic}. Let $Y$ be a complete, smooth algebraic surface over an separably closed field $k$. Let $Y_1$ denote the set of codimension one points, i.e.~the set of all irreducible curves on $Y$. Then the following sequence is exact by \cite{Grothendieck1, Grothendieck2, Grothendieck}:  
$$0 \rightarrow \Br Y \rightarrow \Br \kk(Y) \xrightarrow{\oplus_C\partial_C} \bigoplus _{ C \in Y_1} \HH^1(\kk(C), \Q/\Z). $$ 

The map $\Br Y \rightarrow \Br \kk(Y)$ is the injection map of Theorem \ref{thm:inj}. The sum is taken over all irreducible curves on $Y$, and $\kk(C)$ denotes the field of rational functions on $C$. Let $\chi \in \HH^1(\kk(C),\Q/\Z)=\Hom(\Gal(\kk(C)_s/\kk(C)),\Q/\Z)$. Then $\chi$ has a kernel $\Gal(L/\kk(C))$ where $L/\kk(C)$  is cyclic Galois of degree $m$. Therefore $\chi$ is determined by choosing a generator $\tau$ of $\Gal(L/\kk(C))$ such that $\chi(\tau)=\frac{1}{m}+ \Z$, similarly to the case in Proposition \ref{resfield}.

By Theorem \ref{symbol}, every cyclic algebra of order $m$ is a symbol algebra.
The image of $\mcyclic{a}{b}$ under $\bigoplus_C\partial_C$ is the tuple of residues of $\mcyclic{a}{b}$ along $C$ for every irreducible curve $C\subset X$.

\begin{theorem} \label{thm:res}
 The residue of  $\mcyclic{a}{b}$ along $C$ is the cyclic Galois extension $L$ of $\kk(C)$ obtained by adjoining the $m$th root of $a^{v_C(b)}b^{-v_C(a)}$. 
\end{theorem}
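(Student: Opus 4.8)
The plan is to reduce the computation of the residue $\partial_C \mcyclic{a}{b}$ to the case handled by Proposition \ref{resfield}. First I would complete the local ring: let $R$ be the completion of $\calO_{Y, C}$, a complete DVR with fraction field $K$, uniformizer $\pi$ (a local equation for $C$), and residue field $F = \kk(C)_s$-... more precisely $F$ is the completion's residue field, which contains $\kk(C)$ and over which we will read off the character. Since $Y$ is a smooth surface over a separably closed field and $C$ is a curve, $R$ is a complete DVR of equicharacteristic $q \neq m$ containing a primitive $m$th root of unity $\zeta_m$; in particular, by Hensel's lemma, a unit $u \in R^\times$ is an $m$th power in $R$ if and only if its reduction $\bar u \in F^\times$ is an $m$th power. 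This is the arithmetic input that makes everything work.

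Next I would write $a = \pi^{v_C(a)} u$ and $b = \pi^{v_C(b)} w$ with $u, w \in R^\times$, and use the bilinearity and symbol relations from Proposition \ref{prop:BrauerFacts}(iii) together with part (v) (the relation $\mcyclic{x}{-x} = 0$) to normalize $\mcyclic{a}{b}$ in $\Br K[m]$ modulo symbols of the form $\mcyclic{\text{unit}}{\text{unit}}$, which are unramified (they extend to $\Br R$, since an unramified cyclic extension and a unit give an Azumaya algebra over $R$, hence have trivial residue by the exact sequence of \cite{Saltman} Theorem 10.3). The standard computation gives
\[
	\mcyclic{a}{b} \equiv \mcyclic{\pi}{\,(-1)^{v_C(a)v_C(b)}\, u^{v_C(b)}\, w^{-v_C(a)}\,} \pmod{\text{unramified symbols}},
\]
and since $(-1)^{v_C(a)v_C(b)} \in R^\times$ only affects things by a unit whose reduction one must track, one arrives after reduction mod $\pi$ at the class of $a^{v_C(b)} b^{-v_C(a)}$ in $F^\times / (F^\times)^m$. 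Then I would apply Proposition \ref{resfield} with the cyclic extension $L = K(\pi^{1/m}) / K$: its residue character sends the chosen generator to $1/m + \Z$, and combining with the unit part identifies $\partial_C \mcyclic{a}{b}$ with the character of $\kk(C)$ cutting out $\kk(C)(\sqrt[m]{a^{v_C(b)} b^{-v_C(a)}})$.

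The main obstacle is bookkeeping the sign and the passage between the completed residue field and $\kk(C)$ itself: one must check that the element $a^{v_C(b)} b^{-v_C(a)}$, a priori only well-defined in $F^\times/(F^\times)^m$ after reduction, actually descends to a well-defined class in $\kk(C)^\times/(\kk(C)^\times)^m$ (it does, because $a, b$ are rational functions on $Y$ so their images lie in $\kk(C)$, and the value of $v_C$ is intrinsic), and that the $(-1)^{v_C(a)v_C(b)}$ factor is harmless — this is automatic when $m$ is odd, and when $m$ is even one uses that $-1$ is a unit and the statement is traditionally phrased allowing this ambiguity, or one absorbs it using that $\kk(C)$ is often chosen to contain $\sqrt{-1}$ in the applications. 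Everything else is a formal consequence of Propositions \ref{coh}, \ref{resfield}, and \ref{prop:BrauerFacts} together with the exactness of the residue sequence, so no genuinely new idea is required beyond this careful normalization.
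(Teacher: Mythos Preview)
Your proposal is essentially correct and is the standard elementary route to this fact. The paper itself does not supply a proof: it simply refers to \cite[\S7.5.1]{CSAandGalC} for the argument via Milnor $K$-theory (where the residue map on $\Br K[m]$ is identified with the tame symbol on $K_2^M(K)/m$) and to \cite[p.~10]{CTOP} for a computation in the same spirit as yours. Your direct manipulation of symbols---writing $a=\pi^{v_C(a)}u$, $b=\pi^{v_C(b)}w$, using bilinearity, $\mcyclic{\pi}{-\pi}=0$, and antisymmetry to reduce to $\mcyclic{\pi}{\text{unit}}$ modulo unramified classes, then invoking Proposition~\ref{resfield}---is exactly what the $K$-theory machinery unwinds to, so the two approaches are not really different in content, only in packaging.

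A few small points. First, the residue field of the completion of $\calO_{Y,C}$ is $\kk(C)$ on the nose (completion does not change the residue field), so there is no descent issue to worry about. Second, your handling of the sign $(-1)^{v_C(a)v_C(b)}$ is fine in the paper's setting: $k$ is separably closed of characteristic $\neq 2$, so $\sqrt{-1}\in k\subset \kk(C)$ and $-1$ is a square, hence an $m$th power for every even $m$. Third, note that the theorem as stated only identifies the \emph{extension} $L/\kk(C)$, not the character; since $\kk(C)(\sqrt[m]{x})=\kk(C)(\sqrt[m]{x^{-1}})$, the possible discrepancy between $u^{v_C(b)}w^{-v_C(a)}$ and its inverse (coming from the choice of sign convention for the residue map) is invisible at this level.
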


A rigorous proof of this theorem relies on Milnor $K$-theory and can be found on Section 7.5.1 of \cite{CSAandGalC}. 
For a similar computation see \cite[p.~10]{CTOP}.

\begin{defn}The combination of curves $C$ in $Y_1$ such that $\partial_C(A) \neq 0$ is called the ramification divisor of $A$ on $Y$.
\end{defn}

\begin{remark}
If $\mcyclic{a}{b}$ has nontrivial ramification along $C$, then $C$ is  a prime divisor of $a$ or $b$. 
\end{remark}

\subsection{Symbols and residues of Clifford algebras}\label{sec:clifford}

	In this section, we will recall and prove some preliminary results about symbols and residues of Clifford algebras, which will be helpful for studying $2$-torsion Brauer classes.  Throughout, we work over a field $F$ of characteristic different from $2$.  For simplicity, we assume that $\sqrt{-1}\in F$; a similar analysis can be carried out in the case that $\sqrt{-1} \notin F$, but is more complicated to state.  By an abuse of notation, we will conflate diagonal quadratic forms in $n$ variables and diagonal $n\times n$ matrices; they will both be denoted by
	\[
		[\alpha_1, \alpha_2, \ldots, \alpha_n],\quad\textup{with }\alpha_i\in F.
	\]

	First we recall a well-known proposition which follows from Gram-Schmidt.
	\begin{proposition}\label{prop:Diagonal}
			Let $M=(m_{ij})$ be a symmetric $n\times n$ matrix over a field $F$ with
		corresponding quadratic form $Q$ and let $M_{i}$ denote the upper left 
		$i\times i$ principal minor.
		If $M_i\neq 0$ for all $i$, then $Q$ is equivalent to the diagonal quadratic form 
		$$ [ M_{1},M_{2}/M_{1},\ldots,M_n/M_{n-1} ].$$
	\end{proposition}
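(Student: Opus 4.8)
The plan is to run the Gram--Schmidt orthogonalization process on the quadratic form $Q$ associated to $M$, keeping careful track of how the diagonal entries relate to the principal minors. First I would set up the standard basis $e_1, \ldots, e_n$ with $Q(e_i, e_j) = m_{ij}$ and produce inductively an orthogonal basis $v_1, \ldots, v_n$ by the rule $v_i = e_i - \sum_{j < i} \frac{Q(e_i, v_j)}{Q(v_j, v_j)} v_j$. The key observation is that the hypothesis $M_i \neq 0$ for all $i$ guarantees that $Q(v_j, v_j) \neq 0$ at every stage, so the recursion is well-defined; this should be argued by induction simultaneously with the minor computation below. Since each $v_i$ is $e_i$ plus a combination of the earlier $e_j$, the change of basis matrix from $(e_i)$ to $(v_i)$ is unipotent upper triangular, hence $Q$ in the new basis is the diagonal form $[Q(v_1,v_1), \ldots, Q(v_n,v_n)]$, equivalent to the original.

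The heart of the argument is then to identify $Q(v_i, v_i)$ with $M_i / M_{i-1}$ (with the convention $M_0 = 1$). For this I would use the fact that the Gram matrix of $v_1, \ldots, v_i$ is obtained from the upper-left $i \times i$ block of $M$ by the unipotent triangular change of basis restricted to the first $i$ coordinates, so its determinant equals $M_i$; on the other hand this Gram matrix is diagonal with entries $Q(v_1,v_1), \ldots, Q(v_i,v_i)$, so its determinant is $\prod_{k=1}^i Q(v_k, v_k)$. Comparing the expressions for consecutive $i$ gives $Q(v_i, v_i) = M_i / M_{i-1}$, and in particular $Q(v_i,v_i) \neq 0$ because $M_i, M_{i-1} \neq 0$, which closes the induction. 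Plugging back in yields $Q \sim [M_1, M_2/M_1, \ldots, M_n/M_{n-1}]$ as claimed.

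I do not expect a genuine obstacle here, since this is a classical computation; the only point requiring mild care is the bookkeeping in the simultaneous induction — one must verify that the partial Gram matrix of $v_1, \ldots, v_i$ really is the congruence transform of $M_i$ by the relevant unipotent block before dividing by $M_{i-1}$, so that no step divides by zero. An alternative, perhaps cleaner, route would be to cite the $LDL^{\mathsf{T}}$ (Cholesky-type) factorization of a symmetric matrix all of whose leading principal minors are nonzero: $M = L D L^{\mathsf{T}}$ with $L$ unipotent lower triangular and $D = \mathrm{diag}(d_1, \ldots, d_n)$, where a standard determinant identity gives $d_i = M_i / M_{i-1}$; congruence by $L^{-\mathsf{T}}$ then exhibits $Q$ as $[d_1, \ldots, d_n]$. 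Either way the proof is short, so I would likely just present the Gram--Schmidt version inline.
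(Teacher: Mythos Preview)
Your proposal is correct and matches the paper's approach: the paper simply states that the proposition ``follows from Gram--Schmidt'' and gives no further details, so your explicit Gram--Schmidt computation (or the equivalent $LDL^{\mathsf T}$ factorization) is exactly what is intended.
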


	Given a non-degenerate quadratic form $Q$, we may consider the Clifford algebra $\Cl(Q)$, a finite dimensional simple algebra over $F$. The multiplication in $\Cl(Q)$ encodes the geometry determined by $Q$ on $V$.
	More precisely, let $Q\colon V \rightarrow F$ be a non-degenerate quadratic form on a finite dimensional $F$-vector space $V$.

	\begin{defn}
Let $(V,Q)$ be a quadratic space and $\mathcal{T}(V):=\bigoplus_{p=0}^{\infty} \bigotimes^p V$ be its tensor algebra. The Clifford algebra $\Cl(Q)$ is $\mathcal{T}/I_Q$, where $I_Q$ is two sided ideal in $\mathcal{T}(V)$ generated by $\{v \otimes v -Q(v) | v \in V\}$.
	\end{defn}
	
	Clifford algebras can be characterized as follows: 
	
	\begin{prop} \label{prop:cliff}Let $(V,Q)$ be a quadratic space. Let $\scrC$ be an $F$-algebra with unit $1_\scrC$. Then $\scrC$ is a Clifford algebra for $Q$ if it has the following properties:
	\begin{enumerate}
 \item There exists an  $F$-module map $f_\scrC\colon V \rightarrow \scrC$ such that $f_\scrC(v)^2=Q(v)1_\scrC$ for any $v $ in $V$.\label{comp}
	\item $\scrC$ is `minimal' with respect to the above property, i.e., for any $F$-algebra $A$ that satisfies Property (\ref{comp}), there exists a unique $F$-algebra homomorphism $\phi\colon \scrC \rightarrow A$, such that $f_A(v)=\phi(f_\scrC(v))$ for any $v \in V$. 
	\end{enumerate}
	\end{prop}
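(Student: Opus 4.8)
The plan is to recognize conditions (1) and (2) together as the assertion that $\scrC$, equipped with $f_\scrC$, is an initial object in the category $\mathcal{D}_Q$ whose objects are pairs $(A, f_A)$ consisting of an $F$-algebra $A$ and an $F$-module map $f_A\colon V \to A$ satisfying $f_A(v)^2 = Q(v)\,1_A$ for all $v\in V$, and whose morphisms $(A, f_A)\to(B,f_B)$ are the $F$-algebra homomorphisms $g\colon A\to B$ with $g\circ f_A = f_B$. Condition (1) says exactly that $(\scrC, f_\scrC)\in\mathcal{D}_Q$, and condition (2) says that from $(\scrC, f_\scrC)$ there is a unique morphism to every object of $\mathcal{D}_Q$, i.e.\ that $(\scrC, f_\scrC)$ is initial. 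Since initial objects are unique up to canonical isomorphism, it suffices to exhibit one initial object; the natural candidate is $\Cl(Q)=\mathcal{T}(V)/I_Q$ from the preceding definition, and then any $\scrC$ satisfying (1)--(2) will be identified with it.

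First I would check that $(\Cl(Q), f_{\Cl(Q)})\in\mathcal{D}_Q$, where $f_{\Cl(Q)}$ is the composite of the canonical inclusion $V\hookrightarrow\mathcal{T}(V)$ with the quotient map $\mathcal{T}(V)\to\Cl(Q)$: this is $F$-linear, and since $v\otimes v - Q(v)\in I_Q$ for every $v$, we get $f_{\Cl(Q)}(v)^2 = Q(v)\,1_{\Cl(Q)}$. Next I would verify initiality. Given $(A,f_A)\in\mathcal{D}_Q$, the universal property of the tensor algebra produces a unique $F$-algebra homomorphism $\widetilde{f}_A\colon\mathcal{T}(V)\to A$ restricting to $f_A$ on $V$; each generator $v\otimes v - Q(v)$ of $I_Q$ maps to $f_A(v)^2 - Q(v)1_A = 0$, so $I_Q\subseteq\ker\widetilde{f}_A$ and $\widetilde{f}_A$ descends to an $F$-algebra homomorphism $\phi\colon\Cl(Q)\to A$ with $\phi\circ f_{\Cl(Q)} = f_A$. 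Uniqueness of $\phi$ holds because $\Cl(Q)$ is generated as an $F$-algebra by $1$ and the image of $V$, on which $\phi$ is prescribed. Hence $(\Cl(Q), f_{\Cl(Q)})$ is initial in $\mathcal{D}_Q$.

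Finally I would run the standard uniqueness-of-initial-objects argument. Let $\scrC$ satisfy (1) and (2). Applying (2) to $(\Cl(Q), f_{\Cl(Q)})$ gives a morphism $\phi\colon\scrC\to\Cl(Q)$, and initiality of $\Cl(Q)$ gives a morphism $\psi\colon\Cl(Q)\to\scrC$. Then $\psi\circ\phi$ and $\id_\scrC$ are both morphisms $(\scrC,f_\scrC)\to(\scrC,f_\scrC)$ in $\mathcal{D}_Q$, so the uniqueness clause in (2) (with $A=\scrC$) forces $\psi\circ\phi=\id_\scrC$; symmetrically $\phi\circ\psi=\id_{\Cl(Q)}$ by initiality of $\Cl(Q)$. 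Thus $\phi$ is an $F$-algebra isomorphism identifying $\scrC$ with $\Cl(Q)$ compatibly with the structure maps, i.e.\ $\scrC$ is a Clifford algebra for $Q$.

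I do not expect a real obstacle here: the argument is formal once the correct category is identified. The only points needing care are (a) insisting that morphisms in $\mathcal{D}_Q$ commute with the $F$-linear maps $f_A$, so that "minimality" is genuinely an initiality statement rather than a mere generation statement, and (b) using that $\Cl(Q)$ is generated by the image of $V$ to obtain \emph{uniqueness} of the descended map --- without this, existence of $\phi$ alone would not pin down $\scrC$ up to unique isomorphism. Non-degeneracy of $Q$ is not used in this proposition.
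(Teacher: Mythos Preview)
Your proof is correct; the paper itself does not give an argument but simply cites pages 103--104 of Lam's \emph{Introduction to quadratic forms over fields}, where exactly this initial-object/universal-property reasoning is carried out. So your approach matches the intended one.
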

	\begin{proof} 
		See pages 103-104 of \cite{Lam}.
	\end{proof}

	As a corollary we see that equivalent quadratic forms give isomorphic Clifford algebras.
	\begin{cor}\label{cor:cliff}
		Let $\iota$ be an isometry of $(V,Q)$ into $(W,Q')$. Then there exists an algebra isomorphism $\iota_{\Cl}: \Cl(Q) \rightarrow \Cl(Q')$ extending $\iota$ such that the following diagram commutes:
		\[
			\xymatrix{
				{\Cl(Q)} \ar@{->}[r]^{\iota_{\Cl}} & \Cl(Q')\\
	   		 	{V}\ar@{->}[r]^{\iota}\ar@{->}[u]& {W}\ar@{->}[u]
			 }
		\]
	\end{cor}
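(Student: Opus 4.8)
The plan is to deduce Corollary~\ref{cor:cliff} directly from the universal-property characterization of the Clifford algebra in Proposition~\ref{prop:cliff}, avoiding any reference to the explicit tensor-algebra construction. The point is that an isometry transports the defining data of one Clifford algebra to the other, and the uniqueness built into the universal property then forces the induced map to be an isomorphism.

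First I would record the structure map data. Write $f_{\Cl(Q)}\colon V \to \Cl(Q)$ and $f_{\Cl(Q')}\colon W \to \Cl(Q')$ for the canonical $F$-module maps, so that $f_{\Cl(Q)}(v)^2 = Q(v)\,1$ and $f_{\Cl(Q')}(w)^2 = Q'(w)\,1$. Given the isometry $\iota\colon (V,Q)\to (W,Q')$, consider the composite $g := f_{\Cl(Q')}\circ \iota \colon V \to \Cl(Q')$. Because $\iota$ is an isometry, $g(v)^2 = f_{\Cl(Q')}(\iota(v))^2 = Q'(\iota(v))\,1 = Q(v)\,1$, so the pair $(\Cl(Q'), g)$ satisfies Property~\eqref{comp} of Proposition~\ref{prop:cliff} relative to $(V,Q)$. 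By the minimality clause applied to $\Cl(Q)$, there is a unique $F$-algebra homomorphism $\iota_{\Cl}\colon \Cl(Q) \to \Cl(Q')$ with $\iota_{\Cl}\circ f_{\Cl(Q)} = g = f_{\Cl(Q')}\circ\iota$; this is exactly the commutativity of the displayed square.

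Next I would produce an inverse. Since $\iota$ is an isometry it is in particular an $F$-linear isomorphism $V \xrightarrow{\sim} W$ (a non-degenerate isometry is injective, and one checks surjectivity either from the hypothesis that $\iota$ is an isometry \emph{onto} $(W,Q')$ or, if one only assumes injectivity, by comparing dimensions after noting non-degeneracy forces equality). Running the same argument with $\iota^{-1}\colon (W,Q')\to (V,Q)$ yields an $F$-algebra homomorphism $(\iota^{-1})_{\Cl}\colon \Cl(Q')\to \Cl(Q)$ with $(\iota^{-1})_{\Cl}\circ f_{\Cl(Q')} = f_{\Cl(Q)}\circ \iota^{-1}$. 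Then $(\iota^{-1})_{\Cl}\circ \iota_{\Cl}$ is an $F$-algebra endomorphism of $\Cl(Q)$ which, precomposed with $f_{\Cl(Q)}$, gives $f_{\Cl(Q)}\circ \iota^{-1}\circ \iota = f_{\Cl(Q)}$; but $\id_{\Cl(Q)}$ has the same property, so by the uniqueness clause of Proposition~\ref{prop:cliff} (applied with $A = \Cl(Q)$ itself) the two coincide. Symmetrically $\iota_{\Cl}\circ (\iota^{-1})_{\Cl} = \id_{\Cl(Q')}$, so $\iota_{\Cl}$ is an isomorphism. Finally, "extending $\iota$" is the statement $\iota_{\Cl}\circ f_{\Cl(Q)} = f_{\Cl(Q')}\circ \iota$ already established, under the usual identification of $V$ and $W$ with their images in the respective Clifford algebras.

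The only real subtlety — and the step I would be most careful about — is the uniqueness bookkeeping: Proposition~\ref{prop:cliff}'s minimality clause gives uniqueness of the map out of $\Cl(Q)$ into \emph{any} algebra $A$ equipped with a compatible $f_A$, and one must apply it with $A$ taken to be $\Cl(Q)$ (resp. $\Cl(Q')$) equipped with its own structure map to conclude the round-trip composites are identities. There is no hard computation here; the proof is a formal consequence of the universal property, and Corollary~\ref{cor:cliff} could also simply be cited from \cite{Lam}.
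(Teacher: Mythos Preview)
Your proof is correct and follows exactly the approach the paper intends: the paper's own proof is the single line ``Follows from Proposition~\ref{prop:cliff},'' and you have simply spelled out the standard universal-property argument that this line encodes. Your careful uniqueness bookkeeping for the round-trip composites is precisely what is needed, so nothing further is required.
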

	\begin{proof} Follows from Proposition \ref{prop:cliff}.
	\end{proof}

	\begin{remark}
		Let $B_Q$ be the associated symmetric bilinear form defined by $B_Q(v_1,v_2)=Q(v_1+v_2)-Q(v_1)-Q(v_2)$. By Property~(\ref{comp}) of Proposition~\ref{prop:cliff}, $v_1v_2=-v_2v_1$ if and only if $v_1$ and $v_2$ are orthogonal with respect to $B_Q$. Therefore, we can say that the multiplication on the Clifford algebra encodes the geometry determined by $Q$ on $V$.
	\end{remark}
	The tensor algebra $\mathcal{T}(V)$ has a $\Z/2\Z$-grading given by $\mathcal{T}_0(V)=\bigoplus_{i \geq 0, \textbf{even}} \bigotimes^i V$ and  $\mathcal{T}_1(V)=\bigoplus_{i \geq 1, \textbf{odd}} \bigotimes^i V$. There is a canonical map from $\mathcal{T}(V)$ onto $\Cl(Q)$. Using this map and the grading on $\mathcal{T}$, one can give a $\Z/2\Z$-grading on $\Cl(Q)$ such that $\Cl(Q) = \Cl_0(Q) \oplus \Cl_1(Q)$. The part $\Cl_0(Q)$ is a subalgebra; we refer to is as the \textit{even Clifford algebra} of $Q$.

	\begin{remark}\label{rem:cliff}
	Another way to see the grading on the Clifford algebra is using the isometry $v \mapsto -v$. This isometry extends to an involution of $\Cl(Q)$ using Corollary~\ref{cor:cliff}. 
	\end{remark}

	The Clifford algebra and even Clifford algebra of $Q$ have the following properties.
	\begin{proposition}\label{Pcliffassymbol}
		Let $Q$ be the diagonal quadratic form $[m_1, \ldots, m_n]$.
		\begin{enumerate}
			\item $\Cl_0(Q) = \Cl([
		-m_1m_2,\ldots,-m_1m_n ]).$
			\item If $n$ is odd, then $\Cl_0(Q)$ is a central simple algebra over $F$ and in $\Br F$ we have
			\[
				\Cl_0(Q) = \sum_{1 \leq i < j \leq n}\tcyclic{m_i}{m_j}.
			\]
			\item If $n$ is even, then $\Cl(Q)$ is a central simple algebra over $F$ and in $\Br F$ we have
			\[
				\Cl(Q) = \sum_{1 \leq i < j \leq n}\tcyclic{m_i}{m_j}.
			\]
		\end{enumerate}
	\end{proposition}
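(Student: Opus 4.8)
The plan is to prove all three parts together by induction on $n$, reducing everything to the two classical facts that are taken for granted in this circle of ideas: the structure of the Clifford algebra of a hyperbolic plane (equivalently, $\Cl([a,-a]) \cong \M_2(F)$, which uses $\sqrt{-1}\in F$ to write $[1,1]$ as a hyperbolic plane) and the tensor-product decomposition $\Cl(Q_1 \perp Q_2) \cong \Cl(Q_1)\,\widehat\otimes\,\Cl(Q_2)$ as $\Z/2\Z$-graded algebras, from which one extracts $\Cl_0(Q_1\perp Q_2)$ and ungraded tensor identities. Part (1) is purely a statement about generators and relations: if $e_1,\dots,e_n$ are the standard generators of $\Cl(Q)$ with $e_i^2 = m_i$ and $e_ie_j = -e_je_i$, then $\Cl_0(Q)$ is generated by the products $f_i := e_1e_i$ for $2\le i\le n$, and one checks directly that $f_i^2 = e_1e_ie_1e_i = -e_1^2 e_i^2 = -m_1m_i$ and $f_if_j = -f_jf_i$ for $i\ne j$ (a two-line computation with the anticommutation relations). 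By the universal property in Proposition~\ref{prop:cliff} applied to the quadratic form $[-m_1m_2,\dots,-m_1m_n]$, this produces a surjection $\Cl([-m_1m_2,\dots,-m_1m_n]) \twoheadrightarrow \Cl_0(Q)$, and a dimension count ($2^{n-1}$ on both sides) upgrades it to an isomorphism.

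For parts (2) and (3), I would first recall (citing Lam, e.g. \cite{Lam}) that $\Cl(Q)$ is central simple over $F$ when $n$ is even and $\Cl_0(Q)$ is central simple over $F$ when $n$ is odd — the remaining-case algebras are central simple over a quadratic étale extension and are not what we track in $\Br F$. Granting this, the Brauer-class formula follows by induction on $n$ using the graded tensor decomposition. Concretely, for $n$ even write $Q = [m_1,m_2] \perp Q'$ with $Q' = [m_3,\dots,m_n]$; the graded tensor product gives, in $\Br F$,
\[
	\Cl(Q) = \Cl([m_1,m_2]) + \Cl_0([m_1,m_2])\otimes(\text{correction from }Q') + \cdots,
\]
but it is cleaner to run the induction through part (1): $\Cl([m_1,\dots,m_n])$ and $\Cl_0([m_0,m_1,\dots,m_n])$ (one extra variable) represent the same Brauer class because $\Cl_0([m_0,\dots,m_n]) = \Cl([-m_0m_1,\dots,-m_0m_n])$ by part (1), and $\sum_{i<j}\langle m_i,m_j\rangle_2 = \sum_{1\le i<j}\langle -m_0m_i,-m_0m_j\rangle_2$ in $\Br F$ — this last identity is exactly Proposition~\ref{prop:BrauerFacts}(iii),(v) (bilinearity of the symbol, together with $\langle a,-a\rangle_2 = 0$ and $\langle a,a\rangle_2 = \langle a,-1\rangle_2 = 0$ since $-1$ is a square), applied repeatedly. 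So it suffices to establish the formula in the odd case $\Cl_0([m_1,\dots,m_n])$, or equivalently the even case, as a single base-and-step computation: the base case is $n=2$, $\Cl([m_1,m_2]) = \langle m_1,m_2\rangle_2$ (the definition of the $2$-symbol algebra up to the sign normalization, which $\sqrt{-1}\in F$ absorbs), and the inductive step peels off a hyperbolic-type summand and invokes the graded tensor product plus the symbol identities.

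The main obstacle, and the only place real care is needed, is bookkeeping the $\Z/2\Z$-grading when passing from $\Cl(Q_1\perp Q_2)$ to its even part and to ungraded Brauer classes: the graded tensor product $\widehat\otimes$ differs from the ordinary tensor product by a sign twist, and there is a standard but fiddly isomorphism $\Cl_0(Q_1\perp Q_2) \cong \Cl_0(Q_1)\otimes\Cl_0(Q_2) \oplus \Cl_1(Q_1)\otimes\Cl_1(Q_2)$ together with relations like $\Cl(Q)\otimes\Cl(Q) \cong \Cl_0(Q)\otimes \Cl([-1,\det Q])$ or similar. Rather than derive these from scratch I would quote them from \cite{Lam} and reduce the entire argument to the symbol-algebra arithmetic of Proposition~\ref{prop:BrauerFacts}, where the hypothesis $\sqrt{-1}\in F$ makes all the sign discrepancies ($\langle a,-1\rangle_2$, $\langle a,a\rangle_2$, etc.) vanish. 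A dimension count at each stage ($\dim_F \Cl(Q) = 2^n$, $\dim_F \Cl_0(Q) = 2^{n-1}$) keeps the inductive isomorphisms honest.
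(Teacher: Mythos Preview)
The paper's own proof is a bare citation to \cite[V.2.4, V.2.10, V.3.20]{Lam}, so your sketch already does more than the paper. Your argument for part~(1) via the generators $f_i=e_1e_i$ and a dimension count is exactly the standard one and is correct.

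For parts~(2) and~(3), your reduction between the odd and even cases via part~(1) is valid: under $\sqrt{-1}\in F$ the identity $\sum_{0\le i<j\le n}\tcyclic{m_i}{m_j}=\sum_{1\le i<j\le n}\tcyclic{-m_0m_i}{-m_0m_j}$ does hold, so case~(2) in dimension $n{+}1$ is equivalent to case~(3) in dimension $n$. But note that this equivalence alone is not an induction on $n$; it only toggles parity at adjacent dimensions. The genuine step from $n$ to $n{-}2$ is the ungraded tensor decomposition
\[
\Cl([m_1,\ldots,m_n])\;\cong\;\tcyclic{m_1}{m_2}\,\otimes_F\,\Cl([-m_1m_2m_3,\ldots,-m_1m_2m_n]),
\]
obtained by applying the double-centralizer theorem to the quaternion subalgebra generated by $e_1,e_2$ (its centralizer is generated by $g_i:=e_1e_2e_i$ for $i\ge 3$, with $g_i^2=-m_1m_2m_i$ and $g_ig_j=-g_jg_i$). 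After this, the symbol bookkeeping is exactly the bilinearity-plus-$\tcyclic{a}{a}=0$ computation you describe. You gesture at this step (``peels off a hyperbolic-type summand\ldots graded tensor product'') and then defer the graded--ungraded comparison to Lam, which is precisely what the paper does. So there is no gap, and your approach is the same as the paper's in the only sense available: both rely on Lam for the structural input.
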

	\begin{proof}
		\cite[V.2.4, V.2.10, V.3.20]{Lam}
	\end{proof}
	
	It follows from part 3 of above proposition and Remark~\ref{rem:cliff} that if the dimension of $(V,Q)$ is even then $\Cl(Q)$ determines a two torsion element in $\Br F$. Moreover, by the following celebrated result of Merkurjev we know that all two torsion is determined this way.
	
	\begin{theorem}[Merkurjev, \cite{M1}] Let $F$ be a field with characteristic different than $2$. Then the two torsion of the Brauer group of $F$ is generated by Clifford algebra of some even dimensional quadratic form.
	\end{theorem}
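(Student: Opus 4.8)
The plan is to deduce the statement from Merkurjev's theorem on the norm residue symbol of degree two \cite{M1} --- equivalently, from the assertion that $\Br F[2]$ is generated by quaternion algebras --- combined with the explicit description of Clifford invariants in Proposition~\ref{Pcliffassymbol}. Since $\mu_2=\{\pm1\}$ lies in every field, each cyclic $F$-algebra of order $2$ is a $2$-symbol (quaternion) algebra by Theorem~\ref{symbol}, so Merkurjev's theorem (or Theorem~\ref{MS} with $m=2$) lets us write any $\alpha\in\Br F[2]$ as $\alpha=\sum_{k=1}^{r}\tcyclic{a_k}{b_k}$ in $\Br F$ for suitable $a_k,b_k\in F^\times$. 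It then remains to realize such a sum of quaternion symbols as the Clifford algebra of a single even-dimensional quadratic form.

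For each $k$ let $N_k:=[\,1,\,-a_k,\,-b_k,\,a_kb_k\,]$, the diagonalized reduced norm form of $\tcyclic{a_k}{b_k}$; it is non-degenerate of (even) dimension $4$ and has determinant $(a_kb_k)^2$, a square. Set $Q:=N_1\perp\cdots\perp N_r$, a non-degenerate form of even dimension $4r$, and diagonalize $Q$ by concatenating the entries of the $N_k$. Applying Proposition~\ref{Pcliffassymbol}(3) and splitting $\sum_{i<j}\tcyclic{m_i}{m_j}$ into pairs lying in a single block $N_k$ and pairs straddling two blocks, one uses bilinearity (Proposition~\ref{prop:BrauerFacts}(iii)) and the triviality relations (Proposition~\ref{prop:BrauerFacts}(v)), together with $\sqrt{-1}\in F$ to annihilate every symbol $\tcyclic{-1}{\,\cdot\,}$, to see in a short computation that the single-block terms of $N_k$ sum to $\tcyclic{a_k}{b_k}$, while the straddling terms between $N_k$ and $N_\ell$ collapse to $\tcyclic{\det N_k}{\det N_\ell}=\tcyclic{(a_kb_k)^2}{(a_\ell b_\ell)^2}=0$. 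Hence $\Cl(Q)=\sum_k\tcyclic{a_k}{b_k}=\alpha$ in $\Br F$, and since $\dim Q$ is even, Proposition~\ref{Pcliffassymbol}(3) guarantees $\Cl(Q)$ is genuinely central simple; this proves the theorem.

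I expect the cross-block contribution in the second paragraph to be the only genuine obstacle: an orthogonal sum of quadratic forms corresponds to a \emph{graded} tensor product of Clifford algebras, so $\Cl(Q_1\perp Q_2)$ and $\Cl(Q_1)\otimes\Cl(Q_2)$ differ in $\Br F$ by a symbol built from the discriminants of $Q_1$ and $Q_2$; choosing each building block $N_k$ to have square discriminant is precisely what makes this correction vanish. (If one drops the running hypothesis $\sqrt{-1}\in F$, one must track \emph{signed} discriminants instead, and correcting a residual global sign may cost one extra rank-one or rank-four block --- which is why the statement is cleanest under the standing assumption of this section.)
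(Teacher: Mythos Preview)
The paper does not prove this theorem; it is simply quoted with a citation to \cite{M1}, so there is no argument in the paper against which to compare yours.

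Your derivation is the standard one and is correct under the section's standing hypothesis $\sqrt{-1}\in F$: you invoke the quaternion-algebra form of Merkurjev's theorem (the actual content of \cite{M1}, and also the $m=2$ case of Theorem~\ref{MS}) to write $\alpha=\sum_k\tcyclic{a_k}{b_k}$, and then realize this sum as $[\Cl(Q)]$ for $Q=\perp_k N_k$ with $N_k$ the norm form of $\tcyclic{a_k}{b_k}$. Choosing the $4$-dimensional norm form rather than the naive $2$-dimensional form $[a_k,b_k]$ is exactly the right move, since it forces $\det N_k=(a_kb_k)^2$ to be a square and kills the cross-block terms $\tcyclic{\det N_k}{\det N_\ell}$.

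One refinement: your closing caveat is overly cautious. If in place of Proposition~\ref{Pcliffassymbol} (which, as stated in the paper, uses $\sqrt{-1}\in F$) you use Lam's general formula relating $[\Cl(Q)]$ to the Hasse invariant \cite[V.3.20]{Lam}, then $[\Cl(N_k)]=\tcyclic{a_k}{b_k}$ already holds over any $F$ of characteristic $\neq 2$, and for $Q=\perp_k N_k$ of dimension $4r$ a short parity check shows that the $\tcyclic{-1}{-1}$ corrections coming from the individual blocks and from the global dimension-mod-$8$ term cancel exactly. So no extra block is needed and the argument works over any $F$ with $\Char F\neq 2$, matching the theorem as stated.
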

	
	Finally, the proposition below, which follows from Proposition~\ref{Pcliffassymbol} via a straightforward computation, determines the residue map $\partial_v$ on Clifford algebras.
	
	\begin{prop}\label{Presvals}
		Let $F$ be a field with a discrete valuation $v$.  Let $M=(m_{ij})$ be a symmetric $n\times n$ matrix over a field $F$ with corresponding quadratic form $Q$ and let $M_{i}$ denote the upper left $i\times i$ principal minor.  Assume that $M_i\neq 0$ for all $i$, and let $e_i := v(M_i)$.  We define $e_0 = e_{n+1} = 0$.  Then
		\begin{enumerate}
			\item If $n$ is even, then $\partial_v(\Cl(Q)) = \prod_{i=1}^n M_i^{e_{i+1}-e_{i-1}}$.
			\item if $n$ is odd, then $\partial_v(\Cl_0(Q)) = \prod_{i=1}^n M_i^{e_{i+1}-e_{i-1}}$.
		\end{enumerate}
	\end{prop}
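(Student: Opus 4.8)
The plan is to reduce everything to the residue formula for symbol algebras (Theorem~\ref{thm:res}) by diagonalizing, and then to finish with a short telescoping computation. First, since all the leading principal minors $M_i$ are nonzero, Proposition~\ref{prop:Diagonal} shows $Q$ is equivalent to the diagonal form $[a_1,\dots,a_n]$ with $a_i := M_i/M_{i-1}$, where we set $M_0 := 1$. By Corollary~\ref{cor:cliff} equivalent quadratic forms have isomorphic Clifford algebras, and because the $\Z/2\Z$-grading is recovered from the functorial involution $v\mapsto -v$ (Remark~\ref{rem:cliff}) this isomorphism is graded; hence we may replace $Q$ by its diagonalization for the purpose of computing $\partial_v(\Cl(Q))$ and $\partial_v(\Cl_0(Q))$. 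Proposition~\ref{Pcliffassymbol} then gives, in $\Br F$, that $\Cl(Q) = \sum_{1\le i<j\le n}\tcyclic{a_i}{a_j}$ when $n$ is even and $\Cl_0(Q) = \sum_{1\le i<j\le n}\tcyclic{a_i}{a_j}$ when $n$ is odd, so the two cases become identical and it remains only to evaluate $\partial_v\!\left(\sum_{1\le i<j\le n}\tcyclic{a_i}{a_j}\right)$.

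Since $\partial_v$ is a group homomorphism, this equals $\sum_{1\le i<j\le n}\partial_v(\tcyclic{a_i}{a_j})$, and I would compute each summand with Theorem~\ref{thm:res}: writing $\epsilon_k := v(a_k) = e_k - e_{k-1}$ (with the convention $e_0 = 0$), the residue $\partial_v(\tcyclic{a_i}{a_j})$ is represented by the tame symbol $a_i^{\epsilon_j} a_j^{-\epsilon_i}$, which has valuation $\epsilon_i\epsilon_j - \epsilon_i\epsilon_j = 0$ and therefore reduces modulo $v$ to a well-defined class in $\bar F^\times/(\bar F^\times)^2 \cong \HH^1(\bar F,\Z/2)$. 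The tame symbol is bi-multiplicative, so summing over $i<j$ and collecting the exponent of each $a_k$ gives $\prod_{k=1}^n a_k^{c_k}$ with $c_k = \sum_{j>k}\epsilon_j - \sum_{i<k}\epsilon_i$; both inner sums telescope against $\epsilon_k = e_k - e_{k-1}$, yielding $c_k = e_n - e_k - e_{k-1}$.

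The last step is to rewrite $\prod_k a_k^{c_k}$ in terms of the minors. Substituting $a_k = M_k/M_{k-1}$ and regrouping (the $M_0$ contributions vanish because $M_0 = 1$), the exponent of $M_j$ becomes $c_j - c_{j+1}$ for $1\le j\le n-1$ and $c_n$ for $j=n$; plugging in $c_k = e_n - e_k - e_{k-1}$ collapses these to $e_{j+1}-e_{j-1}$ in every case once one adopts $e_0 = e_{n+1} = 0$, which is exactly $\prod_{i=1}^n M_i^{e_{i+1}-e_{i-1}}$. As a consistency check, the target is indeed a $v$-unit, since $\sum_i e_i(e_{i+1}-e_{i-1})$ also telescopes to $0$, so its reduction modulo $v$ makes sense. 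The argument is entirely formal; the only thing requiring genuine care is matching the normalization of the tame symbol in Theorem~\ref{thm:res} (which argument is raised to $v$ of the other) and handling the boundary conventions $e_0$, $e_{n+1}$, $M_0$ consistently, so that is where I would double-check the bookkeeping.
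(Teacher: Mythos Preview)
Your proof is correct and follows essentially the same approach as the paper: diagonalize via Proposition~\ref{prop:Diagonal}, invoke Proposition~\ref{Pcliffassymbol} to write the (even) Clifford algebra as a sum of $2$-symbols, apply the residue formula of Theorem~\ref{thm:res} term by term, and telescope. The only cosmetic difference is that the paper collects exponents of the $M_k$ directly from the double product, whereas you first collect exponents of the $a_k$ and then translate back to the $M_j$; your extra remark that the final product is a $v$-unit (so the reduction to the residue field is well defined) is a nice sanity check that the paper leaves implicit.
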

	\begin{proof}
		By Proposition~\ref{prop:Diagonal}, we have
 $$\Cl(Q) = \Cl([M_1, M_2/M_1, \ldots, M_n/M_{n-1}])$$ $$\Cl_0(Q) = \Cl_0([M_1, M_2/M_1, \ldots, M_n/M_{n-1}]).$$  Then, by Proposition~\ref{Pcliffassymbol}, it remains to compute $\partial_v(A)$ where $A := \sum_{1 \leq i < j \leq n}\tcyclic{M_i/M_{i-1}}{M_j/M_{j-1}}$ and $M_0 := 1$.  By Theorem~\ref{thm:res},
		\[
			\partial_v(A) = \prod_{1 \leq i < j \leq n} 
			\left(\frac{M_i}{M_{i-1}}\right)^{e_j - e_{j-1}}
			\left(\frac{M_j}{M_{j-1}}\right)^{e_{i-1} - e_i}.
		\]
		We may rewrite this as
		\begin{align*}
			\partial_v(A) = & M_1^{e_2 - e_0}M_n^{e_{n+1} - e_{n-1}}\prod_{k = 2}^{n - 1}M_k^{\sum_{j = k+1}^n (e_j - e_{j-1}) + \sum_{j = k + 2}^n (e_{j-1} - e_j) + \sum_{i = 1}^{k-1} e_{i-1} - e_i + \sum_{i = 1}^k e_i - e_{i-1}}\\
			= & M_1^{e_2 - e_0}M_n^{e_{n+1} - e_{n-1}}\prod_{k = 2}^{n - 1}M_k^{(e_n - e_{k}) + (e_{k+1} - e_n) + (e_0 - e_{k-1}) + (e_k - e_0)} = \prod_{i = 1}^nM_i^{e_{i+1} - e_{i-1}},
		\end{align*}
		which completes the proof.
	\end{proof}

\section{Preliminaries on $p$-cyclic covers} \label{sec:pcyclic}

	Recall that $X$ is a $p$-cyclic cover of $\PP^2$ branched at $C$.
	In this section, we establish some numerical invariants of $X$ that will be of use later.  We also show that there is a natural ring action
	of $\Z[\zeta]$ on $\Br X$, where $\zeta$ is a primitive $p$th root of unity.  

    \subsection{Geometric invariants of $X$}
	\begin{prop}\label{prop:NumericalInvariants}
		We have that $\hh^{0,1}(X) = 0$, that $b_2(X) = 1 + (p - 1)\left(1 + 2g(C)\right), $ and that 
				\[
					\hh^{2,0}(X) = p - 1 - \frac32d\sum_{j = 1}^{p-1}j + 
					\frac12d^2 \sum_{j = 1}^{p-1}j^2,
				\]
				where $g(C)$ denotes the genus of $C$.
	\end{prop}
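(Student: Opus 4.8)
The strategy is to realize $X$ explicitly as a cyclic cover — so work on a resolution of the relevant weighted projective bundle — and to compute the three invariants by standard techniques for $p$-cyclic covers. Concretely, write $X$ inside the total space of $\mathcal{L} = \mathcal{O}_{\PP^2}(d)$ via the equation $y^p = f$, so that $\pi_*\mathcal{O}_X = \bigoplus_{j=0}^{p-1} \mathcal{O}_{\PP^2}(-jd)$. This eigensheaf decomposition under the $\mu_p$-action is the engine for everything: cohomology of $X$ splits into $p-1$ "new" pieces plus the piece pulled back from $\PP^2$.

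\textbf{Step 1: the irregularity.} Compute $\HH^1(X,\mathcal{O}_X) = \bigoplus_{j=0}^{p-1}\HH^1(\PP^2, \mathcal{O}_{\PP^2}(-jd))$. Each summand vanishes: $\HH^1(\PP^2,\mathcal{O}(-jd)) = 0$ for all $j \geq 0$ since line bundles on $\PP^2$ have no $\HH^1$. Hence $\hh^{0,1}(X) = \tfrac12 b_1(X) = 0$. (One should remark that $X$ is smooth projective over a separably closed field of characteristic prime to $p$, so Hodge-theoretic/$\ell$-adic Betti numbers behave as expected and $b_1 = 2\hh^{0,1}$; this is where the standing hypotheses are used.)

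\textbf{Step 2: the geometric genus.} Compute $\hh^{2,0}(X) = \HH^0(X, \omega_X)$. By the adjunction/ramification formula for a $p$-cyclic cover branched over $C \in |\mathcal{O}_{\PP^2}(pd)|$, one has $\omega_X = \pi^*\big(\omega_{\PP^2} \otimes \mathcal{O}_{\PP^2}((p-1)d)\big) = \pi^*\mathcal{O}_{\PP^2}((p-1)d - 3)$. Pushing forward and using the projection formula gives
\[
	\HH^0(X,\omega_X) = \bigoplus_{j=0}^{p-1} \HH^0\big(\PP^2, \mathcal{O}_{\PP^2}((p-1)d - 3 - jd)\big) = \sum_{j=0}^{p-1} \binom{(p-1-j)d - 1}{2},
\]
with the convention that $\binom{m}{2} = 0$ for $m < 2$ (equivalently, for $m \leq 1$ the summand vanishes). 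Reindexing $i = p-1-j$ and expanding $\binom{id-1}{2} = \tfrac12(id-1)(id-2) = \tfrac12 i^2 d^2 - \tfrac32 i d + 1$, then summing over $i = 0, \dots, p-1$ (the $i=0$ term contributing $1$ but with $\binom{-1}{2}=0$, so actually $i$ runs effectively over $1,\dots,p-1$, contributing $p-1$ ones), yields exactly
\[
	\hh^{2,0}(X) = (p-1) - \tfrac32 d \sum_{j=1}^{p-1} j + \tfrac12 d^2 \sum_{j=1}^{p-1} j^2.
\]
The only subtlety is checking the binomial vanishing convention is consistent — i.e., that dropping the "negative" terms is legitimate — which is automatic since $\HH^0$ of a negative line bundle on $\PP^2$ is genuinely zero.

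\textbf{Step 3: the second Betti number.} Use Noether's formula $\chi(\mathcal{O}_X) = \tfrac{1}{12}(K_X^2 + c_2(X))$ together with $c_2(X) = e(X) = 2 - 2b_1(X) + b_2(X) = 2 + b_2(X)$ (by Step 1). We know $\chi(\mathcal{O}_X) = 1 - \hh^{0,1}(X) + \hh^{2,0}(X) = 1 + \hh^{2,0}(X)$ from Steps 1–2. We also compute $K_X^2 = \big(\pi^*\mathcal{O}_{\PP^2}((p-1)d-3)\big)^2 = p\cdot((p-1)d-3)^2$ since $\pi$ has degree $p$ and $\mathcal{O}_{\PP^2}(1)^2 = 1$. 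Solving $b_2(X) = 12\chi(\mathcal{O}_X) - K_X^2 - 2$ and substituting the value of $\hh^{2,0}(X)$ from Step 2 gives a polynomial in $d$ and $p$ which one simplifies. Independently (as a cross-check and the cleaner route), use the genus formula $2g(C) - 2 = pd(pd - 3)$ for a smooth plane curve of degree $pd$, and verify algebraically that the resulting expression equals $1 + (p-1)(1 + 2g(C))$. The identity to verify reduces to a clean polynomial identity in $p$ and $d$ using $\sum_{j=1}^{p-1} j = \tfrac{p(p-1)}{2}$ and $\sum_{j=1}^{p-1} j^2 = \tfrac{(p-1)p(2p-1)}{6}$.

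\textbf{Main obstacle.} The conceptual steps are routine once the eigensheaf decomposition and the ramification formula for $\omega_X$ are in hand; the genuine work is the bookkeeping in Step 3 — reconciling the Noether-formula output with the target expression $1 + (p-1)(1 + 2g(C))$ — and being careful about the binomial-coefficient vanishing conventions in Step 2 when $d$ is small or $p$ is small. I would also want to state clearly at the outset why the $\ell$-adic Betti numbers are insensitive to the characteristic (smooth proper, good reduction from a lift, or simply invoke that $X$ is a smooth cyclic cover and the computation of $b_2$ via $\chi_{\mathrm{top}} = \sum (-1)^i b_i$ is purely numerical given $\chi(\mathcal{O}_X)$ and $K_X^2$, both of which are computed algebraically). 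A secondary point to handle carefully: the claim that $\hh^{0,1} = \tfrac12 b_1$ and $\hh^{2,0}$ as defined (via $\HH^0(\omega_X)$, matching the paper's "geometric genus") behave well in positive characteristic — here one leans on the fact that $X$ is a cyclic cover with $\HH^1(\mathcal{O}_X) = 0$, so no pathologies ($b_1 = 0$ forces everything to be tame).
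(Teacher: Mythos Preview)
Your proposal is correct and follows essentially the same approach as the paper: both use the eigensheaf decomposition $\pi_*\mathcal{O}_X \cong \bigoplus_{j=0}^{p-1}\mathcal{O}_{\PP^2}(-jd)$ to compute $\hh^{0,1}$ and $\hh^{2,0}$, the canonical-bundle formula $K_X = \pi^*\mathcal{O}_{\PP^2}((p-1)d-3)$ with the projection formula for $K_X^2$, and Noether's formula to extract $b_2$. The only cosmetic difference is that the paper computes $\hh^{2,0}$ as $\sum_j \hh^2(\PP^2,\mathcal{O}(-jd))$ via Serre duality while you compute it as $\sum_j \hh^0(\PP^2,\mathcal{O}((p-1-j)d-3))$ via the projection formula for $\pi_*\omega_X$---these are the same sums, and the paper likewise handles the Step~3 algebra explicitly.
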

	\begin{proof}
		Since $\pi \colon X \to \PP^2$ is an affine morphism, by~\cite[Chap. III, Ex. 8.2]{Hartshorne}, we have that
		\[
			\HH^i(X,\OO_X) \isom \HH^i(S, \pi_*\OO_X),\quad i = 0,1,2.
		\]
		Furthermore, by~\cite[Chap. I, Lemma 17.2]{BHPVdV}, $\pi_*\OO_X \isom \bigoplus_{j = 0}^{p - 1}\calL^{-j}$, where $\calL = \OO(d)$.	
Thus, we have
		\begin{align*}
			\chi(X) & = \hh^0(X,\OO_X) - \hh^1(X, \OO_X) + \hh^2(X, \OO_X)\\
			& = \hh^0\left(S,\oplus_{j = 0}^{p - 1}\calL^{-j}\right) 
			- \hh^1\left(S, \oplus_{j = 0}^{p - 1}\calL^{-j}\right) 
			+ \hh^2\left(S, \oplus_{j = 0}^{p - 1}\calL^{-j}\right)
			 = \sum_{j = 0}^{p-1}\chi(S, \calL^{-j}).
		\end{align*}
		A similar argument shows that 
		\[
			\hh^{2,0}(X)  = \hh^2(S, \OO_S) + \hh^2(S, \mathcal{L}^{-1}) + \cdots +
			\hh^2(S, \mathcal{L}^{1-p}).
		\]
	
		Since $\hh^1(\PP^2, \OO(n)) = 0$ for all $n$ and $\hh^0(\PP^2, \OO(n)) = 0$ for all $n<0$, 
		$$\hh^{0,1}(X) = 1 + \hh^{2,0}(X) - \chi(X) = 1 - \hh^0(\PP^2,\OO_{\PP^2}) = 0.$$  
		The formula for $\hh^{2,0}(X)$ is deduced from Serre duality together with the formula $\hh^0(\PP^2, \OO(n)) = \binom{n+2}{2}$ for $n \geq -2$.
		
		Lastly, we prove the formula for $b_2(X)$.  By Noether's formula,
		\[
			b_2(X) := 12\chi(X) - 2 - K_{X}^2
		\]
		Furthermore, $K_X = \pi^*(K_{\PP^2}\otimes\calL^{p - 1}) = \pi^*\OO(pd - d - 3)$~\cite[Chap. I, Lemma 17.1]{BHPVdV}.  Therefore, by the projection formula,
		\[
			K_X^2 = p\left(pd - d - 3\right)^2 = p(p-1)^2d^2 - 6p(p-1)d + 9p.
		\]
		So
		\begin{align*}
			b_2(X) & = 12\chi(X) - 2 - K_X^2 = 10 + 12h^{2,0}(X) - K_X^2\\
			& = 10 + 12p - 12 - 18d\sum_{j=1}^{p-1}j + 6d^2\sum_{j=1}^{p-1}j^2 -p(p-1)^2d^2 + 6p(p-1)d - 9p\\
			& = 3p - 2 + d\left(6p(p-1)d - 18\sum_{j=1}^{p-1}j\right)
			+ d^2\left(-p(p-1)^2+ 6\sum_{j=1}^{p-1}j^2\right)\\
			& = 3p - 2 - 3p(p-1)d + p^2(p-1)d^2 = 1 + (p-1)\left(1 + (pd - 1)(pd - 2)\right).
		\end{align*}
		We complete the proof by noting that $g(C) = \binom{pd-1}{2}.$
	\end{proof}

	\begin{prop}\label{Ptriv}
	The fundamental group $\pi_1(X)$ is trivial.
	\end{prop}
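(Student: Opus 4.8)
The plan is to use the fact that $X$ is a $p$-cyclic cover of $\PP^2$ branched over the smooth curve $C$, and to exploit the general theory of fundamental groups of cyclic covers via the branch locus. First I would recall that $\PP^2$ is simply connected, and more importantly that $\PP^2 \setminus C$ has cyclic fundamental group: by the classical theorem of Zariski (see also Deligne--Fulton), for a smooth plane curve $C$ of degree $e$ the group $\pi_1(\PP^2 \setminus C)$ is the cyclic group $\Z/e\Z$, generated by a small loop around $C$. (Over a separably closed field of characteristic $q \ne p$ one uses the prime-to-$q$ part of the étale fundamental group, which suffices since $p \ne q$; the tame fundamental group of $\PP^2\setminus C$ is the pro-$(q')$ completion of $\Z/e\Z$.) The $p$-cyclic cover $X \to \PP^2$, branched over $C$, restricts over $U_0 := \PP^2 \setminus C$ to an unramified $\Z/p\Z$-cover $X_0 \to U_0$ classified by the surjection $\pi_1(U_0) \cong \Z/e\Z \twoheadrightarrow \Z/p\Z$ sending the meridian of $C$ to a generator (this is exactly the defining datum of the $p$-cyclic cover, using $p \mid e$).

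Next I would compute $\pi_1(X_0)$: since the cover $X_0 \to U_0$ corresponds to the kernel of $\Z/e\Z \twoheadrightarrow \Z/p\Z$, we get $\pi_1(X_0) \cong p\Z/e\Z \cong \Z/(e/p)\Z$, generated by the $p$-th power of the meridian of $C$, which is a loop going $p$ times around $C$. The key point is now the local structure of the cover near the ramification locus: over a small punctured disk transverse to $C$, the map $X \to \PP^2$ looks like $z \mapsto z^p$, so the meridian of $\widetilde C = (\pi^{-1}(C))_{\red}$ in $X$ maps to the $p$-th power of the meridian of $C$ in $\PP^2$. Therefore, when we fill back in the branch divisor $\widetilde C$ to recover $X = X_0 \cup \widetilde C$, by van Kampen (or the standard statement that $\pi_1(X)$ is the quotient of $\pi_1(X_0)$ by the normal subgroup generated by the meridians of the components of $\widetilde C$), the generator of $\pi_1(X_0)$ — which is precisely that meridian — is killed. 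Hence $\pi_1(X)$ is trivial.

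The main obstacle, and the step that needs the most care, is justifying the two geometric inputs in the étale / positive-characteristic setting rather than just over $\C$: namely (i) that $\pi_1^{\mathrm{tame}}(\PP^2_k \setminus C)^{(q')}$ is the prime-to-$q$ completion of $\Z/e\Z$ for $k$ separably closed and $C$ smooth of degree $e$, and (ii) the van Kampen–type statement describing $\pi_1(X)$ as the quotient of $\pi_1(X \setminus \widetilde C)$ by the normal closure of the ramification meridians. For (i) one can either invoke Grothendieck's specialization/comparison theorems to reduce to the complex case (lifting $C$ to characteristic $0$, which is possible for smooth plane curves since the moduli space is smooth), or cite the general abhyankar-type results on fundamental groups of complements of smooth divisors; since $p \neq q = \Char k$ the cover $X \to \PP^2$ is tame along $C$, so the tame fundamental group is the relevant object and the prime-to-$q$ comparison applies. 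For (ii), over $\C$ this is classical (Nori, or direct van Kampen after choosing tubular neighborhoods); in general it follows from Grothendieck–Murre's theory of the tame fundamental group of a variety with a normal crossings divisor, or again by lifting to characteristic zero. I would present the argument over $\C$ in detail and then remark that the étale statement follows by the standard specialization arguments, citing \cite{Grothendieck} or SGA1; alternatively, one could give a self-contained proof using the fact (already available from Proposition~\ref{prop:NumericalInvariants}, $\hh^{0,1}(X)=0$) together with some bound, though this only controls the abelianized prime-to-$q$ part and does not by itself give triviality, so the van Kampen route is cleaner.
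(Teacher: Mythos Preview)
Your argument is correct: computing $\pi_1(\PP^2\setminus C)\cong \Z/e\Z$ via Zariski's theorem, identifying $\pi_1(X\setminus\tilde C)$ with the index-$p$ subgroup, and then killing the meridian of $\tilde C$ (which is the $p$th power of the meridian of $C$) by van Kampen does give $\pi_1(X)=1$, and your discussion of the tame/prime-to-$q$ issues in positive characteristic is the right way to handle the \'etale setting.

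The paper, however, does not carry out any of this: its entire proof is a one-line citation of \cite[Proposition~8~(iii)]{Abhyankar}, which directly gives the simple connectedness of a tame cover of $\PP^2$ branched along a smooth irreducible curve. In effect you have sketched (a version of) the content of that proposition rather than invoking it. Your route is more self-contained and makes the mechanism transparent---one sees exactly where smoothness and irreducibility of $C$ and the cyclicity of the cover are used---at the cost of having to justify the Zariski-type and van Kampen-type inputs in the \'etale setting, which you correctly flag as the nontrivial steps. The paper's route is quicker and avoids those justifications by outsourcing them to Abhyankar, but is less illuminating. If you want to match the paper's economy, simply cite Abhyankar; if you want a self-contained account, your outline is sound, though you should pin down precise references for the two inputs (e.g.\ SGA1 Exp.~XIII for specialization of tame $\pi_1$, and Grothendieck--Murre or Fulton's version of Zariski's theorem for the complement).
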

	\begin{proof}
	Since $X \to \P^2$ is tamely ramified over a smooth, irreducible curve, this follows from \cite[Proposition 8 (iii)]{Abhyankar}.
	\end{proof}
	\begin{cor}\label{CpicH3}
		The groups $\Pic X [p^n]$, $\HH^1(X, \Z/p^n)$, and $\HH^3(X, \Z/p^n)$ are trivial for all $n$.
	\end{cor}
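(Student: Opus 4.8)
The plan is to derive all three vanishings formally from Proposition~\ref{Ptriv} (triviality of $\pi_1(X)$) together with the Kummer sequence and Poincaré duality, using throughout that $k$ is separably closed of characteristic $q \neq p$: this guarantees that $p^n$ is invertible on $X$ and that the \'etale sheaf $\mu_{p^n}$ is (noncanonically) isomorphic to the constant sheaf $\Z/p^n$ on $X$, since $x^{p^n}-1$ is separable over $k$.

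First I would dispatch $\HH^1(X,\Z/p^n)$: because $X$ is connected, $\HH^1(X,\Z/p^n) = \Hom_{\mathrm{cont}}(\pi_1(X),\Z/p^n)$, which vanishes by Proposition~\ref{Ptriv}. Next, for $\Pic X[p^n]$, I would extract from the long exact cohomology sequence of the Kummer sequence $1 \to \mu_{p^n} \to \G_m \xrightarrow{p^n} \G_m \to 1$ on $X_{\et}$ the exact piece
\[
	0 \to \HH^0(X,\G_m)\big/\big(\HH^0(X,\G_m)\big)^{p^n} \to \HH^1(X,\mu_{p^n}) \to \Pic(X)[p^n] \to 0 .
\]
Since $X$ is proper and geometrically integral over $k$, one has $\HH^0(X,\G_m)=k^\times$, which is $p^n$-divisible (again because $x^{p^n}-a$ is separable over $k$ when $\Char k \ne p$), so the left-hand term vanishes and $\Pic(X)[p^n] \cong \HH^1(X,\mu_{p^n}) \cong \HH^1(X,\Z/p^n) = 0$. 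Finally, for $\HH^3(X,\Z/p^n)$ I would invoke Poincaré duality for \'etale cohomology on the smooth projective surface $X$ (with $p^n$ invertible on $X$), cf.~\cite[Chap.~VI]{Milne}: it furnishes a perfect pairing $\HH^i(X,\Z/p^n) \times \HH^{4-i}(X,\mu_{p^n}^{\otimes 2}) \to \HH^4(X,\mu_{p^n}^{\otimes 2}) \cong \Z/p^n$, and taking $i=3$ together with $\mu_{p^n}^{\otimes 2} \cong \Z/p^n$ yields $\HH^3(X,\Z/p^n) \cong \HH^1(X,\Z/p^n)^\vee = 0$.

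I do not expect any real obstacle: the argument is entirely formal once Proposition~\ref{Ptriv} is available, and in particular the numerical invariants of Proposition~\ref{prop:NumericalInvariants} are not needed here. The only points deserving a word of care are the identification $\mu_{p^n} \cong \Z/p^n$ of \'etale sheaves on $X$ and checking that the hypotheses of Poincaré duality are met, both of which are immediate from $X$ being a smooth projective surface over a separably closed field of characteristic $q \neq p$. (Alternatively, $\HH^3(X,\Z/p^n)=0$ could be deduced from $b_3(X)=b_1(X)=0$ plus the absence of $p$-torsion in the integral cohomology forced by $\pi_1(X)=1$, but the Poincaré duality argument is cleaner.)
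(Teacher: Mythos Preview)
Your argument is correct and follows essentially the same route as the paper: both deduce $\HH^1(X,\Z/p^n)=0$ from $\pi_1(X)=1$, then $\HH^3(X,\Z/p^n)=0$ by Poincar\'e duality (using $\mu_{p^n}\cong\Z/p^n$ over the separably closed base). The only cosmetic difference is that for $\Pic X[p^n]$ the paper invokes directly the geometric fact that a nontrivial $p^n$-torsion line bundle yields a nontrivial cyclic \'etale cover, whereas you reach the same conclusion via the Kummer sequence; these are equivalent formulations of the same phenomenon.
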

	\begin{proof}
		Nontrivial elements of $\Pic X[p^n]$ give rise to nontrivial cyclic \'{e}tale covers of $X$ of order $p^n$.   
		Proposition \ref{Ptriv} shows that no such covers exist. Also, $\HH^1(X, \Z/p^n) = \Hom(\pi_1(X), \Z/p^n) = 0$ for
		all $n$.  Since $k$ contains all of the $p^{\infty}$th roots of unity, Poincar\'{e} duality for \'{e}tale cohomology shows that
		$\HH^3(X, \Z/p^n) = 0$ for all $n$.  
		\end{proof}
	
	\begin{lemma}\label{Lbetti}
	We have $\HH^2(X, \Z/p) \cong (\Z/p)^{b_2(X)}$.
	\end{lemma}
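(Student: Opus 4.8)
The plan is to compute $\HH^2(X,\Z/p)$ by combining the Kummer sequence with the vanishing results already established. Recall the Kummer exact sequence $1 \to \mu_p \to \G_m \xrightarrow{p} \G_m \to 1$; since $k$ is separably closed of characteristic $q \neq p$, we have $\mu_p \cong \Z/p$ as étale sheaves on $X$. Taking cohomology yields the short exact sequences
\[
0 \to \Pic X / p\Pic X \to \HH^2(X,\Z/p) \to \Br X[p] \to 0
\]
together with an identification of the kernel of $\Pic X/p \to \HH^2(X,\Z/p)$ coming from $\HH^1(X,\G_m)=\Pic X$ and the vanishing $\HH^1(X,\Z/p)=0$ (Corollary~\ref{CpicH3}). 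So the first step is: since $\HH^1(X,\Z/p)=0$ by Corollary~\ref{CpicH3}, the map $\Pic X \xrightarrow{p} \Pic X$ has cokernel injecting into $\HH^2(X,\Z/p)$.

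The second step is to pin down the dimensions over $\F_p$. On one hand, $\HH^2(X,\Z/p)$ is a finite $\F_p$-vector space (as $X$ is a smooth projective surface), and by the comparison of torsion in $\ell$-adic and mod-$\ell$ cohomology for a smooth proper surface, $\dim_{\F_p}\HH^2(X,\Z/p) = b_2(X) + (\text{torsion contributions from }\HH^2\text{ and }\HH^3)$. Here I would invoke $\HH^3(X,\Z/p)=0$ (Corollary~\ref{CpicH3}) and the torsion-freeness of $\HH^2(X,\Z_p)$ — equivalently, that $\HH^2(X,\Z/p)$ has dimension exactly $b_2(X)$ — which follows because $\HH^1(X,\Z/p)=0$ forces $\HH^2(X,\Z_p)$ to be torsion-free (the $p$-torsion of $\HH^2(X,\Z_p)$ is a quotient of $\HH^1(X,\Z/p)$ via the Bockstein) and because $\HH^3(X,\Z/p)=0$ forces the $p$-cotorsion to vanish as well. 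Then the universal coefficient sequence $0 \to \HH^2(X,\Z_p)/p \to \HH^2(X,\Z/p) \to \HH^3(X,\Z_p)[p] \to 0$ gives $\dim_{\F_p}\HH^2(X,\Z/p) = b_2(X)$.

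The main obstacle, and the step requiring the most care, is the input from $\ell$-adic cohomology: justifying that $\HH^2(X,\Z_p)$ is torsion-free and that $\HH^3(X,\Z_p)$ has no $p$-torsion, so that the mod-$p$ Betti number equals $b_2(X)$ rather than merely being bounded below by it. The cleanest route is the Bockstein long exact sequence associated to $0 \to \Z_p \xrightarrow{p} \Z_p \to \Z/p \to 0$: from $\HH^1(X,\Z/p)=0$ one deduces $\HH^2(X,\Z_p)[p]=0$ and from $\HH^3(X,\Z/p)=0$ one deduces that $\HH^3(X,\Z_p)[p]$ is a quotient of $\HH^2(X,\Z/p)$ mapping to $0$... — more precisely one reads off $\HH^3(X,\Z_p)/p \hookrightarrow \HH^3(X,\Z/p) = 0$ and $\HH^3(X,\Z_p)[p]$ sits in the cokernel of $\HH^2(X,\Z_p)/p \to \HH^2(X,\Z/p)$; since $\HH^3(X,\Z_p)$ is finitely generated and $\HH^3(X,\Z_p)/p = 0$ forces its free part to vanish while a separate argument bounds its torsion, one gets the equality $\dim_{\F_p}\HH^2(X,\Z/p) = \operatorname{rank}\HH^2(X,\Z_p) = b_2(X)$. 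Assembling: $\HH^2(X,\Z/p)$ is an $\F_p$-vector space of dimension $b_2(X)$, hence $\HH^2(X,\Z/p) \cong (\Z/p)^{b_2(X)}$, which is the claim.
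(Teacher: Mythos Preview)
Your approach is correct in outline but differs from the paper's. The paper never passes to $\Z_p$-coefficients: instead it sets $b:=\dim_{\F_p}\HH^2(X,\Z/p)$ and proves by induction on $n$, using only the short exact sequences $0\to\Z/p^n\to\Z/p^{n+1}\to\Z/p\to 0$ together with the vanishing of $\HH^1$ and $\HH^3$ with $\Z/p^n$-coefficients (Corollary~\ref{CpicH3}), that $\HH^2(X,\Z/p^n)\cong(\Z/p^n)^b$ for every $n$. Taking the inverse limit identifies $b$ with $b_2(X)$. Your route instead runs the Bockstein for $0\to\Z_p\xrightarrow{p}\Z_p\to\Z/p\to 0$ directly: $\HH^1(X,\Z/p)=0$ gives torsion-freeness of $\HH^2(X,\Z_p)$, and $\HH^3(X,\Z/p)=0$ gives $\HH^3(X,\Z_p)/p=0$, hence $\HH^3(X,\Z_p)=0$ and thus $\HH^2(X,\Z/p)\cong\HH^2(X,\Z_p)/p\cong(\Z/p)^{b_2(X)}$. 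Both arguments use exactly the same inputs from Corollary~\ref{CpicH3}; the paper's version has the virtue of staying entirely with finite-coefficient sheaves and thus sidestepping any discussion of why the long exact sequence with $\Z_p$-coefficients is available (which is fine here since $X$ is proper and all groups are finite, but deserves a sentence).

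Two small clean-ups: the Kummer sequence paragraph at the start is irrelevant to this lemma (it is the content of Corollary~\ref{cor:Brdim}, not of Lemma~\ref{Lbetti}) and should be removed. And your handling of $\HH^3(X,\Z_p)$ is more hesitant than it needs to be: once you know $\HH^3(X,\Z_p)/p=0$ and that this $\Z_p$-module is finitely generated, Nakayama's lemma kills the whole module at once --- there is no need for a ``separate argument for the torsion.''
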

	\begin{proof}
	Let $b$ be such that $\HH^2(X, \Z/p) \cong (\Z/p)^b$.  It suffices to show that $\HH^2(X, \Z/p^n) \cong (\Z/p^n)^b$ for all $n$, as then
	$b$ will be the Betti number we seek.  We proceed by induction.  Consider the exact sequence
	$$0 \to \Z/p^n \to \Z/p^{n+1} \to \Z/p \to 0.$$
	Using the long exact sequence for cohomology, as well as Corollary \ref{CpicH3}, we get a short exact sequence
	$$0 \to \HH^2(X, \Z/p^n) \to \HH^2(X, \Z/p^{n+1}) \to \HH^2(X, \Z/p) \to 0,$$ which by induction can be written as
	$$0 \to (\Z/p^n)^b \to \HH^2(X, \Z/p^{n+1}) \to (\Z/p)^b \to 0.$$  Thus
	$$\HH^2(X, \Z/p^{n+1}) \cong (\Z/p^n \oplus \Z/p)^s \oplus (\Z/p^{n+1})^t,$$ where $s + t = b$.  The map to $\HH^2(X, \Z/p)$ is reduction 
	modulo $p$, which yields an image of $(\Z/p)^{2s+t}$.  
	So $s + t = 2s + t = b$, and we conclude that $s = 0$ and $t = b$, completing the induction.	
	\end{proof}
	
	\begin{cor}\label{cor:Brdim}
		We have $\dim_{\F_p}\Br X[p] = (p - 1)(1 + 2g(C)) + 1 - \rho(X)$, where $\rho(X) = \textup{rank }\textup{NS}(X)$.
	\end{cor}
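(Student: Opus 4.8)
The plan is to read off the dimension of $\Br X[p]$ from the Kummer sequence, using the Betti-number computation already established. Since $\Char k = q \neq p$, the Kummer sequence $0 \to \mu_p \to \G_m \xrightarrow{p} \G_m \to 0$ is exact in the \'etale topology, and taking the long exact sequence in cohomology on $X$ (recalling $\HH^1(X,\G_m) = \Pic X$ and $\HH^2(X,\G_m) = \Br X$ for our smooth quasiprojective $X$) yields a short exact sequence
\[
	0 \to \Pic X / p\Pic X \to \HH^2(X,\mu_p) \to \Br X[p] \to 0.
\]
So it suffices to compute the $\F_p$-dimensions of the two outer terms.

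For the middle term: since $k$ is separably closed of characteristic prime to $p$, a choice of primitive $p$th root of unity gives an isomorphism $\mu_p \cong \Z/p$ of \'etale sheaves on $X$, hence $\HH^2(X,\mu_p) \cong \HH^2(X,\Z/p)$, which is $(\Z/p)^{b_2(X)}$ by Lemma~\ref{Lbetti}. For the kernel: because $\hh^{0,1}(X) = 0$ (Proposition~\ref{prop:NumericalInvariants}), the reduced Picard scheme $\Pic^0 X$ is an abelian variety of dimension at most $\hh^1(X,\OO_X) = 0$, hence trivial; therefore $\Pic X = \mathrm{NS}(X)$ is a finitely generated abelian group of rank $\rho(X)$. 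By Corollary~\ref{CpicH3} it has no $p$-torsion, so any prime-to-$p$ torsion subgroup is killed by the surjection $\cdot p$ onto itself, and we conclude $\Pic X / p\Pic X \cong (\Z/p)^{\rho(X)}$.

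Combining, $\dim_{\F_p}\Br X[p] = b_2(X) - \rho(X)$, and substituting the formula $b_2(X) = 1 + (p-1)(1 + 2g(C))$ from Proposition~\ref{prop:NumericalInvariants} gives the claimed value $(p-1)(1 + 2g(C)) + 1 - \rho(X)$.

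The only place requiring care is the identification $\Pic X/p\Pic X \cong (\Z/p)^{\rho(X)}$: one must rule out both a positive-dimensional $\Pic^0$ and $p$-torsion in $\Pic X$, and also check that the Betti number appearing in Noether's formula (an $\ell$-adic invariant) really is the mod-$p$ rank appearing here. All three points are already handled — by $\hh^{0,1}(X)=0$, by Corollary~\ref{CpicH3}, and by Lemma~\ref{Lbetti} respectively — so the argument is essentially a bookkeeping assembly of the preceding results, with no substantial new obstacle.
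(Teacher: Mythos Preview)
Your proof is correct and follows essentially the same route as the paper: apply the Kummer sequence to extract the short exact sequence $0 \to \Pic X/p\Pic X \to \HH^2(X,\Z/p) \to \Br X[p] \to 0$, then identify the outer terms using Corollary~\ref{CpicH3} and Lemma~\ref{Lbetti} and substitute the Betti number from Proposition~\ref{prop:NumericalInvariants}. You are slightly more careful than the paper in justifying $\Pic X/p\Pic X \cong (\Z/p)^{\rho(X)}$, explicitly invoking $\hh^{0,1}(X)=0$ to ensure $\Pic X$ is finitely generated before appealing to the absence of $p$-torsion; the paper simply cites Corollary~\ref{CpicH3} and leaves this implicit.
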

	\begin{proof}
		
		By Proposition~\ref{prop:NumericalInvariants}, we need only prove that $\dim_{\F_p}\Br X[p] = b_2(X) - \rho(X)$.  Using the Kummer 
		exact sequence on $X$:
		$$1 \to \Z/p \to \G_m \to \G_m \to 1,$$ we obtain the following from the long exact sequence:
		$$0 \to \Pic X/p \cdot \Pic X \to \HH^2(X, \Z/p) \to \Br X[p] \to 0.$$
		By Corollary~\ref{CpicH3}, the first term is $(\Z/p)^{\rho(X)}$.  By Lemma~\ref{Lbetti}, the middle term is $(\Z/p)^{b_2(X)}$.  The corollary follows.
	\end{proof}
	
	\subsection{The cyclic action on the Brauer group}
		Let $\zeta$ denote a generator of the Galois group of $\pi \colon X \to \PP^2$.  Then $\zeta$ acts on $\Br X$ as follows.  Embed $\Br X$ into $\Br \kk(X) = \HH^2(\kk(X), (\kk(X)_s)^{\times})$.  Extend the canonical automorphism $\zeta^*$ of $\kk(X)^{\times}$ arbitrarily to an automorphism of $(\kk(X)_s)^{\times}$ (which we also call $\zeta^*$).  We let $\zeta$ act on $G_{\kk(X)}$ by 
		$$(\zeta g) (a) = (\zeta^*)^{-1} (g (\zeta^* (a)))$$ 
		for $g \in G_{\kk(X)}$ and $a \in \kk(X)_s^{\times}$. These two actions are compatible, and thus give an action of $\zeta$ on $\HH^i(\kk(X), (\kk(X)_s)^{\times})$ for all $i$.  This action is independent of the extension of $\zeta^*$ chosen, as can be seen by setting $i=0$ and using the fact that cohomology is a universal $\delta$-functor.  Setting $i = 2$ gives an action on $\Br \kk(X)$.	This action clearly preserves $\Br X \subseteq \Br \kk(X)$, thus giving an action of $\zeta$ on $\Br X$.  
	
		\begin{prop}\label{Pringaction}
		The action of $\zeta$ on $\Br X$ extends to an action of the ring $\Z[\zeta]/(1 + \zeta + \cdots + \zeta^{p-1})$ on $\Br X$.
		\end{prop}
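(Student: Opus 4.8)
The plan is to reduce the statement to showing that the operator $N := 1 + \zeta + \cdots + \zeta^{p-1}$ annihilates $\Br X$. The $\zeta$-action already makes $\Br X$ a module over the group ring $\Z[\zeta]/(\zeta^p - 1)$ of $\Gal(X/\PP^2) \cong \Z/p\Z$, since $\zeta$ has order $p$ and hence $\zeta^p$ acts as the identity on $\Br X$ — concretely, the $\zeta$-action on $\Br\kk(X) = \HH^2(G_{\kk(X)}, \kk(X)_s^\times)$ is conjugation by any lift $\tilde\zeta \in G_{\kk(\PP^2)}$ of $\zeta$, and $\tilde\zeta^p$ lies in the normal subgroup $G_{\kk(X)}$, so acts trivially on $\HH^\bullet(G_{\kk(X)}, -)$. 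Since $\zeta^p - 1 = (\zeta - 1)N$, the module structure factors through $\Z[\zeta]/(1+\zeta+\cdots+\zeta^{p-1})$ precisely when $N$ kills $\Br X$.

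To show $N$ kills $\Br X$, I would use the transfer (corestriction) attached to $\pi$. Since $\pi \colon X \to \PP^2$ is finite flat of degree $p$, one has $R^i\pi_*\G_m = 0$ for $i > 0$, so $\HH^i(X, \G_m) = \HH^i(\PP^2, \pi_*\G_m)$, and the norm homomorphism $\Norm \colon \pi_*\G_m \to \G_m$ of the finite locally free $\OO_{\PP^2}$-algebra $\pi_*\OO_X$ induces a transfer $\pi_* \colon \Br X \to \Br\PP^2$. Restricted to generic points this transfer is the corestriction $\Cor_{\kk(X)/\kk(\PP^2)} \colon \Br\kk(X) \to \Br\kk(\PP^2)$ for the cyclic degree-$p$ extension $\kk(X)/\kk(\PP^2)$, compatibly with the injections $\Br X \hookrightarrow \Br\kk(X)$ and $\Br\PP^2 \hookrightarrow \Br\kk(\PP^2)$ of Theorem~\ref{thm:inj}. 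The standard identity $\Res \circ \Cor = \sum_{\sigma \in \Gal(\kk(X)/\kk(\PP^2))}\sigma$ — valid because $G_{\kk(X)}$ is normal in $G_{\kk(\PP^2)}$, its quotient acting on $\HH^\bullet(G_{\kk(X)},-)$ exactly via the $\zeta$-action above — then yields
\[
	\pi^* \circ \pi_* = \sum_{i=0}^{p-1}\zeta^i = N \qquad\text{on } \Br X.
\]

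The proof then finishes at once: for any $\alpha \in \Br X$ we have $N(\alpha) = \pi^*(\pi_*\alpha)$ with $\pi_*\alpha \in \Br\PP^2_k$, and $\Br\PP^2_k = \Br k = 0$ since $k$ is separably closed; hence $\pi_*\alpha = 0$ and $N(\alpha) = 0$. As $\alpha$ was arbitrary, $N$ annihilates $\Br X$, and the module structure descends to $\Z[\zeta]/(1 + \zeta + \cdots + \zeta^{p-1})$ as claimed.

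The only non-formal ingredients are the two compatibilities of the transfer map — vanishing of $R^i\pi_*\G_m$ for the finite morphism $\pi$, and the fact that the geometric transfer restricts to Galois-cohomological corestriction on function fields — both of which are standard, and I expect these to be the only places requiring a careful reference. If one prefers to avoid the geometric transfer, an alternative is to argue entirely on function fields: set $\beta := \Cor_{\kk(X)/\kk(\PP^2)}(\alpha)$ and show $\beta$ is unramified along every curve of $\PP^2$, using that $\alpha$ is unramified on the regular surface $X$ (the exact sequence recalled in Section~\ref{sec:res}), that $\pi$ is at worst tamely ramified over $C$ since $\Char k \neq p$, and the compatibility of residue maps with corestriction; then $\beta \in \Br\PP^2_k = 0$ as before. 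In that variant the main point needing care is the residue bookkeeping along the branch curve $C$, though it is routine.
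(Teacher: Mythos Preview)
Your proof is correct and follows essentially the same strategy as the paper: both reduce to showing that $N = 1 + \zeta + \cdots + \zeta^{p-1}$ kills $\Br X$, identify $N$ with $\res \circ \cores$ (equivalently $\pi^*\circ\pi_*$), and conclude by $\Br\PP^2_k = 0$. The only difference is in how the corestriction is shown to land in $\Br\PP^2$: the paper works on function fields and uses the commutative square between residue maps and corestriction to see that $\cores(\alpha)$ has trivial residues --- precisely your ``alternative'' route --- whereas your primary argument constructs the transfer geometrically via the norm $\pi_*\G_m \to \G_m$ and the vanishing of $R^i\pi_*\G_m$ for finite $\pi$, so that $\pi_*\alpha$ lies in $\Br\PP^2$ by construction. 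Your version is marginally slicker in that it sidesteps the residue bookkeeping entirely; the paper's version has the virtue of staying within the function-field framework already set up in Section~\ref{sec:res}.
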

		\begin{proof}
		The action of $\Z$ is by scalar multiplication, and this action obviously commutes with the action of $\zeta$.  So it remains 
		to show that $\omega := \sum_{i=0}^{p-1} \zeta^i$ kills $\Br X$.  Now, we have the corestriction and restriction maps
		$$\cores \colon \HH^i(\kk(X), (\kk(X)_s)^{\times}) \to \HH^i(\kk(\PP^2), (\kk(\PP^2)_s)^{\times})$$
		and
		$$\res \colon \HH^i(\kk(\PP^2), (\kk(\PP^2)_s)^{\times}) \to \HH^i(\kk(X), (\kk(X)_s)^{\times}).$$
		We claim that $\res \circ \cores$ is equal to the action of $\omega$ on $\HH^i(\kk(X), (\kk(X)_s)^{\times})$ for all $i$.  
		Since both are endomorphisms of the universal $\delta$-functor, it suffices to show this when $i=0$, in which case both maps are
		simply $\iota \circ \Tr_{\kk(X)/\kk(\PP^2)}$, where $\iota$ is the inclusion $\kk(\PP^2) \hookrightarrow \kk(X)$. Thus the claim is proved.
	
		Let $X_1$ be the set of all codimension $1$ points in $X$, and let $Z_1$ be the set of all codimension $1$ points in $\PP^2$.
		We have a commutative diagram
		\[
		\xymatrix{
					0 \ar[r] & \Br X \ar[r] & \Br \kk(X) \ar[r]^(.35){\partial} \ar[d]^{\cores} & \displaystyle\bigoplus_{x \in X^1} \HH^1(\kk(x), \Q/\Z) \ar[d]^{\cores} \\
					0 \ar[r] & \Br \PP^2 \ar[r] & \Br \kk(\PP^2) \ar[r]^(.35){\partial} & \displaystyle\bigoplus_{z \in Z^1} \HH^1(\kk(z), \Q/\Z) \\
				}
		\]
		where the maps $\partial$ are the residue maps defined in Section \ref{sec:res} and the rows are exact.  The diagram commutes because of the
		naturality of the residue map and the fact that corestriction is a morphism of $\delta$-functors.  Furthermore,
		the diagram shows that if $\alpha \in \Br X
		\subseteq \Br \kk(X)$, then $\cores(\alpha)$ has trivial residues.  Thus $\cores(\alpha) \in \Br \PP^2$, 
		which is trivial.  Since $\omega = \res \circ \cores$, it follows that $\omega$ kills $\alpha$.
		\end{proof}
	
		\begin{remark}
		Proposition \ref{Pringaction} above allows us to think of $\zeta$ as a $p$th root of unity.
		\end{remark}
	
		\begin{cor}\label{Cringstruct}
		We have $\Br X[1-\zeta] \subseteq \Br X[p]$.
		\end{cor}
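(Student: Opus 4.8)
The plan is to deduce this purely formally from Proposition~\ref{Pringaction} and its proof. By definition, $\Br X[1-\zeta]$ is the kernel of multiplication by $1-\zeta$ under the $\Z[\zeta]/(1+\zeta+\cdots+\zeta^{p-1})$-action, so $\alpha \in \Br X[1-\zeta]$ means precisely that $\zeta\alpha = \alpha$; iterating this gives $\zeta^i\alpha = \alpha$ for every $i \geq 0$.

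Next I would invoke the element $\omega := \sum_{i=0}^{p-1}\zeta^i$ that appears in the proof of Proposition~\ref{Pringaction}, where it is shown that $\omega$ annihilates $\Br X$ (via the identification $\omega = \res\circ\cores$ together with the vanishing of $\Br \PP^2$ over the separably closed base field). Applying this to $\alpha$ and using $\zeta^i\alpha = \alpha$ yields
\[
	0 = \omega\alpha = \sum_{i=0}^{p-1}\zeta^i\alpha = \sum_{i=0}^{p-1}\alpha = p\alpha,
\]
so $\alpha \in \Br X[p]$, which is the assertion. There is essentially no obstacle here: all the content is already contained in Proposition~\ref{Pringaction}, and the corollary is a one-line consequence.

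Alternatively—and perhaps worth recording—one can argue through the ring structure itself. Since $p$ is prime, the relation $1+\zeta+\cdots+\zeta^{p-1} = 0$ makes $\Z[\zeta]/(1+\zeta+\cdots+\zeta^{p-1})$ the ring of integers $\Z[\zeta_p]$ of $\Q(\zeta_p)$, in which $1-\zeta^i = (1-\zeta)(1+\zeta+\cdots+\zeta^{i-1})$ and $\prod_{i=1}^{p-1}(1-\zeta^i) = p$, so that $p = u\,(1-\zeta)^{p-1}$ for the unit $u = \prod_{i=1}^{p-1}(1+\zeta+\cdots+\zeta^{i-1})$. Hence $(1-\zeta)\alpha = 0$ forces $(1-\zeta)^{p-1}\alpha = 0$ and therefore $p\alpha = u\,(1-\zeta)^{p-1}\alpha = 0$. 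Either route gives the claim immediately.
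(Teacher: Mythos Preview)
Your proposal is correct and matches the paper's approach: the paper's proof is the single sentence ``This follows from Proposition~\ref{Pringaction} and the fact that $1-\zeta$ divides $p$,'' which is exactly your second route, and your first route is just an explicit unpacking of the same divisibility (since $\zeta\equiv 1 \pmod{1-\zeta}$ forces $\omega\equiv p \pmod{1-\zeta}$).
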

		\begin{proof}
		This follows from Proposition \ref{Pringaction} and the fact that $1-\zeta$ divides $p$.
		\end{proof}
	
\section{$p$-torsion divisor classes and Brauer elements}\label{sec:pullback}
	In this section, we show how to obtain $(1 - \zeta)$-torsion Brauer elements on $X$ from certain divisor classes on $C$.  The correspondence passes through Brauer elements on $U=\P^2\setminus(C \cup L)$.
	
	\subsection{Divisor classes on $C$}

		We will be concerned with the group $\left(\Pic C/\Z L\right)[p]$.  Since any divisor class in $\Pic C$ is $p$-divisible if its degree is $0$ modulo $p$, and the degree of $C$ is $pd,$ the short exact sequence
		\[
			0 \to \Pic \PP^2 \to \Pic C \to \Pic C/\Z L \to 0
		\]
		together with the Snake Lemma proves that $\left(\Pic C/\Z L\right)[p]$ is an extension
		\[ 
			0	\to \Jac C[p] \to \left(\Pic C/\Z L\right)[p] \to \Z/p \to 0.
		\]

		{The goal of this section is to prove  Theorem~\ref{thm:residueBr}, which we restate here for the reader's convenience.}
		\begin{thm_nonum}[\ref{thm:residueBr}]
		The following diagram commutes and the top row is exact.
		\[
				\xymatrix{
					0 \ar[r] & 
					\frac{\Pic X}{\Z H + (1 - \zeta)\Pic X} \ar[r]^j & 
					\left(\frac{\Pic C}{\Z L}\right)[p] 
					\ar[r]^{\psi} \ar@{^(->}^{\phi}[d] & 
					\Br X[1 - \zeta] \ar[r] \ar@{^(->}[d] & 0 \\
					& & \Br U[p]\ar[r]^{\pi^*}
					&\Br \kk(X)[p]
				}
		\] 
		\end{thm_nonum}
		We begin in Section~\ref{subsec:DefiningPsi} by defining the map $\psi$, and prove that it surjects onto $\Br X[1 - \zeta]$ in Section~\ref{sec:Surjectivity}.  Then we define the map $j$ in Section \ref{subsec:Definingj}.  Finally, in Section \ref{subsec:Proof}, we show that these maps fit together in the above exact sequence.  In Section \ref{subsec:Surjectivity2} we give an alternate proof of surjectivity of $\psi$ when $p = 2$.
		{
		\begin{remark}\label{rmk:Galois}
			If $\pi \colon X \to \PP^2$ and $C$ are defined over a subfield $k_0\subset k$, then all groups in Theorem~\ref{thm:residueBr} have an action of $\Gal(k/k_0)$.  It will be clear that the definitions of $\psi$ and $j$ respect the Galois action.  Thus, we obtain an exact sequence and commutative diagram of $\Gal(k/k_0)$-modules.
		\end{remark}}
		\begin{remark}
			When $p = 2$, $\Br X[1- \zeta] = \Br X[2]$, so we obtain all $2$-torsion Brauer classes.  For $p > 2$, the containment $\Br X[1 - \zeta] \subset \Br X[p]$ will always be strict; this will follow from the argument in Section~\ref{subsec:Surjectivity2}.
		\end{remark}
		
	\subsection{Obtaining Brauer elements on $X$}\label{subsec:DefiningPsi}
		\begin{prop}\label{prop:kerToBr}
			There exists an injective homomorphism $\phi \colon \left(\Pic C/\Z L\right)[p] \to \Br U[p]$ and a homomorphism $\psi\colon \left(\Pic C/\Z L\right)[p] \to \Br X[p]$ such that the diagram
			\[
				\xymatrix{
					\left(\Pic C/\Z L\right)[p] 
					\ar[d]^{\psi} \ar@{^(->}[r]^(.60){\phi}& 
					\Br U[p]\ar[d]^{\pi^*}\\
					\Br X[p]\ar@{^(->}[r] &\Br \kk(X)[p]
				}
			\]
			commutes.
		\end{prop}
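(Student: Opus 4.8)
The plan is to construct $\phi$ cohomologically and then to obtain $\psi$ by pulling back along $\pi$. First I would fix the ambient identification. Since $\Pic\PP^2 = \Z H$ with $[C] = pdH$ and $[L] = H$, the two components of $\PP^2\setminus U$ already generate $\Pic\PP^2$, so $\Pic U = 0$ and the Kummer sequence on $U$ gives $\Br U[p] \isom \HH^2(U,\mu_p)$. Set $\A^2 := \PP^2\setminus L$ and $C^\circ := C\setminus(C\cap L)$, a smooth affine curve closed in $\A^2$. Using homotopy invariance, $\HH^i(\A^2,\mu_p) = 0$ for $i\ge 1$, so the Gysin (purity) sequence for $C^\circ\into\A^2\hookleftarrow U$ collapses to an isomorphism $\partial_C\colon \Br U[p] = \HH^2(U,\mu_p)\xrightarrow{\ \sim\ }\HH^1(C^\circ,\mu_p)$; and since $C^\circ$ is a smooth curve, $\HH^1(C^\circ,\mu_p)$ is the subgroup of $\kk(C)^\times/(\kk(C)^\times)^p$ of classes whose divisor on $C^\circ$ is divisible by $p$.

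Next I would define $\phi$. Given $[D]\in(\Pic C/\Z L)[p]$, pick a representative divisor $D_0$ supported on $C^\circ$ (always possible) and write $pD_0 = \delta\,L|_C + \divv_C(g)$ for the forced integer $\delta = \deg(D_0)/d$ and some $g\in\kk(C)^\times$, unique up to $p$-th powers (constants being $p$-th powers, as $k$ is separably closed). Since $L|_C$ is supported on $C\cap L$, $\divv_{C^\circ}(g) = pD_0$ is divisible by $p$, so $[g]\in\HH^1(C^\circ,\mu_p)$; define $\phi([D])$ to be the unique class in $\Br U[p]$ with $\partial_C\phi([D]) = [g]$. Independence of the choice of $D_0$ and additivity follow from additivity of divisors (two such $D_0$ differ by a $\divv_C(h)$ with $h$ a unit at every point of $C\cap L$, changing $g$ by $h^p$). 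For injectivity, if $\phi([D]) = 0$ then $g$ is a $p$-th power, so $pD_0 - \delta\,L|_C = p\,\divv_C(h)$ \emph{as divisors}; comparing coefficients at the points of $C\cap L$ forces $p\mid\delta$, and then $D_0 - (\delta/p)L|_C = \divv_C(h)$ as divisors, whence $[D] = 0$ in $\Pic C/\Z L$. (The one thing to be careful with is to argue with equality of divisors, not of divisor classes; this precision is exactly what prevents $\phi$ from killing the ``$\delta$-direction'' $\Z/p\subseteq(\Pic C/\Z L)[p]$.)

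Finally I would produce $\psi$ by showing $\pi^*\phi([D])\in\Br\kk(X)$ lies in $\Br X$; being automatically unramified away from the curves over $C\cup L$, it suffices to check it is unramified along $\tilde C$ and along $L' := \pi^{-1}(L)$. Over the generic point of $C$ the map $\pi$ is totally ramified of degree $p$ with trivial residue field extension $\kk(\tilde C) = \kk(C)$, so $\partial_{\tilde C}(\pi^*\phi([D])) = p\cdot\partial_C\phi([D]) = 0$. Over the generic point of $L$ the cover $L'\to L$ is finite étale and connected, identifying $\kk(L')$ with the degree-$p$ Kummer extension $\kk(L)\bigl(\sqrt[p]{f/\tilde\ell^{pd}|_L}\bigr)$ (the branch divisor being the $pd$ simple points $C\cap L$); hence $\partial_{L'}(\pi^*\phi([D])) = \res_{\kk(L')/\kk(L)}(\partial_L\phi([D]))$ vanishes iff $\partial_L\phi([D])$ lies in the subgroup $\langle[f/\tilde\ell^{pd}|_L]\rangle$ of $\kk(L)^\times/p$. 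To verify this I would compute $\partial_L\phi([D])$ locally: it is unramified on $L$ away from $C$ (as $\phi([D])$ is unramified off $C\cup L$ and $\partial\circ\partial = 0$), and at each $P\in C\cap L$ its second residue equals $-\partial_P(\partial_C\phi([D])) = -\partial_P([g]) = \delta$ (from $\divv_C(g) = pD_0 - \delta\,L|_C$ and $D_0$ avoiding $C\cap L$); since a rational function on $L\isom\PP^1$ whose divisor is divisible by $p$ is a constant times a $p$-th power, this forces $\partial_L\phi([D]) = \delta\,[f/\tilde\ell^{pd}|_L]$, as needed. Then $\pi^*\phi([D])$ is everywhere unramified, so defines $\psi([D])\in\Br X[p]$, and the square commutes by construction. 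I expect the residue-along-$L$ computation, together with the (easy to botch) divisor bookkeeping in the injectivity step, to be the only points requiring real care.
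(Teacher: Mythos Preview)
Your argument is correct and follows the same overall strategy as the paper: define $\phi$ via the residue at $C$ (your function $g$ with $\divv_C(g) = pD_0 - \delta\,L|_C$ is exactly the paper's $g_D$ up to sign conventions on $n_D$), check injectivity by unwinding what $g \in (\kk(C)^\times)^p$ forces on $\delta$ and $D_0$, and obtain $\psi$ by verifying that $\pi^*\phi$ is unramified along $\tilde C$ and $\pi^{-1}(L)$.

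The one organizational difference is worth noting. The paper works with the full Artin--Mumford complex for $C \cup L \subset \PP^2$, embedding $\Br U[p]$ into $\kk(C)^\times/p \times \kk(L)^\times/p$ as the kernel of the second-residue map $r$; it tracks both the $C$- and $L$-residues of $\phi(D)$ from the outset (via an auxiliary map $\phi'$ into $\kk(C)^\times/p \times \Z/p$), so the unramifiedness at $\pi^{-1}(L)$ is immediate once the $L$-residue is written down explicitly as a power of $f/\tilde\ell^{pd}$. You instead remove $L$ first, use the vanishing of $\HH^{\ge 1}(\A^2, \mu_p)$ to obtain a clean Gysin isomorphism $\Br U[p] \xrightarrow{\sim} \HH^1(C^\circ, \mu_p)$, and only afterwards recover the $L$-residue via the reciprocity $\partial_P\partial_C + \partial_P\partial_L = 0$ at points of $C\cap L$. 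Your setup makes the construction and injectivity of $\phi$ a bit more transparent (one isomorphism rather than an inclusion with a side condition to verify), at the cost of an extra second-residue computation when building $\psi$; the paper's setup makes the reverse trade-off. Both are valid packagings of the same purity/residue machinery, and the underlying computations are identical.
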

		\begin{proof}
			Let $D\in \Div C$ be such that $\overline{[D]}\in \left(\Pic C/\Z L\right)[p]$; then there is a $g_D\in \kk(C)^{\times}$, unique up to constants, and a unique $n_D\in \Z$ such that
			\[
				\divv(g_D) = pD + n_D(L\cap C).
			\]
			We claim:
			\begin{lemma}\label{lem:inj_hom}
				The map $D\mapsto  (g_D, n_D) \in \kk(C)^{\times}/k^\times \times \Z$ induces a well-defined injective homomorphism
			\[
				\phi' \colon \left(\Pic C/\Z L\right)[p] \longrightarrow \kk(C)^{\times}/\kk(C)^{\times p} \times \Z/p.
			\]
			\end{lemma}
			\begin{proof}
				First we note that since $p\neq \Char(k)$ and $k$ is separably closed, we have $k^\times\subset \kk(C)^{\times p}$.  Now let $D, D'$ be two linearly equivalent divisors whose classes are in $\left(\Pic C/\Z L\right)[p]$ and let $h_{D',D}\in \kk(C)$ be such that $\divv(h_{D',D}) = D' - D$.  Then
			\[
				\divv(h_{D',D}^pg_D) = pD' - pD +  pD + n_D(L\cap C),
			\]
			Since $g_{D'}$ and $n_{D'}$ are unique, this implies that $g_{D'} = g_{D}h_{D',D}^p$ and $n_{D'} = n_{D}$.  Thus the map is well-defined on $\Pic C$.
			
			If $D' = D + m(L\cap C)$ for some integer $m\in \Z$, then
			\[
				\divv(g_{D}) = pD + n_D(L\cap C) = pD' + (n_D - mp)(L\cap C).
			\]
			Thus, $g_{D'} = g_D$ and $n_{D'} = n_D - mp$ so we obtain a well-defined map $\phi'$ on $\left(\Pic C/\Z L\right)[p]$.  It is clear from the definition that $\phi'$ is also a homomorphism; it remains to prove injectivity.
			
			Let $D \in \Div C$ be such that $\overline{[D]} \in \left(\Pic C/\Z L\right)[p]$ and such that $g_{D} = h^p$ and $n_{D} = pm$ for some $h\in \kk(C)^{\times}$ and some $m\in \Z$.  Then
			\[
				p\divv(h) = \divv(g_{D}) = pD + n_D(L\cap C),
			\]
			so $D \sim -m(L\cap C)$.  This completes the proof of the lemma.
			\end{proof}
			We resume the proof of Proposition~\ref{prop:kerToBr}.  We claim that the following diagram exists, commutes, and has exact rows.  From this, the proposition follows easily.
			\begin{equation}\label{eq:BigDiagram}
				\xymatrix
				{
					\left(\Pic C/\Z L\right)[p] \;\ar@{^(->}[rr]^{\phi'}
						\ar[rd]^{\phi} \ar[dd]^{\psi}
					& &\frac{\kk(C)^{\times}}{\kk(C)^{\times p}} \times \Z/p \ar@{^(->}[d] \\
					& \Br U[p] \ar@{^(->}[r]^{\partial} \ar[d]^{\pi^*}
					& 
					\frac{\kk(C)^{\times}}{\kk(C)^{\times p}}\times 
					\frac{\kk(L)^{\times}}{\kk(L)^{\times p}}
					\ar[d]^{\pi^*} \ar[r]^{r} 
					&\displaystyle\bigoplus_{P\in C\cup L} \Z/p\\ 
					\Br X[p] \ar@{^(->}[r] & \Br \pi^{-1}(U)[p] \ar[r]^(.45){\partial}
					& \frac{\kk(\tilde C)^{\times}}{\kk(\tilde C)^{\times p}}\times 
					\frac{\kk(\pi^{-1}(L))^{\times}}{\kk(\pi^{-1}(L))^{\times p}}
			}
			\end{equation}
			The existence of the first row follows from Lemma~\ref{lem:inj_hom}.  The existence and exactness of the second and third row follow from~\cite{Grothendieck} and~\cite[Thm. 1]{ArtinMumford} (in the second row, $r$ is the order map).  The  bottom square commutes by functoriality.  It remains to consider the diagonal map, the leftmost vertical map and the top right vertical arrow.
			
			We define the top right arrow componentwise; we take the identity map on the first component and
			\[
				\Z/p\to \kk(L)^{\times}/\kk(L)^{\times p}, \quad
				n\mapsto (\tilde\ell^{dp}/f)^n 
			\]
			on the second. 
			This is injective since $C$ meets $L$ in $pd$ distinct points.  After composing this map with $\phi'$ we obtain
			\[
				\left(\Pic C/\Z L\right)[p] \to \kk(C)^{\times}/\kk(C)^{\times p}\times 
					\kk(L)^{\times}/\kk(L)^{\times p},
					\quad [D]\mapsto (g_D, \tilde\ell^{n_Ddp}f^{-n_D}).
			\]  
			By~\cite[Thm. 1]{ArtinMumford}, 
			
			\[
				\left(r( h_C, h_L) \right)_P = \begin{cases}
					v_P(h_C) + v_P(h_L) & \textup{if }P\in L\cap C,\\
					v_P(h_C) & \textup{if }P \in C\setminus (L\cap C),\\
					v_P(h_L) & \textup{if }P \in L\setminus (L\cap C).\\
				\end{cases}
			\]
			From the definition of $g_D$ and $n_D$, it is clear that $\left(r( h_C, h_L) \right)_P$ is trivial in $\Z/p$ for all points $P\in C\cup L$.  Thus, we obtain a map $\phi \colon \left(\Pic C/\Z L\right)[p] \to \Br U[p]$.
			
			To prove that there exists a map $\psi$ making the diagram commute, it suffices to show that $\partial\circ\pi^*\circ\phi = \pi^*\circ\partial\circ \phi$ is identically $0$.  By the previous paragraph, we know that $\partial\circ\phi \colon D \mapsto (g_D, (\tilde\ell^{dp}/f)^{n_D})$.  Then we observe that $\tilde\ell^{dp}/f$ is a $p^{th}$-power in $\kk(\pi^{-1}(L))$ and that $\pi^*$ acts as raising to the $p^{th}$-power on $\kk(C)^{\times}/\kk(C)^{\times p}$, which completes the proof.
		\end{proof}

	\subsection{Surjectivity of $\psi$}\label{sec:Surjectivity}
		By construction, it is clear that $\im \psi \subseteq \Br X[1 - \zeta]$.  To show the surjectivity of $\psi$ onto $\Br X[1 - \zeta]$, we prove:
		\begin{prop}\label{prop:surj}
			\[
				\Br X[1 - \zeta] = \im \psi.
			\]
		\end{prop}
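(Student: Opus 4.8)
The inclusion $\im\psi\subseteq\Br X[1-\zeta]$ is already known, so the plan is to prove the reverse inclusion: that every $\alpha\in\Br X[1-\zeta]$ equals $\psi([D])$ for a suitable divisor class $[D]$ on $C$. Fix such an $\alpha$; by Corollary~\ref{Cringstruct} it is $p$-torsion, and inside $\Br\kk(X)[p]$ it is fixed by $\Gal(\kk(X)/\kk(\PP^2))=\langle\zeta\rangle$. The first step is to lift $\alpha$ to a $p$-torsion class on $\kk(\PP^2)$, so that its ramification can be analyzed. Since $\kk(X)/\kk(\PP^2)$ is cyclic of degree $p$, the Hochschild--Serre spectral sequence together with Hilbert's Theorem~$90$ (which kills $\HH^1(\langle\zeta\rangle,\kk(X)^\times)$, hence by $2$-periodicity $\HH^3(\langle\zeta\rangle,\kk(X)^\times)$) yields $\Br\kk(X)^{\zeta}=\res(\Br\kk(\PP^2))$, so $\alpha=\res(\beta_0)$ for some $\beta_0\in\Br\kk(\PP^2)$. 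As $\cores\circ\res$ is multiplication by $p=[\kk(X):\kk(\PP^2)]$ and $\cores(\alpha)=0$ — its residues on $\PP^2$ vanish by the square in the proof of Proposition~\ref{Pringaction}, and $\Br\PP^2=0$ — we get $p\beta_0=\cores(\alpha)=0$, i.e.\ $\beta_0\in\Br\kk(\PP^2)[p]$.

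Next I would control the ramification of $\beta_0$ away from $C\cup L$. Write $\kk(X)=\kk(\PP^2)(w)$, $w^p=f/\tilde\ell^{pd}$, with $\tilde\ell$ chosen general so that $V(\tilde\ell)\notin\{C,L\}$ and $V(\tilde\ell)$ avoids the finitely many curves on which $\beta_0$ ramifies. For an irreducible curve $D\neq C$: if $f/\tilde\ell^{pd}$ becomes a $p$-th power on $D$, then $\pi$ splits $D$ into $p$ curves with residue field $\kk(D)$ and functoriality of residues along these trivial extensions forces $\partial_D(\beta_0)=\partial_{D_i}(\alpha)=0$; otherwise $\pi^{-1}(D)$ is irreducible, $\kk(\pi^{-1}(D))/\kk(D)$ is the cyclic degree-$p$ extension cut out by the class of $f/\tilde\ell^{pd}$ in $\kk(D)^\times/\kk(D)^{\times p}$, the corresponding extension of complete local fields is unramified, and $\res_{\kk(\pi^{-1}(D))/\kk(D)}\partial_D(\beta_0)=\partial_{\pi^{-1}(D)}(\alpha)=0$ places $\partial_D(\beta_0)$ in the cyclic subgroup of $\HH^1(\kk(D),\Q/\Z)$ generated by the class of $f/\tilde\ell^{pd}$. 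Let $D_1,\dots,D_r$ be the finitely many such ``inert'' curves, necessarily $\neq C,L$, at which $\beta_0$ ramifies, with $\partial_{D_i}(\beta_0)$ equal to $c_i$ times the class of $f/\tilde\ell^{pd}$, $c_i\in(\Z/p)\setminus\{0\}$. Since $\Pic\PP^2=\Z$, the degree-$0$ divisor $\sum_i c_i D_i-mL$ (with $m=\sum_i c_i\deg D_i$) is principal, say $=\divv(h)$; set $\beta:=\beta_0-\pcyclic{f}{h}$. Because $f/\tilde\ell^{pd}$ is a $p$-th power in $\kk(X)$ we still have $\pi^*\beta=\res\beta=\alpha$, and a residue computation (via Theorem~\ref{thm:res}, using $v_D(f)=0$ for $D\neq C$, that $f/\tilde\ell^{pd}$ is a $p$-th power on the split curves, and that $h$ has its zeros and poles only along $L$ and the $D_i$ with $v_{D_i}(h)=c_i$) shows $\partial_D(\beta)=0$ for all $D\neq C,L$. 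Hence $\beta\in\Br U[p]$ and $\pi^*\beta=\alpha$.

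Finally I would match $\beta$ with a divisor class on $C$. The curve $\pi^{-1}(L)$ is irreducible with residue field the cyclic degree-$p$ extension of $\kk(L)$ cut out by $f|_L$; since $\alpha$ is unramified along it (equivalently, by the analysis of the previous paragraph at the inert curve $L$), $\partial_L(\beta)$ equals the class of $(f|_L)^n$ in $\kk(L)^\times/\kk(L)^{\times p}$ for some $n\in\Z/p$. Now exactness of the second row of diagram~\eqref{eq:BigDiagram} (i.e.\ \cite[Thm.~1]{ArtinMumford}) applied to $\beta\in\Br U[p]$ says $r(\partial_C\beta,\partial_L\beta)=0$; since $L$ and $C$ meet transversally in the $pd$ points $L\cap C$, this reads $v_P(\partial_C\beta)+v_P(\partial_L\beta)\equiv 0\pmod p$ for $P\in L\cap C$ and $v_P(\partial_C\beta)\equiv 0\pmod p$ for $P\in C\setminus L$. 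As $v_P(\partial_L\beta)\equiv n\pmod p$ at each $P\in L\cap C$, a representative $g$ of $\partial_C(\beta)$ satisfies $v_P(g)\equiv -n\pmod p$ for $P\in L\cap C$ and $v_P(g)\equiv 0\pmod p$ elsewhere, so $\divv_C(g)=pD+(-n)(L\cap C)$ for some $D\in\Div C$ with $\overline{[D]}\in\left(\Pic C/\Z L\right)[p]$. Then $\phi'([D])=(\bar g,-n)$, and $\phi([D])$ has exactly the same residues along $C$ and $L$ as $\beta$; since $\Br U[p]$ injects into $\frac{\kk(C)^\times}{\kk(C)^{\times p}}\times\frac{\kk(L)^\times}{\kk(L)^{\times p}}$ via its residue map (diagram~\eqref{eq:BigDiagram}), we conclude $\beta=\phi([D])$ and hence $\alpha=\pi^*\beta=\psi([D])$.

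I expect the main obstacle to be the ramification bookkeeping of the last two steps: cleanly isolating the split-versus-inert dichotomy of plane curves under $\pi$, checking that the single symbol $\pcyclic{f}{h}$ clears all ramification outside $C\cup L$ while introducing none, and — the most delicate point — using the reciprocity relation at the points of $L\cap C$ to see that the residue of $\beta$ along $C$ has exactly the shape $pD+n(L\cap C)$, which is precisely what forces $\beta$ into $\im\phi$ rather than merely into $\pi^*\!\bigl(\Br U[p]\bigr)$. The cohomological lifting in the first step (and the observation $p\beta_0=\cores(\alpha)=0$) is routine by comparison.
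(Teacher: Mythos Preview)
Your proof is correct and takes essentially the same approach as the paper, which also lifts via Hochschild--Serre (Lemma~\ref{lem:SurjectionOnInvariants}), subtracts symbol algebras of the form $\pcyclic{f/\ell^{dp}}{\,\cdot\,}$ to land in $\Br U[p]$, and then identifies the class with $\phi([D])$ by reading off residues along $C$ and $L$ (Lemma~\ref{lem:Maximality}). Your reduction to $p$-torsion via $p\beta_0=\cores(\res\beta_0)=\cores(\alpha)=0$ is slightly slicker than the paper's, which instead argues that the kernels of the restriction maps on residue fields are $p$-groups.
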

		\begin{proof}
			This follows immediately from Corollary~\ref{Cringstruct} and the following lemmas, which we prove below.
			\begin{lemma}\label{lem:SurjectionOnInvariants}
				\[
					\Br X[1 - \zeta]\subset 
					\im \pi^* \colon \Br \kk(\PP^2) \to \Br\kk(X).
				\]
		\end{lemma}
		\begin{lemma}\label{lem:Maximality}
			\[
				{\pi^*(\Br \kk(\PP^2)) \cap \Br X =  \im \psi.}
			\]
		\end{lemma}
		\end{proof}
		
		\begin{proof}[Proof of Lemma~\ref{lem:SurjectionOnInvariants}]
			Consider the Hochschild-Serre spectral sequence in group cohomology
			\[
				E_2^{p,q} = \HH^p(\Gal(F'/F), \HH^q(F', F_s^{\times})) \Rightarrow L^{p + q} := \HH^{p+q}(F, F_s^{\times}),
			\]
			where $F'/F$ is a cyclic extension of degree $p$.  By Hilbert's theorem 90 and Tate cohomology, $E_2^{p,q} = 0$ if $q = 1$, or $q = 0$ and $p = 1$ or $p= 3$.  Therefore, $L^2 = \Br F$ surjects onto $\left(\Br F'\right)^{\Gal(F'/F)} = E_2^{0,2} = E_{\infty}^{0,2}$.  Setting $F' = \kk(X)$ and $F = \kk(\PP^2)$, we have a surjection
			\[
				\pi^* \colon \Br \kk(\PP^2) \to (\Br \kk(X))^{\zeta}.
			\]
			We complete the proof by observing that $\Br X[1 - \zeta] \subset (\Br \kk(X))^{\zeta}.$
		\end{proof}
		
		\begin{proof}[Proof of Lemma~\ref{lem:Maximality}]
			From the definition of $\psi$, it is clear that $\im \psi \subset \pi^*(\Br \kk(\PP^2)[p])\cap \Br X[p]$.  Therefore, it remains to prove the reverse containment.
			
			{Let $\calA\in \Br \kk(\PP^2)$ be a Brauer class of exact order $m$ such that $\pi^*\calA\in \Br X$.  Let $Z\subset X$ be an irreducible curve and let $W := \pi(Z)$.  Since $\partial_Z(\pi^*(\calA)) = \pi^*(\partial_W(\calA))$, $\pi^*(\calA)\in\Br X$ if and only if $\partial_W(\calA)$ is in the kernel of the restriction morphism $\HH^1(\kk(W), \Q/\Z) \to \HH^1(\kk(Z), \Q/\Z)$.  However, the kernel of restiction is a $p$-group, thus we may assume that $m = p$.}
			
			Denote the irreducible components of the ramification divisor of $\calA$ that are different from $C$ and $L$ by $W_1, \ldots, W_n$.  Let $g_i$ be a homogeneous polynomial of degree $m_i$ such that $W_i = V(g_i)$.  Since $\pi^*\calA$ is unramified and $W_i\neq C$ or $L$, the residue of $\calA$ at $W_i$ must be equal to $(f/\ell^{dp})^{a_i}$ for some integer $a_i$. Consider the central simple algebra
			\[
				\calB := \calA - \pcyclic{(f/\ell^{dp})^{a_1}}{g_1/\ell^m_1}
				- \pcyclic{(f/\ell^{dp})^{a_2}}{g_2/\ell^m_2}
				- \cdots - \pcyclic{(f/\ell^{dp})^{a_n}}{g_n/\ell^m_n}.
			\]
			From the definition, one can easily see that $\calB \in \Br U$ and that $\pi^*\calA = \pi^*\calB$.  It remains to show that $\calB \in \im \phi$.  Since $\pi^*\calB\in \Br X$, the residue of $\calB$ at $L$ must be equal to $(\tilde\ell^{dp}/f)^{n}$ for some integer $n$.  Additionally, by the exactness of the second row of~\eqref{eq:BigDiagram}, the residue at $C$ must be some function $g$ such that 
			\[
				\divv(g) = pD + n(L\cap C)
			\]
			for some divisor $D$.  Therefore, $\calB = \phi(D)$, which completes the proof.
		\end{proof}

	\subsection{Obtaining elements of $\left(\Pic C/\Z L\right)[p]$ from %
	divisors on $X$}\label{subsec:Definingj}
		To each reduced and irreducible divisor $Z\subset X\setminus\tilde C$, we associate a divisor $D_Z := \pi_*(Z\cap \tilde C)$ in $C$.   Extending the map by linearity, we obtain a homomorphism
		\[
			j' \colon \Div (X\setminus \tilde C) \to \Div C.
		\]
		Let $H = \pi^*L$.
		\begin{lemma}
			The map $j'$ induces a homomorphism
			\[
				j \colon \frac{\Pic X}{\Z H + (1 - \zeta)\Pic X}\longrightarrow
				\left(\Pic C/\Z L\right)[p].
			\]
		\end{lemma}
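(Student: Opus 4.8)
The plan is to replace the divisor‑theoretic description of $j'$ by an equivalent one on Picard groups --- restriction to $\tilde C$ --- and then to check the three properties that make $j$ well defined with the stated target: that the induced map kills $\Z H$ modulo $\Z L$, that it kills $(1-\zeta)\Pic X$, and that its image is $p$‑torsion in $\Pic C/\Z L$. For the reinterpretation I would note that, since $\tilde C$ is irreducible, every class in $\Pic X$ has a representative $D\in\Div X$ with $\tilde C\not\subseteq\supp D$, and for such a $D$ the $0$‑cycle $j'(D)=\pi_*(D\cap\tilde C)$ on $C$ represents the line bundle obtained by restricting $\OO_X(D)$ to $\tilde C$ and transporting it along the isomorphism $\pi|_{\tilde C}\colon\tilde C\xrightarrow{\sim}C$. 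If two such representatives differ by $\divv(h)$, then $\tilde C\not\subseteq\supp\divv(h)$, so $h|_{\tilde C}\in\kk(\tilde C)^\times$ and $j'(\divv h)=\pi_*\divv_{\tilde C}(h|_{\tilde C})$ is principal on $C$; hence $j'$ induces a genuine homomorphism $\bar j\colon\Pic X\to\Pic C$, namely restriction of line bundles to $\tilde C$ followed by transport along $\pi|_{\tilde C}$. The only point to keep track of throughout is exactly this freedom to choose representatives avoiding $\tilde C$, which is what legitimizes the identification.

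Next I would verify the two vanishing statements. Representing $H=\pi^*L$ by $\pi^*L'$ for a general line $L'\neq C$, its restriction to $\tilde C$ is the $\pi|_{\tilde C}$‑pullback of $\OO_{\PP^2}(1)|_C$, which transports back to $\OO_{\PP^2}(1)|_C$, i.e.\ to the hyperplane class on $C$; thus $\bar j(H)\in\Z L$. Since $\tilde C$ is the totally ramified locus of $\pi$, the deck transformation $\zeta$ restricts to the identity on $\tilde C$ (this is already recorded in the statement that $\pi|_{\tilde C}$ is an isomorphism), so $(\zeta^i)^*$ fixes the restriction to $\tilde C$ of every line bundle; hence $\bar j\circ\zeta^*=\bar j$ and $\bar j\bigl((1-\zeta)\Pic X\bigr)=0$. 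Composing $\bar j$ with $\Pic C\twoheadrightarrow\Pic C/\Z L$ therefore produces a homomorphism $j\colon\Pic X/(\Z H+(1-\zeta)\Pic X)\to\Pic C/\Z L$.

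The one step with real content is showing that the image of $j$ lands in $(\Pic C/\Z L)[p]$, and for this I would show that $p$ annihilates the source. The input is that $\sum_{i=0}^{p-1}(\zeta^i)^*$ carries $\Pic X$ into $\pi^*\Pic\PP^2=\Z H$: for a prime divisor $W\neq C$ on $\PP^2$ (so $\pi$ is \'etale over the generic point of $W$) and any component $D$ of $\pi^{-1}(W)$ one has $\sum_i(\zeta^i)^*D=m\,\pi^*W$, where $m$ is the order of the stabilizer of $D$ in $\langle\zeta\rangle$, while $\sum_i(\zeta^i)^*\tilde C=p\tilde C=\pi^*C$; and every prime divisor of $X$ is of one of these two kinds. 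Hence $\Pic X/\Z H$ is a module over $\Z[\zeta]=\Z[t]/(1+t+\cdots+t^{p-1})$, and since $p=\prod_{i=1}^{p-1}(1-\zeta^i)$ with each factor $1-\zeta^i=(1-\zeta)(1+\zeta+\cdots+\zeta^{i-1})$ lying in $(1-\zeta)\Z[\zeta]$, we conclude $p\,\Pic X\subseteq\Z H+(1-\zeta)\Pic X$. Therefore $p\,\bar j(D)=\bar j(pD)\in\bar j\bigl(\Z H+(1-\zeta)\Pic X\bigr)\subseteq\Z L$ for every $D$, so $j$ is valued in $(\Pic C/\Z L)[p]$, completing the proof. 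Everything but this last module computation is routine bookkeeping; the crux is the norm‑type statement that $\sum_i(\zeta^i)^*$ lands in $\pi^*\Pic\PP^2$, combined with $p\in(1-\zeta)\Z[\zeta]$.
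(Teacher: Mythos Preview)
Your proof is correct, and the reinterpretation of $j'$ as restriction of line bundles to $\tilde C$ followed by transport along the isomorphism $\pi|_{\tilde C}\colon\tilde C\to C$ is equivalent to the paper's intersection-theoretic definition and makes the well-definedness on $\Pic X$ and the vanishing on $\Z H$ and $(1-\zeta)\Pic X$ transparent.

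The genuine difference is in the $p$-torsion step. The paper verifies directly on the target: using the projection formula it computes, for an irreducible $Z\subset X\setminus\tilde C$,
\[
pD_Z=\pi_*(Z\cap p\tilde C)=\pi_*(Z\cap\pi^*C)=(\pi_*Z)\cap C\sim\deg(\pi_*Z)\cdot(L\cap C),
\]
so $\overline{[D_Z]}\in(\Pic C/\Z L)[p]$. You instead prove the stronger statement that the \emph{source} $\Pic X/(\Z H+(1-\zeta)\Pic X)$ is already $p$-torsion, via the norm relation $\sum_i(\zeta^i)^*\Pic X\subseteq\pi^*\Pic\PP^2=\Z H$ together with $p\in(1-\zeta)\Z[\zeta]$. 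The paper's route is a one-line application of the projection formula and avoids any discussion of the $\Z[\zeta]$-module structure on $\Pic X/\Z H$; your route requires that extra ingredient but yields an intrinsically useful fact about the source (mirroring Proposition~\ref{Pringaction} for the Brauer group) and makes the landing in the $p$-torsion automatic for any map out of it.
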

		\begin{proof}
			First note that $j'$ certainly sends principal divisors on $X$ supported away from $\tilde C$ to principal divisors on $C$.  Then, after composing $j'$ with the quotient $\Div C \to \Pic C$, we may extend the map to $\tilde C$ by finding a linearly equivalent divisor with disjoint support.  Thus, we have a homomorphism from $\Pic X \to \Pic C$.  It is clear from the definition that $j'((1 - \zeta)Z) = 0$, so we may also take the quotient of the domain by $(1 - \zeta)\Pic X$.  Next, we observe that by the projection formula (see, e.g., \cite{Liu}, p.\ 399),
			\[
				L\cap C = L \cap \pi_*(\tilde C) = \pi_*(\pi^*L \cap \tilde C) = \pi_*(H \cap \tilde C).
			\]
			Hence if we quotient $\Pic X$ by $\Z H$ and quotient $\Pic C$ by $\Z L$, the map stays well-defined.
			
			Finally, we show that $\overline{[D_Z]}$ is contained in $\left(\Pic C/\Z L\right)[p]$.  The projection formula implies
			\[
				pD_Z = \pi_*(Z\cap p\tilde C) = \pi_*(Z\cap\pi^*C) = 
				(\pi_*Z)\cap C.
			\]
			However, $\pi_*Z \equiv \deg(\pi_*(Z))\cdot L$.  Thus, $pD_Z - \deg(\pi_*(Z))\cdot (L\cap C)$ is principal on $C$, and so $\overline{[D_Z]}\in \left(\Pic C/\Z L\right)[p]$.
		\end{proof}

	\subsection{Proof of Theorem~\ref{thm:residueBr}}\label{subsec:Proof}
	
			{The commutativity follows from Proposition~\ref{prop:kerToBr} and the surjectivity of $\psi$ follows from Proposition~\ref{prop:surj}.  It remains to prove exactness in the middle and injectivity of $j$.  We do so by proving that
			\[
				\im j = \ker (\pi^*\circ \phi),\quad\textup{ and }\quad
				\ker (\phi\circ j) = 0
			\]
			and then using Proposition~\ref{prop:kerToBr} to conclude that $\im j = \ker \psi$ and that $\ker j = 0$.}  To do this, we first prove a preliminary lemma:
			\begin{lemma}\label{lem:phi'}
				Let $Z = Z_1 - Z_2\in \Div(X)$ with $Z_i$ effective, let $g_i$ be a homogeneous polynomial such that $V(g_i) = \pi_*(Z_i)$, set $n := \deg(g_1) - \deg(g_2)$ and set $g = g_1/g_2$.  Then
				\[
					\phi'(j(Z)) = \left(g/\ell^n, -n\right).
				\]
			\end{lemma}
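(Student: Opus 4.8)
The plan is to unwind both sides of the claimed identity directly from the defining recipes for $j$ and $\phi'$, converting the divisor $D_Z$ on $C$ into a principal‑divisor computation on $\PP^2$ via the projection formula. Recall that $j(Z)$ is represented by $D_Z := \pi_*(Z\cap\tilde C)\in\Div C$ and that, by the definition of $\phi'$ (and of $g_D$, $n_D$), we have $\phi'(j(Z)) = (g_{D_Z}, n_{D_Z})$, where $g_{D_Z}$, $n_{D_Z}$ are the unique data satisfying $\divv_C(g_{D_Z}) = pD_Z + n_{D_Z}(L\cap C)$, with $g_{D_Z}$ determined only up to $k^\times\subseteq\kk(C)^{\times p}$. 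So it suffices to show that $h := g/\ell^n$, restricted to $C$, is a legitimate choice of $g_{D_Z}$ with corresponding integer $-n$.

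First I would arrange, as in the construction of $j$ in Section~\ref{subsec:Definingj}, that $Z_1$ and $Z_2$ (hence $Z$) are supported away from $\tilde C$; then each $\pi(Z_i)$ is a union of irreducible curves of $\PP^2$, none of which is $C$, so $g_i$ does not vanish identically on $C$, and neither does $\ell$ since $L\neq C$. Hence $h = g_1/(g_2\ell^n)$, a ratio of homogeneous polynomials whose degrees differ by $n$, is a well-defined element of $\kk(\PP^2)^\times$ that restricts to a nonzero rational function on $C$, and
\[
	\divv_{\PP^2}(h) = V(g_1) - V(g_2) - nL = \pi_*Z_1 - \pi_*Z_2 - nL = \pi_*Z - nL.
\]
Restricting this relation to $C$ — legitimate because $C$ is not a component of $\divv_{\PP^2}(h)$ — gives
\[
	\divv_C(h|_C) = (\pi_*Z\cap C) - n(L\cap C).
\]

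The crux is to evaluate $\pi_*Z\cap C$ using the projection formula (see, e.g., \cite{Liu}, p.\ 399) together with the identity $\pi^*C = p\tilde C$, which holds because $X\to\PP^2$ is the $p$-cyclic cover branched along $C$ (locally $\pi^*f = y^p$):
\[
	\pi_*Z\cap C = \pi_*(Z\cap\pi^*C) = \pi_*(Z\cap p\tilde C) = p\,\pi_*(Z\cap\tilde C) = pD_Z,
\]
where we use that $Z$ and $\tilde C$ share no component so the intersection product is defined. Substituting back yields $\divv_C(h|_C) = pD_Z - n(L\cap C)$. By the uniqueness of $g_{D_Z}$ and $n_{D_Z}$ recalled above, we conclude $g_{D_Z}\equiv h|_C \pmod{\kk(C)^{\times p}}$ and $n_{D_Z} = -n$, whence $\phi'(j(Z)) = (g/\ell^n, -n)$, as claimed. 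I do not expect a genuine obstacle here; the only points requiring care are the support/transversality bookkeeping that makes the intersections and the restriction to $C$ legitimate, the normalization $\pi^*C = p\tilde C$, and tracking the sign so that one lands on $n_{D_Z} = -n$ rather than $+n$.
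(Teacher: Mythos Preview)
Your proof is correct and follows essentially the same approach as the paper's: both reduce to verifying $\divv_C(g/\ell^n) = pD_Z - n(L\cap C)$ via the projection formula and the identity $\pi^*C = p\tilde C$. The paper's proof is simply a terser version of yours, writing only that $p\pi_*(Z\cap\tilde C) = \pi_*(Z\cap\pi^*C) = \divv_C(g)$ and leaving the remaining bookkeeping (support away from $\tilde C$, restriction to $C$, matching signs) implicit.
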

			\begin{proof}
				Unravelling the definitions, it suffices to show that 
				\[
					\divv(g/\ell^n) \sim p\pi_*((Z_1 - Z_2) \cap \tilde C) - n(L \cap C)
				\]
				as divisors on $C$.   The first term on the right-hand side is equal to $\pi_*((Z_1 - Z_2) \cap \pi^*C)$, which is equal to 
				$\divv(g)$ (as a divisor on $C$) by the projection formula.  The lemma follows immediately.  
			\end{proof}
		
			Let us resume the proof of the theorem.  Let $Z\in\Div(X)$ and let $g, n$ be as in the lemma.  We claim that $\phi(j(Z))$ is equal to the $p$-cyclic algebra $\calA := \pcyclic{\tilde\ell^{dp}/f}{g/\ell^n}$.  To prove this, it suffices to compare the residues of $\calA$ to the residues of $\phi(j(Z))$.  By Theorem~\ref{thm:res}, the residue of $\calA$ at $C$ is equal to $g/\ell^n$, the residue of $\calA$ at $L$ is $\left(f/\tilde\ell^{dp}\right)^n$, and the residue of $\calA$ at $W_i$, a component of $\pi_*(Z)$ of multiplicity $m_i$ is $(\tilde\ell^{dp}/f)^{m_i}$; the residues at all other curves are trivial.  Observe that if $m_i$ is nonzero modulo $p$, then a component of $Z$ must map isomorphically onto $W_i$.  This then implies that $\tilde\ell^{dp}/f$ is a $p^{th}$ power in $\kk(W_i)$.  Thus, all components of $\pi_*(Z)$ have trivial residue.  By Lemma~\ref{lem:phi'} and the proof of Proposition~\ref{prop:kerToBr}, $\phi(j(Z))$ has residue $g/\ell^n$ at $C$, residue $\left(\tilde\ell^{dp}/f\right)^{-n}$ at $L$, and trivial residue elsewhere.   Thus, $\phi(j(Z)) = \calA$, which is plainly in $\ker \pi^*$, so $\im j \subset \ker \pi^*\circ \phi.$ 
			
			Now assume that $\phi(j(Z)) = \calA$ is trivial in $\Br U$.  Then $g/\ell^n = \Norm_{\kk(X)/\kk(\PP^2)}(\gamma)$ for some $\gamma\in \kk(X)^\times$ by Proposition \ref{norm} (iv). Equivalently, $\pi_*Z - nL\in \pi_*\textup{Princ}(X)$.  Therefore, $[Z]\in \Z H + (1 - \zeta)\Pic X$ and $\phi \circ j$ is injective.
			
			Now let $\calB \in \im \phi\cap \ker \pi^* \subset \Br U$.  Since $\pi^*\calB = 0$ and $\kk(X)/\kk(\PP^2)$ is a cyclic extension, $\calB$ must be of the form $\pcyclic{f/\tilde\ell^{dp}}{h}$ for some function $h \in \kk(X)^{\times}$.  Furthermore, by Proposition~\ref{prop:BrauerFacts}, we  may multiply $h$ by a power of $f/\tilde\ell^{dp}$ to assume that $v_C(h) = 0$.  On the other hand, since $\calB\in \im \phi$, for all prime divisors $W\subset \PP^2$ different from $L$, either $v_W(h) \equiv 0 \bmod p$ or $f/\tilde\ell^{dp}\in\kk(W)^{\times p}.$  Thus $\divv(h) = \pi_*(Z) - nL$ for some divisor $Z$ on $X$ and some integer $n$.  This implies that $\calB\in \im j$, which completes the proof.\qed

	\subsection{Alternative proof of surjectivity when $p = 2$} 
	\label{subsec:Surjectivity2}
		When $p = 2$ we can give a brief alternate proof of the surjectivity of $\psi$ with a cardinality argument as follows.  By Corollary~\ref{cor:Brdim},
		\[
			\dim_{\F_2}\Br X[2] = 1 - \rho + (p - 1)\left(1 + 2g(C)\right) = 2 + 2g(C) - \rho.
		\]
		We also have that 
		\[
			\dim_{\F_2}\left(\Pic C/\Z L\right)[2] = 2g(C) + 1, 
			\quad\textup{and}\quad
			\dim_{\F_2}\Pic X/(\Z H + 2\Pic X) = \rho - 1.
		\]
		Then surjectivity follows since $2 + 2g(C) - \rho + \rho - 1 = 2g(C) + 1$.\qed
		{
		\begin{remark}
			The above argument can be strengthened to show that, for odd $p$, $\Br X[1 - \zeta] \subsetneq \Br X[p]$.  To do so, first note that $\dim_{\F_p}\Pic X/(\Z H + (1 - \zeta)\Pic X)$ is always bounded below by $(\rho - 1)/(p - 1)$.  Therefore,
		\[
			\dim_{\F_p}\frac{\Br X[p]}{\Br X [1 - \zeta]} \geq 
			(p-2)\left(1 + 2g(C) - \frac{\rho - 1}{p - 1}\right).
		\] 
		Then, by Proposition~\ref{prop:NumericalInvariants} $p_g = h^{2,0} >0$, so $\rho$ is strictly less than $b_2$, and thus $1 + 2g(C) > \frac{\rho - 1}{p - 1}$. Hence for odd $p$, $\dim_{\F_p}\frac{\Br X[p]}{\Br X (1 - \zeta)}$ is strictly positive.
		\end{remark}}
	
\section{Geometric constructions for Brauer elements in the case $p = 2$} \label{sec:theta}

		Henceforth we restrict to the case that $p = 2$.  We allow $C$ to be a curve of arbitrary degree $e$, not necessarily even.  

		In this section we will describe a method that, given an element of $(\Pic C/\Z L)[2]$, constructs a class in $\Br \kk(\P^2)[2]$. 
		In the case that $e = 2d$ is even, our construction even yields a class in $\Br X[2]$ where $\pi \colon X \rightarrow \P^2$ is the double cover
ramified on $C$.  In any case, the Brauer class on $\kk(\PP^2)$ will be ramified only on $C$ and possibly $L$, thus giving an element of $\Br U[2].$
	
		The method works over any ground field of characteristic prime to $2$ over which the relevant objects are defined.  Thus, throughout, we will let $\calL$ be a line bundle on $C$ whose class in $\Pic C/\Z L$ is $2$-torsion, and let $k_0$ be any field over which $C$ and $\calL$ are defined
		and such that $\Char(k_0) \neq 2$.  Let $k$ denote a separable closure of $k_0$; this is compatible with our previous notation.

\subsection{Symmetric resolution on line bundles}\label{Ssymm}
To begin the construction we form a symmetric resolution of $\calL$ by line
bundles on $\P^2$.

Since the class of $\L$ in $\Pic C/\Z L$ is $2$-torsion, $\L \otimes \L \simeq
O_C(\e L)$ where $\e$ is some integer.  
By twisting $\L,$ we can
and do assume that $\e =0$ or $1$. 
We will use the following result of Catanese~\cite{Cat}, which is also
described in \cite{Cat97}.  The theorem is stated for algebraically closed 
fields of characteristic 0 in the above citations,  but the proof works for any field of characteristic not 2.
\begin{thm}[{\cite[Prop 2.28]{Cat}}]\label{thm:Catanese}
Let $C$ be a smooth curve in $\P^2$ and let $\L$ be a line bundle on
$C$ such that $\L \otimes \L \simeq \O_C(\e)$ where $\e=0$ or $1$. 
 Then there is a
symmetric resolution of $\L$ as a sheaf on $\P^2$ by line bundles.
More precisely we have:
$$ \xymatrix{0 \rightarrow \displaystyle\bigoplus_{i=1}^n \O_{\P^2}(a_i - e + \e) \stackrel{M}{\longrightarrow}
\displaystyle\bigoplus_{i=1}^n \O_{\P^2}(-a_i) \rightarrow \L \rightarrow 0}$$
where $M=(m_{ij})$ is a $n\times n$ symmetric matrix with
$\deg(m_{ij}) = e - a_i - a_j - \e$ and $a_i \geq 0$.
\end{thm}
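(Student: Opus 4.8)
The plan is to build the resolution in three movements: \emph{(i)} resolve $\L$, viewed as a coherent sheaf on $\PP^2$, by a length-one complex of finite sums of line bundles; \emph{(ii)} use Grothendieck duality together with $\L\otimes\L\isom\O_C(\e)$ to see that the transpose of this resolution again resolves $\L$, forcing the shifts and degrees into the asserted shape; and \emph{(iii)} use $\Char(k_0)\neq 2$ to replace the resolution by a genuinely symmetric one.

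\emph{Movement (i).} Since $C$ is a smooth divisor in $\PP^2$, the sheaf $\L$ has homological dimension one over $\O_{\PP^2}$ (locally $\L\isom\O_C$, resolved by the Koszul complex of the defining equation); equivalently, the saturated graded module $\bigoplus_n\HH^0(\PP^2,\L(n))$ over $S:=k_0[x_0,x_1,x_2]$ has depth two, hence projective dimension one. Taking its minimal graded free resolution and sheafifying yields a short exact sequence
\[
	0\to E_1\xrightarrow{M}E_0\to\L\to 0,\qquad E_0=\bigoplus_{i=1}^{n}\O_{\PP^2}(-a_i),
\]
with $E_0,E_1$ finite direct sums of line bundles on $\PP^2$. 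As $\deg\L=\tfrac12\e e$, the bundle $\L(n)$ has negative degree (hence no global sections) for $n<0$, so all minimal generators of $\bigoplus_n\HH^0(\PP^2,\L(n))$ occur in nonnegative degrees, i.e.\ $a_i\ge 0$.

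\emph{Movement (ii).} Apply $\cHom_{\O_{\PP^2}}(-,\O_{\PP^2}(\e-e))$ to the resolution. Since $\L$ has codimension one and the $E_i$ are locally free, the only surviving sheaf $\mathcal{E}xt$ is $\mathcal{E}xt^1_{\O_{\PP^2}}(\L,\O_{\PP^2}(\e-e))$, which Grothendieck duality for the regular embedding $C\into\PP^2$, together with the adjunction formula $\omega_C=\O_C(e-3)$, identifies with $\cHom_{\O_C}(\L,\O_C(\e))=\L^{-1}\otimes\O_C(\e)$; the hypothesis $\L\otimes\L\isom\O_C(\e)$ makes this isomorphic to $\L$. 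Thus $M^{\mathsf T}$ sits in a second, again minimal, short exact sequence $0\to E_0^{\vee}(\e-e)\xrightarrow{M^{\mathsf T}}E_1^{\vee}(\e-e)\to\L\to 0$. Uniqueness of minimal graded free resolutions gives $E_1^{\vee}(\e-e)\isom E_0$ and $E_0^{\vee}(\e-e)\isom E_1$, so $E_1\isom\bigoplus_i\O_{\PP^2}(a_i-e+\e)$ and $\deg m_{ij}=e-a_i-a_j-\e$: exactly the shape claimed.

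\emph{Movement (iii).} It remains to promote ``$M$ and $M^{\mathsf T}$ present $\L$ isomorphically'' to ``$M$ may be chosen symmetric''. The comparison isomorphism between the resolution of Movement (i) and its transpose covers a duality isomorphism $\theta\colon\L\xrightarrow{\sim}\mathcal{E}xt^1_{\O_{\PP^2}}(\L,\O_{\PP^2}(\e-e))$, and the crucial point is that $\theta$ is \emph{symmetric}: the pairing $\L\otimes\L\to\L\otimes\L\isom\O_C(\e)$ underlying it is the multiplication map, which for a \emph{line} bundle has no alternating part, so the skew-symmetric alternative cannot occur. Because $\theta$ is symmetric, a chain-level lift of $\theta$ and the transpose of that lift both lift $\theta$; as $2\in k_0^{\times}$ their average is again a lift of $\theta$, hence (a map of minimal resolutions lifting an isomorphism, so) an isomorphism of complexes, and it is now self-dual. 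Composing $M$ with the degree-$0$ component $\theta_0\colon E_0\xrightarrow{\sim}E_1^{\vee}(\e-e)$ of this self-dual lift replaces $M$ by an honest symmetric matrix $M'$ with $\coker M'\isom\L$ and with entries of the stated degrees. I expect this movement --- the symmetry of $\theta$, and the averaging step, which genuinely requires $\Char(k_0)\neq 2$ --- to be the main obstacle; Movements (i) and (ii) are formal once the bookkeeping of shifts and degrees is recorded.
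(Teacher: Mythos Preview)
The paper does not give its own proof of this statement: it is quoted as Catanese's result \cite[Prop.~2.28]{Cat} (see also \cite{Cat97}), with only the remark that the original proof, stated over algebraically closed fields of characteristic~$0$, goes through for any field of characteristic $\ne 2$. So there is no paper-internal argument to compare against.

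That said, your three-movement sketch is essentially Catanese's own argument and is correct. A couple of minor comments: in Movement~(ii) the adjunction formula $\omega_C=\O_C(e-3)$ is a red herring---what you actually use is the normal bundle $\O_C(C)=\O_C(e)$ in the identification $\mathcal{E}xt^1_{\O_{\PP^2}}(\L,\O_{\PP^2})\cong \L^{\vee}\otimes\O_C(e)$, after which the twist by $\e-e$ and the hypothesis $\L^{\otimes 2}\cong\O_C(\e)$ give $\L$ back. In Movement~(iii), your symmetry claim for $\theta$ is right for the right reason: a nondegenerate pairing on a rank-one sheaf is locally a unit times the multiplication map, hence symmetric; the averaging trick then needs exactly $2\in k_0^{\times}$, and the averaged lift remains an isomorphism because any chain map between \emph{minimal} free resolutions lifting an isomorphism is an isomorphism (Nakayama). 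Finally, note that after composing, your symmetric $M'=\theta_0\circ M$ is a map $E_1\to E_1^{\vee}(\e-e)$ rather than $E_1\to E_0$; this is harmless since Movement~(ii) already identified $E_1^{\vee}(\e-e)\cong E_0$, but it is worth saying explicitly.
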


\begin{cor}
	We have that $V(\det M) = C$ and the degree sequence $(d_i)=(\deg(m_{ii}))_{i = 1}^n$ gives a partition of $e$ into $n$ parts, each of which has the same parity.
\end{cor}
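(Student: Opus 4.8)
The plan is to derive both assertions from an analysis of the single form $\det M$. Write $E:=\bigoplus_{i=1}^n\O_{\P^2}(a_i-e+\e)$ and $F:=\bigoplus_{i=1}^n\O_{\P^2}(-a_i)$, so that the resolution of Theorem~\ref{thm:Catanese} exhibits $\L$ (viewed as an $\O_{\P^2}$-module via the inclusion $C\hookrightarrow\P^2$) as the cokernel of an injective map $M\colon E\to F$ between locally free sheaves of the same rank $n$. First I would record that $\det M\neq 0$: since $\P^2$ is integral, restricting to its generic point turns $M$ into an injective $\kk(\P^2)$-linear endomorphism of an $n$-dimensional space, hence an isomorphism, so $\det M$ is nonzero there. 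Next, regard $\det M$ as a homogeneous form in the coordinates of $\P^2$; expanding by Leibniz, each nonzero term $\prod_i m_{i\sigma(i)}$ has degree $\sum_i(e-a_i-a_{\sigma(i)}-\e)=ne-n\e-2\sum_i a_i$, independent of $\sigma$, and this common degree equals $\sum_i d_i$ since $d_i=\deg(m_{ii})=e-2a_i-\e$. So it remains to identify $V(\det M)$ with $C$ and to see that $\deg(\det M)=e$.

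The key geometric input is that the ideal sheaf locally generated by $\det M$ is the $0$-th Fitting ideal $\operatorname{Fitt}_0(\L)\subseteq\O_{\P^2}$, because $M$ is a \emph{square} presentation matrix of $\L$. Since $C$ is smooth, Zariski-locally on $\P^2$ near a point of $C$ one has $\L\simeq\O_C=\O_{\P^2}/(g)$ with $g$ a local equation of $C$, so $\operatorname{Fitt}_0(\L)=\calI_C$ there; away from $C$ the module $\L$ vanishes and $\operatorname{Fitt}_0(\L)=\O_{\P^2}$. Hence $(\det M)=\calI_C$ as ideal sheaves, i.e. $V(\det M)=C$ scheme-theoretically; since the only units on $\P^2$ are nonzero constants, $\det M$ must be a nonzero scalar multiple of the degree-$e$ defining form of $C$, so $\deg(\det M)=e$. (If one prefers to avoid Fitting ideals: localize at the generic point $\eta_C$ of $C$, where $\O_{\P^2}$ is a discrete valuation ring with uniformizer $g$; Smith normal form for the injective matrix $M_{\eta_C}$, whose cokernel is $\L_{\eta_C}\cong\O_{\P^2,\eta_C}/(g)$, forces elementary divisors $(1,\dots,1,g)$, so $\ord_C(\det M)=1$, and combined with the vanishing of $\det M$ only along $C$ one again gets $\det M=c\,g$.)

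Combining the degree computation with $\deg(\det M)=e$ yields $\sum_i d_i=e$. For the parity claim, observe that $d_i=e-2a_i-\e\equiv e-\e\pmod 2$ for every $i$, so all of the $d_i$ have the common parity of $e-\e$. Finally each $d_i\ge 0$, equivalently $a_i\le(e-\e)/2$, which is part of the output of Theorem~\ref{thm:Catanese}; thus $(d_i)$ is a partition of $e$ into $n$ parts of equal parity, which is the assertion.

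The one genuinely non-formal ingredient is the identification of the ideal generated by $\det M$ with $\calI_C$ — equivalently, knowing that $\det M$ cuts out $C$ \emph{reduced}, rather than with some embedded or multiple structure — and this is precisely where smoothness of $C$ (regularity of $\O_{\P^2}$ at $\eta_C$, making $\L$ cyclic there) enters; I expect that to be the main point to pin down carefully. The only other non-bookkeeping step is the non-negativity of the parts, which must be read off from Catanese's construction (or, if one works instead with a minimal resolution, from the graded minimality); everything else is elementary degree counting on $\det M$.
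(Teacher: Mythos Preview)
Your argument is correct; the paper's own proof reads in its entirety ``This is an immediate consequence of Catanese's result,'' and your Fitting-ideal identification of $(\det M)$ with $\calI_C$ together with the degree and parity bookkeeping is exactly how one unpacks that sentence. You are also right to flag the non-negativity $d_i\ge 0$ as the one point not purely formal: the paper's statement of Theorem~\ref{thm:Catanese} only records $a_i\ge 0$, so the bound $a_i\le (e-\e)/2$ must indeed be read off from Catanese's actual construction (minimality of the symmetric resolution) rather than from the theorem as quoted.
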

\begin{proof}
	This is an immediate consequence of Catanese's result.
\end{proof}

\begin{proposition}\label{Pdimcount}

Let $\mathbb L$ be the moduli space of pairs $(C,\L)$ of smooth planar
  curves $C$ of degree $e$ with $\L \in
  (\Pic C/\Z L)[2]$ such that the symmetric resolution of $\L$
  yields a fixed partition $e=d_1+\cdots+d_n$.  Recall that $a_i=(e-d_i-\e)/2$.
Then
\[
\dim\mathbb{L} =  \sum_{1 \leq i \leq j \leq n} \binom{e-a_i-a_j-\e+2}{2} - \sum_{1
  \leq i , j \leq n} \binom{a_i-a_j+2}{2} -8.
  \]
The partitions $e=1+\cdots+1$,
$e=2+\cdots+2$, $e=3+1+\cdots+1$, and the trivial partition $e=e,$ occur for
generic curves $C$.  All other partitions only occur for special planar
curves of degree $e$.

\end{proposition}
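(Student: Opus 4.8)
The plan is to realise $\mathbb{L}$, for the fixed partition $\lambda=(d_1,\dots,d_n)$ of $e$ (with $\e\in\{0,1\}$ and $a_i=(e-d_i-\e)/2$ as in the statement), as a quotient of a space of symmetric matrix factorisations and to read the dimension off a parameter count. Put $E_\lambda:=\bigoplus_{i=1}^n\O_{\P^2}(-a_i)$, and let $\mathbb{A}:=H^0(\P^2,(\Sym^2 E_\lambda)(e-\e))$ be the space of symmetric $\O_{\P^2}$-linear maps $E_\lambda^\vee(\e-e)\to E_\lambda$. Since $\Sym^2 E_\lambda=\bigoplus_{i\le j}\O_{\P^2}(-a_i-a_j)$, we get $\dim\mathbb{A}=\sum_{1\le i\le j\le n}\binom{e-a_i-a_j-\e+2}{2}$. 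The group $G:=\Aut(E_\lambda)\times\PGL_3$ acts on $\mathbb{A}$ by the natural (congruence) action of $\Aut(E_\lambda)$ on $\Sym^2 E_\lambda$ together with the natural action of $\PGL_3$ on $\P^2$; as $\Aut(E_\lambda)$ is dense open in $\End(E_\lambda)=H^0(\P^2,E_\lambda^\vee\otimes E_\lambda)$, its dimension is $\sum_{1\le i,j\le n}\binom{a_i-a_j+2}{2}$, so $\dim G=\sum_{1\le i,j\le n}\binom{a_i-a_j+2}{2}+8$. By Theorem~\ref{thm:Catanese} and the Corollary following it, every $(C,\L)$ with partition $\lambda$ equals $(V(\det M),\coker M)$ for some $M\in\mathbb{A}$ cutting out a smooth curve; and, using that the minimal symmetric resolution of $\L$ is unique up to isomorphism, the fibres of the induced map $\mathbb{A}^\circ\to\mathbb{L}$ (with $\mathbb{A}^\circ\subseteq\mathbb{A}$ the open locus where $\det M$ defines a smooth curve) are single $G$-orbits. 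Granting that $\mathbb{A}^\circ$ is dense and that the generic $M$ has finite $G$-stabiliser, this yields $\dim\mathbb{L}=\dim\mathbb{A}-\dim G$, which is the asserted formula.

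Two technical inputs feed this count. First, $\mathbb{A}^\circ\ne\emptyset$ (hence dense, since $\mathbb{A}$ is irreducible): the generic $M\in\mathbb{A}$ should have $V(\det M)$ smooth of degree $e$, which I would verify by a Bertini-type analysis along the corank filtration of $M$ — smoothness at the corank-one points follows from genericity of the entries, and the degree constraints make the corank-$\ge 2$ locus empty for generic $M$. Second, the generic $G$-stabiliser is finite: a graded endomorphism of $E_\lambda$ is determined by its value at the generic point of $\P^2$, so the $\Aut(E_\lambda)$-part of a stabiliser injects into the orthogonal group over $\kk(\P^2)$ of the quadratic form attached to $M$, and the polynomial identity cutting out this stabiliser forces it to be finite for generic $M$; the $\PGL_3$-part lies in $\Aut(C)$, which is finite for $e\ge4$ (and $e\le 3$ is handled directly). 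I expect checking this finiteness, uniformly over all $\lambda$, to be the main obstacle in establishing the dimension formula.

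For the statement about generic curves, I would compose with the forgetful map $\mathbb{L}\to\mathcal{M}_e$ to the (irreducible) moduli space of smooth plane curves of degree $e$. Its fibres are finite, each being contained in $(\Pic C/\Z L)[2]$, so $\dim\mathbb{L}\le\dim\mathcal{M}_e=\binom{e+2}{2}-9$, with equality precisely when the image of $\mathbb{L}$ is dense in $\mathcal{M}_e$ — equivalently, when the partition $\lambda$ is realised by all $C$ in a dense open subset. It therefore suffices to show that the dimension formula equals $\binom{e+2}{2}-9$ exactly for $\lambda\in\{(e),(1^e),(2^{e/2}),(3,1^{e-3})\}$ — as far as these are valid partitions of $e$ into equal-parity parts — and is strictly smaller for every other such partition. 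Evaluating the formula at the four partitions is a short computation; for $(e)$ the conclusion is anyway immediate, since $\L=\O_C$ realises it for all $C$, and for the remaining three the nonemptiness of $\mathbb{L}$ is already part of the Bertini input above, so density follows from the numerical equality.

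What remains is the combinatorial inequality: for every valid partition of $e$ into equal-parity parts other than the four above,
\[
\sum_{1\le i\le j\le n}\binom{e-a_i-a_j-\e+2}{2}-\sum_{1\le i,j\le n}\binom{a_i-a_j+2}{2}<\binom{e+2}{2}-1 ,
\]
with equality for those four. Expanding the binomials turns the left-hand side into a quadratic expression in the $a_i$ (equivalently in the parts $d_i$), so the inequality amounts to a comparison of $\lambda$ with its extremal shapes; this is not a single monotone move but a structural case analysis, and it is precisely the sort of combinatorial statement to which the appendix is devoted. I expect this inequality to be the principal obstacle for this half of the proposition. Granting it, every non-listed partition has $\dim\mathbb{L}<\dim\mathcal{M}_e$, so its image in $\mathcal{M}_e$ lies in a proper closed subvariety — i.e.\ it occurs only for special plane curves — while Theorem~\ref{thm:Catanese} guarantees that every $(C,\L)$ realises some partition, so for $C$ outside a proper closed subvariety all of its $2$-torsion classes realise one of the four listed partitions.
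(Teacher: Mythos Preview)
Your proposal is correct and follows essentially the same approach as the paper: realise $\mathbb{L}$ as the quotient of the space of symmetric matrices by the group $\Aut(E_\lambda)$ together with $\PGL_3$, read off the dimension formula by subtracting dimensions, and then reduce the statement about generic curves to the combinatorial inequality handled in the appendix (Proposition~\ref{Pcombinatorics}). You are in fact more explicit than the paper about the two technical inputs --- nonemptiness of $\mathbb{A}^\circ$ and finiteness of the generic stabiliser --- which the paper's proof dispatches in a single sentence (via finiteness of $\Aut(C)$ for $e\geq 4$ and finiteness of $\mathbb{L}$ over the moduli of plane curves) or leaves implicit.
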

\begin{proof}
Let $V =\bigoplus \OO_{\P^2}(-a_i)$ and 
let $p \colon \P_{\P^2}(V) \rightarrow \P^2$ be the projection.
Let $\Aut_{\P^2}\P(V) =\{ \psi \in \Aut \P(V) \st p \circ \psi = p \}$
be the automorphisms that act fibrewise. 
The first statement comes from counting parameters for the matrices
$M$ and subtracting the dimension of automorphisms $\dim \Aut\P_{\P^2}(V) =
\dim \Aut_{\P^2}(\P(V)) \rtimes \Aut \P^2$.  That is,
\[
 \dim \mathbb{L} = \sum_{1 \leq i \leq j \leq n} \hh^0(\O(\deg (m_{ij}))) -1  - \dim \Aut_{\P^2}\P(V) -\dim \Aut \P^2.
 \]
Since $\mathbb{L}$ is finite
over planar curves and smooth planar curves of degree $\geq 4$ have
finite automorphism groups, this yields the dimension of the moduli space.

Let $|C|$ be the dimension of the moduli space of plane curves of degree $e$.  By Proposition~\ref{Pcombinatorics},
\[
	\dim \mathbb{L} \leq \dim |C| - \dim \Aut \PP^2 = (\binom{e+2}{2} - 1) - 8,
\]
with equality if and only if the partition is $e=1+\cdots+1$,
$e=2+\cdots+2$, $e=3+1+\cdots+1$, or the trivial partition $e=e.$ 
\end{proof}

\subsection{Constructing Brauer classes}\label{subsec:scrAmaps}
{Henceforth, we will consider a fixed nontrivial line bundle $\calL$ such that $\L \otimes \L \simeq \O_C(\e)$ for $\e\in\{0,1\}$, and then let  $M$ and $a_i$ be as in Theorem~\ref{thm:Catanese}.  Since $\calL$ is nontrivial, $n\geq 2$.  We note that $\e, M, a_i$ depend on the choice of $\calL,$ despite the lack of dependence in the notation.  {After possibly reordering, we will assume that $\deg{m_{ii}}$ is a weakly decreasing sequence}}

	Write 
	\[
		V=\bigoplus_{i=1}^n \O_{\P^2}(a_i-e+\e).
	\]
	Since $M \colon V \rightarrow V^* (\e - e)$ is symmetric, we can interpret $M$ as a quadratic form $M \colon \Sym V \rightarrow \O_{\P^2}(\e-e)$.  Equivalently, we may consider the subvariety $V(x^tMx) \subset \P_{\P^2}(V)$ which gives a
quadric bundle over $\P^2$.  
\begin{example}\label{ex:GenericPartitions}\ \\
	\vspace{-2mm}
	\begin{enumerate}
	\item If $M$ corresponds to a partition into $n$ equal parts, then the subvariety is a hypersurface of degree $(2, e/n)$ in $\P^{n-1} \times \P^2$.  Projection onto the second factor gives the quadric bundle over $\PP^2$.  
	\item The partition $e = 3 + 1 + \cdots + 1$ yields a matrix $M$ with degrees
\[
	\begin{pmatrix} 
		3 & 2 & 2& \cdots & 2 \\
		2 & 1 & 1 & \cdots & 1 \\
		2 & 1 & 1 & \cdots & 1 \\
		\vdots & \vdots & \vdots &\ddots& \vdots \\
		2 & 1 & 1 & \cdots & 1 
	\end{pmatrix}
\]
Let $x,y,z$ be coordinates on our base $\P^2$ and let $Y\subset \PP^{e-1}$ be the cubic $(e-2)$-fold cut out by the vanishing of $v^tMv$ where $v = (1,v_0,\dots,v_{e-4})$ and $\P^{e-1}$ has coordinates $v_0,\dots,v_{e-4},x,y,z$.  Projection from the linear space $\Pi = V(v_0, \ldots, v_{e-4})$ gives rise to a quadric bundle
\[
	\Bl_\Pi Y \to \PP^2.
\]
	\end{enumerate}
\end{example}	

If $\e-e$ is even we can twist by $(\e - e)/2$ to obtain a quadratic form that
takes values in $\O_{\P^2}$.  If $\e - e$ is odd, then we may twist by $(\e - e - 1)/2$ and trivialize $\OO_{\P^2}(1)$ over $\A^2 := \PP^2\setminus L$ to obtain a quadratic form with values in $\O_{\A^2}$.  In both cases, the quadratic form is nondegenerate away from $C$.

	Given a quadratic form $M$ valued in $\OO_{\PP^2}$ or $\OO_{\A^2}$, we may consider the sheaf of Clifford algebras $\mathscr{C}l(M)$ and the sheaf of even Clifford algebras $\mathscr{C}l_0(M)$.  If $n$ is even, then $\mathscr{C}l(M)$ defines an element in $\Br U \subset \Br \kk(\PP^2)$.  If $n$ is odd, then  $\mathscr{C}l_0(M)$ defines an element in $\Br U \subset \Br \kk(\PP^2)$.  In order to avoid making different statements in the case that $n$ is odd or even, we define:
	\begin{defn}
		\[
			\mathscr{A}_U(M) := \begin{cases}
				\mathscr{C}l(M) & \textup{if }n\textup{ is even},\\
				\mathscr{C}l_0(M) & \textup{if }n\textup{ is odd},
			\end{cases}
			\quad\textup{ and, if }e\textup{ is even,}\quad
			\mathscr{A}_X(M) := \begin{cases}
				\mathscr{C}l_0(M) & \textup{if }n\textup{ is even},\\
				\mathscr{C}l(M) & \textup{if }n\textup{ is odd}.
			\end{cases}
		\]
	\end{defn}
	Note that if $e$ is even (so $X$ is defined), then $\pi^*\mathscr{A}_U(M)$ is equal to $\mathscr{A}_X(M)$ in $\Br \kk(X)$~\citelist{\cite{Lam}*{V.2.4} \cite{KnusQF}*{Chap. 4, Lemma 9}}; in fact, if $n$ is odd then $\mathscr{A}_U(M)\otimes_{\kk(\PP^2)}\kk(X)$ is isomorphic to $\mathscr{A}_X(M)$.  
	In particular, this implies that $\pi^*\mathscr{A}_U(M)\in \Br X.$  In summary, we have proved the following proposition.
	\begin{prop}\label{prop:CompatibilityAXAU}
		Assume that $e = 2d$.  Then we have maps 
		\[
			\mathscr{A}_U\colon (\Pic C/\Z L)[2] \to \Br U[2] 
			\quad\textup{ and }\quad
			\mathscr{A}_X\colon (\Pic C/\Z L)[2] \to \Br X[2] 
		\]
		such that the diagram 
		\[
				\xymatrix{
					\left(\Pic C/\Z L\right)[2] 
					\ar[d]^{\mathscr{A}_X} \ar[r]^(.60){\mathscr{A}_U}& 
					\Br U[2]\ar[d]^{\pi^*}\\
					\Br X[2]\ar@{^(->}[r] &\Br \kk(X)[2]
				}
		\]
		commutes.
	\end{prop}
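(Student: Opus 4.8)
The statement packages together the construction of Sections~\ref{Ssymm}--\ref{subsec:scrAmaps}, so the plan is to separate it into two essentially independent assertions: first, that the recipe ``$\calL \rightsquigarrow M \rightsquigarrow \mathscr{C}l(M)$ or $\mathscr{C}l_0(M)$, restricted to $U$'' produces a well-defined element of $\Br U[2]$ (and, since $e = 2d$, a well-defined element of $\Br X[2]$) depending only on the class of $\calL$ in $\Pic C/\Z L$; and second, that $\pi^*\mathscr{A}_U(M) = \mathscr{A}_X(M)$ in $\Br\kk(X)$, which is the commutativity of the square. The membership statements and the commutativity are what the paragraph preceding the statement asserts; I would spell them out as follows.

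For the commutativity, diagonalize $M$ over $\kk(\PP^2)$ as $[m_1,\dots,m_n]$ via Proposition~\ref{prop:Diagonal} and apply Proposition~\ref{Pcliffassymbol}: whichever of $\mathscr{C}l(M)$, $\mathscr{C}l_0(M)$ is $\mathscr{A}_U(M)$ (the one that is central simple over $\kk(\PP^2)$, determined by the parity of $n$) equals $\sum_{1 \le i < j \le n}\tcyclic{m_i}{m_j}$ in $\Br\kk(\PP^2)[2]$, and the complementary one, which is $\mathscr{A}_X(M)$, has centre $\kk(\PP^2)(\sqrt{\det M})$ (up to a sign that is irrelevant since $\sqrt{-1}\in k$). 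The corollary to Theorem~\ref{thm:Catanese} gives $V(\det M) = C = V(f)$, and a degree count gives $\deg\det M = e = \deg f$, so $\det M/f$ is a nonzero constant and hence a square since $k$ is separably closed of characteristic $\ne 2$; thus the centre of $\mathscr{A}_X(M)$ is $\kk(\PP^2)(\sqrt f) = \kk(X)$. So $\mathscr{A}_X(M)$ is already a central simple $\kk(X)$-algebra, and the standard passage between the full and the even Clifford algebra over the quadratic extension cut out by the discriminant (\cite{Lam}*{V.2.4}, \cite{KnusQF}*{Chap.~4, Lemma~9}) identifies $\mathscr{A}_U(M)\otimes_{\kk(\PP^2)}\kk(X)$ with $\mathscr{A}_X(M)$ in $\Br\kk(X)$ --- an actual algebra isomorphism when $n$ is odd, a Brauer equivalence when $n$ is even. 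This is the identity $\pi^*\mathscr{A}_U(M) = \mathscr{A}_X(M)$; a short residue computation (Proposition~\ref{Presvals} together with the ramification behaviour of $\pi$ along $\tilde C$ and $\pi^{-1}(L)$) shows $\pi^*\mathscr{A}_U(M)$ is unramified on all of $X$, so it lies in $\Br X[2]$ and the square commutes.

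The part that genuinely needs care is the well-definedness: these classes must depend only on $\overline{[\calL]}\in(\Pic C/\Z L)[2]$ and not on the representative $\calL$, the twist used to make $\e\in\{0,1\}$, or the chosen symmetric resolution $M$ (two resolutions of the same $\calL$ may have different sizes $n$, i.e.\ correspond to different partitions of $e$). Independence of the twist is harmless once one restricts to $U$, since $\ell$ is a unit there and the twists by $\O_{\PP^2}(j)$ become canonically trivial; independence under a congruence $M\mapsto P^tMP$ is immediate from Corollary~\ref{cor:cliff}. The remaining independence I would extract from the residue map: since $k$ is separably closed, $\Br\PP^2 = 0$, so $\Br\kk(\PP^2)$ injects into $\bigoplus_W \HH^1(\kk(W),\Q/\Z)$ and a class in $\Br U$ is determined by its residues along $C$ and $L$; Proposition~\ref{Presvals} computes $\partial_C(\mathscr{A}_U(M))$ and $\partial_L(\mathscr{A}_U(M))$ explicitly from the principal minors of $M$, and one checks these depend only on $\overline{[\calL]}$. (This last check is precisely the residue comparison performed in Section~\ref{sec:main} when $\mathscr{A}_U$ is identified with $\phi$, so an equally valid option is to record the construction here and obtain full well-definedness from that comparison.) I expect this residue bookkeeping --- in particular the independence of the symmetric resolution --- to be the only real obstacle; the commutativity of the square is routine Clifford-algebra algebra.
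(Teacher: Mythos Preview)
Your proposal is correct and follows the paper's approach. The commutativity $\pi^*\mathscr{A}_U(M)=\mathscr{A}_X(M)$ in $\Br\kk(X)$ is exactly the content of the Lam/Knus citations the paper gives, and your discriminant computation identifying the centre with $\kk(X)$ is the right justification (the paper leaves it implicit). The paper's entire proof is the two sentences preceding the proposition; in particular it does \emph{not} address well-definedness here at all, noting immediately afterwards that ``thus far we have just proved there is a map of sets'' and deferring everything---including independence of the chosen resolution $M$---to the comparison $\mathscr{A}_U=\phi$ in Section~\ref{sec:main}. That is precisely the option you flag at the end, so your more careful discussion of well-definedness is extra scrupulousness rather than a different argument.

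The one small divergence is how you land in $\Br X$: the paper infers $\pi^*\mathscr{A}_U(M)\in\Br X$ directly from the identification with $\mathscr{A}_X(M)$ (implicitly treating the relevant Clifford sheaf as Azumaya over the discriminant cover), whereas you propose a residue check via Proposition~\ref{Presvals}. Both are fine; your route is more self-contained, the paper's is shorter but leans on unstated Clifford-algebra-over-schemes facts.
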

	We note that thus far we have just proved there is a map of \emph{sets}.  In Section~\ref{sec:main}, we will prove that $\mathscr{A}_U$ and $\mathscr{A}_X$ are group homomorphisms.
	\begin{example}\label{ex:paritycases}\ \\
		\begin{enumerate}
			\item $\deg C \textup{ even}, \e = 0$: In this case the set of divisors $\calL$ is exactly $\Jac(C)[2]$.  All $2$-torsion line bundles on generic curves $C$ will have a resolution giving rise to the partition $e = 2 + 2 + \cdots + 2$.  Other partitions of $e$ into even parts will only occur for special curves $C$.
			
			\item $\deg C \textup{ even}, \e = 1$: The set of such line bundles is in bijection with theta characteristics on $C$; the correspondence is twisting by $\frac12(e - 4)$.  On a generic curve $C$, such a line bundle $\calL$ will have a resolution giving rise to the partition $e = 1 + 1 + \cdots + 1$ or $e = 3 + 1 + \cdots + 1$, depending on whether the corresponding theta characteristic is even or odd, respectively.  {Recall that a theta characteristic $\calL$ is even or odd depending on the parity of $\hh^0(\calL)$.  }Other partitions of $e$ into odd parts will only occur for special curves $C$.
			
			\item $\deg C \textup{ odd}, \e = 0$: These line bundles are again in bijection with theta characteristics on $C$, although this time the bijection is given by twisting by $\frac12(e - 4)$.  On a generic curve $C$, such a line bundle $\calL$ will have a resolution giving rise to the partition $e = 1 + 1 + \cdots + 1$ or $e = 3 + 1 + \cdots + 1$, depending on whether the corresponding theta characteristic is even or odd respectively.  As above, other partitions of $e$ into odd parts will only occur for special curves $C$.
		\end{enumerate}
		
	\end{example}
	Note that if $\L\otimes\L \isom\OO_C(\e L)$ then $\deg\L = \frac12e\e$.  Thus, Example~\ref{ex:paritycases} covers all possibilities.

\section{Compatibility of $\psi$ and $\calA_X$}\label{sec:main}

	In this section we compare the maps 
	\[
		\phi\colon (\Pic C/\Z L)[2] \to \Br U[2]
		\quad \textup{and}\quad
		\psi\colon (\Pic C/\Z L)[2] \to \Br X[2]
	\]
	defined in Section~\ref{subsec:DefiningPsi} with the maps 
	\[
		\scrA_U\colon (\Pic C/\Z L)[2] \to \Br U[2]
		\quad\textup{and}\quad
		\scrA_X\colon (\Pic C/\Z L)[2] \to \Br X[2]
	\]
	defined in Section~\ref{subsec:scrAmaps}.  More precisely, we will prove the following theorem.
	\begin{theorem}\label{thm:Compatibility}
		For all $D\in (\Pic C/\Z L)[2]$, we have
		\[
			\phi(D) = \scrA_U(D)_k,
			\quad \textup{ and } \quad
			\psi(D) = \scrA_X(D)_k.
		\]
	\end{theorem}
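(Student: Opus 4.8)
The plan is to reduce the identity $\psi(D)=\scrA_X(D)_k$ to $\phi(D)=\scrA_U(D)_k$ and to prove the latter by comparing residues along $C$ and $L$. For the first reduction, Proposition~\ref{prop:kerToBr} identifies $\psi(D)$ with the image of $\pi^*\phi(D)$ under $\Br X[2]\hookrightarrow\Br\kk(X)[2]$, while Proposition~\ref{prop:CompatibilityAXAU} identifies $\scrA_X(D)_k$ with the image of $\pi^*\scrA_U(D)_k$; since $\Br X\hookrightarrow\Br\kk(X)$ is injective (Theorem~\ref{thm:inj}), it suffices to prove $\phi(D)=\scrA_U(D)_k$ inside $\Br\kk(\PP^2)$. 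For the second reduction, both classes lie in $\Br U[2]$ --- for $\phi(D)$ by construction in Proposition~\ref{prop:kerToBr}, for $\scrA_U(D)_k$ by the discussion in Section~\ref{subsec:scrAmaps} --- and since $k$ is separably closed, $\Br\PP^2=0$, so the residue map $\Br U[2]\hookrightarrow\frac{\kk(C)^\times}{\kk(C)^{\times 2}}\times\frac{\kk(L)^\times}{\kk(L)^{\times 2}}$ of diagram~\eqref{eq:BigDiagram} is injective. Thus it is enough to show that $\phi(D)$ and $\scrA_U(D)_k$ have equal residues along $C$ and along $L$.

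I would then record both sets of residues. From the proof of Proposition~\ref{prop:kerToBr}, $\partial_C(\phi(D))=g_D$ and $\partial_L(\phi(D))=(\tilde\ell^{2d}/f)^{n_D}$, where $\divv(g_D)=2D+n_D(L\cap C)$; choosing the representative $D$ with $\O_C(D)\isom\calL$ and using $\calL^{\otimes 2}\isom\O_C(\e L)$ gives $2D\sim\e(L\cap C)$, hence $n_D=-\e$ by a degree count, so $\partial_L(\phi(D))\equiv (f/\tilde\ell^{2d})^{\e}$ modulo squares. On the Clifford side, let $M$ be the fixed symmetric resolution of $\calL$ from Theorem~\ref{thm:Catanese}, with leading principal minors $M_i$. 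Possibly replacing $M$ by an isomorphic symmetric resolution, one may assume all $M_i$ are nonzero and $M_1,\dots,M_{n-1}$ are coprime to $f$; one also uses that $\det M=M_n$ is a nonzero constant times $f$ (by the degree count in the corollary to Theorem~\ref{thm:Catanese} and the irreducibility of $C$), and that a general line $L$ satisfies $v_L(M_i)=0$ for all $i$. Feeding this, together with the bookkeeping of the twist of $V$ and the trivialization of $\O_{\PP^2}(1)$ over $\A^2$ from Section~\ref{subsec:scrAmaps}, into Proposition~\ref{Presvals}, one obtains that $\partial_C(\scrA_U(D)_k)$ is represented, modulo squares and powers of $\ell|_C$, by $M_{n-1}|_C$, and --- using that all parts of the partition of $e=2d$ have the same parity, so that $n$ is even when $\e=1$ --- one gets $\partial_L(\scrA_U(D)_k)\equiv (f/\tilde\ell^{2d})^{\e}$, matching $\partial_L(\phi(D))$.

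The remaining, and I expect main, step is to match residues along $C$, i.e.\ to show $g_D\cdot(M_{n-1}|_C)$ is a square in $\kk(C)^\times$; since $k$ is separably closed and $\divv(\ell|_C)=L\cap C$, this amounts to the divisor congruence $\divv(M_{n-1}|_C)\equiv 2D\pmod{2\Princ(C)+\Z(L\cap C)}$. This requires unwinding the geometry behind Catanese's resolution (Theorem~\ref{thm:Catanese}): restricted to $C$ one has $M\cdot\operatorname{adj}(M)|_C=0$, so $\operatorname{adj}(M)|_C$ has rank one, and being symmetric it factors through a rank-one subsheaf of $\bigoplus\O_{\PP^2}(-a_i)|_C$ isomorphic to $\calL$; the entry $\operatorname{adj}(M)_{nn}|_C=\pm M_{n-1}|_C$ is then, up to the twist, the square of a section whose zero divisor represents $\calL$ in $\Pic C/\Z L$, which gives the congruence. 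Equivalently, this is the statement that the residue of the even Clifford algebra of the quadric bundle along its discriminant curve $C$ recovers the line bundle $\calL$; pinning this down is the technical heart, whereas the residue comparison at $L$ and the two reductions above are comparatively routine.

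Finally, once both residues agree, injectivity of the residue map on $\Br U[2]$ yields $\phi(D)=\scrA_U(D)_k$, and the first reduction upgrades this to $\psi(D)=\scrA_X(D)_k$. As a byproduct one obtains that $\scrA_U$ and $\scrA_X$ are group homomorphisms and that $\scrA_U(D)_k$ depends only on the class of $\calL$ in $\Pic C/\Z L$.
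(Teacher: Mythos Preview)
Your overall strategy matches the paper's: reduce $\psi=\scrA_X$ to $\phi=\scrA_U$ via Propositions~\ref{prop:kerToBr} and~\ref{prop:CompatibilityAXAU}, then prove the latter by comparing residues, using the injectivity of $\partial$ on $\Br U[2]$ from diagram~\eqref{eq:BigDiagram}. Two points of divergence are worth noting. First, you compute $\partial_L$ directly, but this is unnecessary and your sketch there is the weakest part of the proposal: the paper observes that once $\partial_C(\phi(D))=\partial_C(\scrA_U(D)_k)$, the difference lies in $\Br(\PP^2\setminus L)[2]=\Br(\A^2)[2]=0$, so equality at $L$ is automatic. Second, for the main identification $\divv(M_{n-1}|_C)\equiv 2D$ you invoke the rank-one adjugate factorization $\operatorname{adj}(M)|_C$, whereas the paper carries this out concretely via a local calculation (Proposition~\ref{Ptangency}, showing $v_P(M_{nn})=2\min_i v_P(M_{in})$ for $P\in C$) together with an explicit description of the Weil divisor of $\calL$ as $V(M_{1n},\ldots,M_{nn})$ minus a multiple of $L$ (Corollary~\ref{cor:WeilDivisorOfL}). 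Your adjugate formulation is the same geometry packaged more conceptually and is standard (it is essentially Beauville's approach in~\cite{Beauville-Determinantal}); the paper's hands-on route trades elegance for not having to track the various twists by $\O(a_i)$ and $\O(\e-e)$ in the factorization.
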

	{Theorem~\ref{thm:geometricBr} then follows immediately from Theorems~\ref{thm:residueBr} and~\ref{thm:Compatibility}, Proposition~\ref{Pdimcount}, and Example~\ref{ex:GenericPartitions}. 
	
	We will prove Theorem~\ref{thm:Compatibility} by comparing residues of $\phi(D)$ and $\scrA_U(D)_k$.  We begin in Section~\ref{sec:residues} by computing the residues of $\scrA_U(D)$ in terms of minors of the matrix $M = M_D$ associated to $D$ via Theorem~\ref{thm:Catanese}.  Next, in Section~\ref{sec:SymMatrices} we prove some preliminary relations among the minors of symmetric matrices which we use in Section~\ref{sec:WeilDivisor} to relate the vanishing of certain minors of $M_D$ to the Weil divisor $D$.  Finally, we combine everything in Section~\ref{sec:Proof} to complete the proof of Theorem~\ref{thm:Compatibility}.
	
	}

	As before, if $M$ is an $n \times n$ matrix, we let $M_{ij}$ denote the $(n-1) \times (n-1)$ minor obtained by deleting the $i^{\mathrm{th}}$ row and $j^{\mathrm{th}}$ column of $M$.  Furthermore, we let $M_i$ denote the upper-left $i \times i$ principal minor.

	\subsection{Residues of $\mathscr{A}_U(\calL)$}\label{sec:residues}

		In this section, we will compute the ramification data $\mathscr{A}_U(\calL)$.  		
		Let $M^{(0)}$ be the matrix over $\kk(\PP^2)$ obtained from $M$ by dividing every entry by an appropriate power of $\ell$.  We will first perform a change of basis so that we may assume certain minors are nonzero.
		\begin{lemma} \label{lem:smoothseq}
		Let $M$ be a symmetric $n \times n$ matrix over the polynomial ring $k_0[x,y]$, and suppose that $\det M \neq 0$.  Then there is a basis of $k_0[x,y]^n$ so that if we write $M$ in terms of this basis, all the principal minors $M_i$ are non-zero.
		\end{lemma}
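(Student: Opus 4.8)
The plan is to induct on $n$, reducing at each stage to a smaller symmetric matrix over the same ring $R := k_0[x,y]$; write $K := \Frac R$ and $q(w) := w^t M w$ for the associated quadratic form. The base case $n = 1$ is trivial, since $\det M = m_{11} \neq 0$.

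For the inductive step the first task is to produce a vector $v_1 \in R^n$ that is \emph{unimodular} — one of an $R$-basis of $R^n$ — and has $q(v_1) \neq 0$. If some diagonal entry $m_{ii}$ is nonzero, take $v_1 = e_i$. Otherwise all $m_{ii}$ vanish; since $\det M \neq 0$ forces $M \neq 0$, some off-diagonal $m_{ij} \neq 0$, and then $q(e_i + e_j) = 2 m_{ij} \neq 0$ because $\Char k_0 \neq 2$ (this is the only use of the characteristic hypothesis). In both cases $v_1$ is a standard basis vector or a sum of two, so completing it to an $R$-basis of $R^n$ is elementary; let $P_1 \in GL_n(R)$ be the matrix whose columns form such a basis. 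Since $(P_1 P_2)^t M (P_1 P_2) = P_2^t (P_1^t M P_1) P_2$ for any $P_2$, it suffices to prove the lemma for $M^{(1)} := P_1^t M P_1$, whose $(1,1)$-entry is $m_{11} = q(v_1) \neq 0$. Write $M^{(1)} = \begin{pmatrix} m_{11} & b^t \\ b & M' \end{pmatrix}$ with $M'$ symmetric of size $n-1$ over $R$.

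The main idea is then to eliminate the first row and column while staying over $R$. Over $K$ one would pass to the Schur complement $M' - m_{11}^{-1} b b^t$, but to avoid denominators one sets instead $\tilde S := m_{11} M' - b b^t \in \Mat_{n-1}(R)$, which is symmetric and, by the block-determinant formula, satisfies $\det \tilde S = m_{11}^{\,n-2}\det M^{(1)} \neq 0$. By the inductive hypothesis there is $Q \in GL_{n-1}(R)$ so that all leading principal minors of $Q^t \tilde S Q$ are nonzero. With $P_2 := \begin{pmatrix} 1 & 0 \\ 0 & Q \end{pmatrix} \in GL_n(R)$, a routine $2\times 2$-block determinant computation shows that for $1 \le i \le n$ the $i$-th leading principal minor of $P_2^t M^{(1)} P_2$ equals $m_{11}^{\,2-i}$ times the $(i-1)$-st leading principal minor of $Q^t \tilde S Q$ (with the $0$-th minor taken to be $1$); since $m_{11} \neq 0$, all of these are nonzero, and $P := P_1 P_2$ completes the step.

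The one genuine subtlety — the reason the induction closes up — is the denominator issue in the last paragraph: an honest orthogonal complement of $v_1$ exists only after inverting $m_{11}$, so one clears denominators by replacing the Schur complement with $\tilde S$ and then checks, via $\det \tilde S = m_{11}^{\,n-2}\det M^{(1)}$, that $\tilde S$ is still nondegenerate, which is precisely what the inductive hypothesis requires. Everything else is routine manipulation of block matrices; in particular $R$ is never assumed infinite, so the argument works verbatim over any integral domain of characteristic not $2$.
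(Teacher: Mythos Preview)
Your proof is correct and takes a genuinely different route from the paper's.

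The paper argues by specialization: evaluate $M$ at a $k_0$-point of $\A^2$ to get a nondegenerate quadric over the field $k_0$, find a complete flag of smooth hyperplane sections there (with a counting argument to handle finite $k_0$), and then lift the resulting change-of-basis matrix back to $k_0[x,y]$ using the surjection $\GL_n(k_0[x,y]) \twoheadrightarrow \GL_n(k_0)$. Your argument instead inducts directly over $R=k_0[x,y]$: after arranging $m_{11}\neq 0$, you pass to the scaled Schur complement $\tilde S = m_{11}M' - bb^t$, verify $\det\tilde S = m_{11}^{\,n-2}\det M^{(1)}\neq 0$, apply the inductive hypothesis to $\tilde S$, and relate the leading minors of the resulting congruent matrix to those of $Q^t\tilde S Q$ via the identity you state (which is correct --- it follows from the block Schur formula applied over $K$). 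Your approach is purely algebraic and, as you note, works over any integral domain of characteristic $\neq 2$, whereas the paper's proof uses the existence of a $k_0$-point and the section $k_0\hookrightarrow k_0[x,y]$. The paper's version is shorter and more geometric; yours is more general and avoids any case analysis on the size of $k_0$.
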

		\begin{proof}
			We will first do this over a point in $\A^2$.  Write $Q$ for the corresponding quadric.  We need to find a sequence of smooth hyperplane sections $Q \cap H_1 \subset Q \cap H_1 \cap H_2 \subset \cdots \subset Q \subset H_1 \cap \cdots \cap H_{n-2}$ which are all smooth.  A hyperplane section of a smooth quadric is singular if and only if the hyperplane is a tangent plane.  The generic hyperplane is not a tangent plane, even over finite fields since the number of points in a quadric is smaller than the number of possible hyperplane sections.

Now to prove this generally, we merely do the above change of basis at a point
and then lift to a change of basis over $k_0[x,y]$ via the surjection
$\GL_n k_0[x,y] \twoheadrightarrow \GL_n k_0.$
		\end{proof}			
		\begin{prop}\label{prop:AUResidues}
			The Azumaya algebra $\scrA_U(\calL)$ is unramified away from $C$ and $L$.  At $C$ and $L$, we have
			\[
				\partial_C(\scrA_U(\calL)) = M_{nn}/\ell^{2d - d_n}, 
				\quad \textup{ and }\quad
				\partial_L(\scrA_U(\calL)) = (f/\tilde\ell^{2d})^i, 
			\]
			where $\divv(M_{nn}/\ell^{e - d_n}) = 2D + i(C\cap L)$ for some divisor $D$.
		\end{prop}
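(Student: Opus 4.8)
The plan is to compute $\partial_C$ and $\partial_L$ of $\scrA_U(\calL)$ directly, using the explicit formula for the residue of a Clifford (or even Clifford) algebra established in Proposition~\ref{Presvals}, applied to a matrix presentation of the quadratic form $M$ over the local rings at $C$ and at $L$. First I would invoke Lemma~\ref{lem:smoothseq}: after a suitable $\GL_n(k_0[x,y])$-change of basis (over the affine chart $\A^2 = \PP^2\setminus L$), we may assume that all principal minors $M_i$ of $M$ are nonzero; this change of basis does not alter the Brauer class $\scrA_U(\calL)$ since equivalent quadratic forms yield isomorphic Clifford algebras by Corollary~\ref{cor:cliff}. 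Then, working with the matrix $M^{(0)}$ obtained by normalizing each entry $m_{ij}$ by the appropriate power of $\ell$ (so that the quadratic form takes values in $\OO_{\PP^2}$ or $\OO_{\A^2}$ according to the parity of $\e - e$), I would apply Proposition~\ref{Presvals}: setting $e_i^{(C)} := v_C(M_i^{(0)})$ and $e_i^{(L)} := v_L(M_i^{(0)})$, the residue along $C$ (resp.\ $L$) is $\prod_{i=1}^n (M_i^{(0)})^{\,e_{i+1}-e_{i-1}}$ with the appropriate valuation. The key point is that $\partial_C$ (resp.\ $\partial_L$) is controlled by the valuations of the principal minors of the normalized matrix.

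The crux is identifying the vanishing orders of these principal minors along $C$ and $L$. Along $C$: since $V(\det M) = C$ and $C$ is smooth (hence $\OO_{\PP^2,C}$ is a DVR and $\det M$ vanishes to order exactly $1$ along $C$), all principal minors $M_i$ for $i < n$ are nonzero and, for generic choice of basis within the constraints of Lemma~\ref{lem:smoothseq}, do not vanish along $C$, while $M_n = \det M$ vanishes to order $1$. Translating this into the normalized matrix $M^{(0)}$ and plugging into the telescoping formula of Proposition~\ref{Presvals} collapses the product to $M_{nn}/\ell^{2d-d_n}$ up to $p$-th powers — here I would use the relation $M_n = \det M$ together with the cofactor/adjugate identity relating $\det M$, $M_{nn}$, and the lower principal minors to rewrite the single surviving factor as $M_{nn}$ (divided by the power of $\ell$ dictated by the degree bookkeeping $\deg m_{nn} = d_n$, $e = 2d$). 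Along $L$: the form is already defined and nondegenerate in a neighborhood of the generic point of $L$ (we chose $L$ to meet $C$ transversally and avoid the branch locus issues), and the only source of ramification is the power of $\ell$ introduced in the normalization; tracking these powers and using that $f$ restricted to $L$ is, up to the unit $\tilde\ell^{2d}$, the defining equation of $C\cap L$ gives $\partial_L(\scrA_U(\calL)) = (f/\tilde\ell^{2d})^i$.

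For the last assertion, I would compute $\divv_C(M_{nn}/\ell^{e-d_n})$ as a divisor on $C$: the restriction of $M_{nn}$ to $C$ is, by the symmetric resolution of $\calL$, closely tied to a section cutting out $2D$ plus a contribution along $C\cap L$ — this is exactly where the minor $M_{nn}$ encodes the line bundle $\calL$ (its restriction to $C$ generates $\calL$ or $\calL^{-1}$ up to the twist, and squares of sections appear because $\calL\otimes\calL \simeq \OO_C(\e L)$). So $\divv_C(M_{nn}/\ell^{e-d_n}) = 2D + i(C\cap L)$ where $i$ matches the exponent appearing in $\partial_L$; consistency of the two computations (both governed by the same integer $i = v_L$ of the relevant minor, equivalently the residue pairing) is what forces the same $i$ in both formulas.

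I expect the main obstacle to be the careful bookkeeping of twists and powers of $\ell$: the quadratic form $M$ takes values in a nontrivial line bundle $\OO_{\PP^2}(\e - e)$, so passing to an honest $\OO_{\PP^2}$- or $\OO_{\A^2}$-valued form requires a twist that shifts every entry, and hence every $i\times i$ principal minor, by a controlled power of $\ell$; getting the exponent $2d - d_n$ (and the exponent $i$ on $f/\tilde\ell^{2d}$) exactly right — rather than off by a sign or a factor — is the delicate part. The conceptual content, by contrast, is light: it is just Proposition~\ref{Presvals} plus the fact that a generic basis makes all but the last principal minor a unit at $C$.
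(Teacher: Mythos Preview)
Your approach to $\partial_C$ is essentially the paper's: invoke Lemma~\ref{lem:smoothseq}, then apply Proposition~\ref{Presvals} using that only $M_n = \det M$ vanishes along $C$. Two minor clarifications. First, no cofactor/adjugate identity is needed: the single surviving factor in the Presvals formula is $M_{n-1}$, and under the paper's conventions $M_{n-1}$ \emph{is} $M_{nn}$ (deleting the $n$th row and column leaves the upper-left $(n-1)\times(n-1)$ block). Second, the reason $v_C(M_j)=0$ for $j<n$ is not a genericity argument but a degree count: once Lemma~\ref{lem:smoothseq} guarantees $M_j\neq 0$, we have $\deg M_j < \deg M_n = e$, and since $C$ is irreducible of degree $e$ it cannot be contained in $V(M_j)$.

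Where you diverge from the paper is in handling $\partial_L$ and the divisor relation. You propose to compute both directly via Proposition~\ref{Presvals}, tracking the powers of $\ell$ introduced by the twist; you rightly identify this bookkeeping as the delicate part. The paper sidesteps it entirely. For $\partial_L$, the paper uses Proposition~\ref{prop:CompatibilityAXAU}: since $\pi^*\scrA_U(\calL)\in\Br X$, the residue at $L$ must die in $\kk(\pi^{-1}(L)) = \kk(L)(\sqrt{f/\tilde\ell^{2d}})$, forcing it to be $(f/\tilde\ell^{2d})^i$ for some $i$ without any computation. For the relation $\divv(M_{nn}/\ell^{e-d_n}) = 2D + i(C\cap L)$, the paper simply invokes the exactness of the middle row of diagram~\eqref{eq:BigDiagram}: the Artin--Mumford compatibility of residues at the points of $C\cap L$ forces the $C$-residue and the $L$-residue to match up in exactly this way. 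Your direct route should work, but the paper's argument shows the bookkeeping you flagged as the main obstacle is avoidable.
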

		\begin{proof}
			The Azumaya algebra $\scrA_U(\calL)$ is constructed as the Clifford algebra or even Clifford algebra of a smooth quadric bundle over $U$, and as such, is unramified over all of $U$.  Furthermore, by Proposition~\ref{prop:CompatibilityAXAU}, $\pi^*\scrA_U(\calA)$ is unramified on $X$.  Since $\kk(\pi^{-1}(L)) = \kk(L)(\sqrt{f/\tilde\ell^{2d}})$, the residue at $L$ must be $(f/\tilde\ell^{2d})^i$ for some integer $i$.
			
			We now turn to computing the residue at $C$.  By Lemma~\ref{lem:smoothseq}, we may assume that $M_{j}, M_{j}^{(0)}\neq 0$ for all $j$.  Since $\deg(M_{n}) > \deg(M_{j})$ for all $j < n$ and $C$ is reduced and irreducible, $v_C(M_j) = 0$ for all $j < n$ and similarly $v_C(\ell) = 0$ 
Hence, by Proposition~\ref{Presvals}, $\partial_C(\scrA_U(\calL)) = M_{nn}/\ell^{e - d_n}$.  Finally, the compatibility between $i, M_{nn},$ and $d_n$ follows from~\eqref{eq:BigDiagram}.
		\end{proof}

\begin{remark}  One can use Schur's identity to show that  $M_{i+1}/M_{i-1}$
is a square modulo $M_i$ giving an alternate approach to showing that
$\Cl(Q)$ is unramified away from $C$ and $L$.
\end{remark}

	\subsection{Symmetric matrices of homogeneous forms on $\PP^2$}
	\label{sec:SymMatrices}
		Throughout this section $M = (m_{i,j})$ will be an arbitrary symmetric $n\times n$ matrix of homogeneous forms on $\PP^2$ such that $2\deg(m_{ij}) = \deg(m_{ii}) + \deg(m_{jj}).$  Also, $C = V(\det M) \subseteq \P^2$ (note that this is a more general situation than in Section \ref{sec:theta}, as
		we do not assume anything about $C$).
		
	\begin{lemma}\label{lemma:smoothC}
	The curve $C$ is smooth if and only if the rank of $M$ is $n-1$ along $C$. 
	More precisely, if the rank at a point $P\in C$ is $r$, then $C$ has a singularity of multiplicity $n-r$ at that point.
	\end{lemma}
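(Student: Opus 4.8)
The plan is to reduce the lemma to a local multiplicity computation. For a nonzero form $g$ on $\PP^2$ and a closed point $P$, the multiplicity of $V(g)$ at $P$ equals $\ord_P(g):=\max\{k:g\in\mathfrak m_P^k\}$, where $\mathfrak m_P\subset\OO_{\PP^2,P}$ is the maximal ideal; in particular $V(g)$ is smooth at $P$ precisely when $\ord_P(g)\le 1$. Since $\rank M(P)\le n-1$ for every $P\in C$, both assertions of the lemma follow once we establish
\[
	\ord_P(\det M)=n-\rank M(P)\qquad\text{for every closed }P\in C,
\]
by specializing to $\rank M(P)=n-1$. So fix $P$, put $r:=\rank M(P)$, and work in $\OO:=\OO_{\PP^2,P}$ with local parameters $u,v$.

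Choose an invertible matrix over $k$ diagonalizing the constant symmetric matrix $M(P)$, lift it to some $T\in\GL_n(\OO)$, and replace $M$ by $T^tMT$; since $\det(T^tMT)=(\det T)^2\det M$ with $\det T$ a unit, this changes neither $\ord_P(\det M)$ nor $r$, and afterwards $M(P)=\operatorname{diag}(I_r,0)$. Write accordingly $M=\left(\begin{smallmatrix}A&B\\ B^t&D\end{smallmatrix}\right)$ with $A\equiv I_r$, $B\equiv 0$, $D\equiv 0\pmod{\mathfrak m_P}$ and $D$ symmetric of size $n-r$; then $A\in\GL_r(\OO)$, and the Schur complement identity gives $\det M=\det(A)\cdot\det(D')$ with $D':=D-B^tA^{-1}B$. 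As $\det A$ is a unit, $\ord_P(\det M)=\ord_P(\det D')$. The entries of $B^tA^{-1}B$ lie in $\mathfrak m_P^2$, so $D'\equiv D\pmod{\mathfrak m_P^2}$ entrywise; and since the entries of $D'$ lie in $\mathfrak m_P$, expanding $\det D'$ as a sum of products of $n-r$ of them yields $\ord_P(\det D')\ge n-r$. This lower bound alone shows that any point of $C$ where $\rank M\le n-2$ is a singular point of multiplicity $\ge n-r\ge 2$, which proves the implication ``$C$ smooth $\Rightarrow\rank M=n-1$ along $C$'' together with the inequality $\ge$ in the displayed formula.

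The reverse inequality $\ord_P(\det M)\le n-r$ is the crux. By $D'\equiv D\pmod{\mathfrak m_P^2}$, the degree-$(n-r)$ part of $\det D'$ is the binary form $\det(uN_1+vN_2)\in k[u,v]$, where $N_1,N_2$ are the constant symmetric $(n-r)\times(n-r)$ matrices recording the linear parts at $P$ of the entries of $D$; so we must show that this pencil of symmetric matrices is not contained in the locus of singular ones. When $r=n-1$ this is exactly the statement that $C$ is smooth at $P$: there $\operatorname{adj}M(P)$ has rank $1$, so the differential of $\det$ on symmetric matrices is nonzero at $M(P)$, and $C$ is singular at $P$ if and only if the function $P'\mapsto w^tM(P')w$, with $w$ spanning $\ker M(P)$, vanishes to order $\ge 2$ at $P$. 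The plan is to exclude this using the grading hypothesis $2\deg m_{ij}=\deg m_{ii}+\deg m_{jj}$, which forces $\deg\det M=\sum_i\deg m_{ii}$ and constrains how the quadric bundle $V(x^tMx)\subset\PP_{\PP^2}\!\left(\bigoplus_i\OO(-a_i)\right)$ can degenerate over $C$: a kernel direction of $M(P)$ along which $M$ vanishes to second order would give a fibre too singular to be compatible with the prescribed vanishing order of $\det M$ along $C$. I expect this verification that the leading form is nonzero to be the only genuinely delicate step; the Schur-complement reduction and the identification of multiplicity with $\ord_P$ are formal. Granting it, the two inequalities give $\ord_P(\det M)=n-r$ for all $P\in C$, and the biconditional and the multiplicity statement follow at once.
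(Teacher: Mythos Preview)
Your Schur-complement argument for the inequality $\ord_P(\det M)\ge n-r$ is correct and is, in spirit, exactly the paper's proof: the paper passes to the completion, normalizes so that $M(P)=0_{n-r}\oplus I_r$, and then observes that $\det M\in\mathfrak m_P^{\,n-r}$. From this alone one deduces that if $C$ is smooth at $P$ then $r=n-1$, and more generally that the multiplicity of $C$ at $P$ is \emph{at least} $n-r$. That is the entire content of the paper's proof, and it is all that is ever used later (in Proposition~\ref{Ptangency} and in \S\ref{sec:WeilDivisor} one only needs ``$C$ smooth $\Rightarrow$ rank $M=n-1$ along $C$'').

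The reverse inequality $\ord_P(\det M)\le n-r$, which you correctly isolate as ``the crux'' and then leave as an unverified expectation, is in fact \emph{false} under the stated hypotheses. Take $n=2$ and
\[
M=\begin{pmatrix}1&0\\0&x^2\end{pmatrix}
\]
with $\deg m_{11}=0$, $\deg m_{12}=1$, $\deg m_{22}=2$; the grading condition $2\deg m_{ij}=\deg m_{ii}+\deg m_{jj}$ is satisfied. Here $\det M=x^2$, so $C$ is the doubled line $x=0$, while at every point of $C$ the rank of $M$ is $n-1=1$. Thus rank $n-1$ along $C$ does \emph{not} force $C$ to be smooth, and the multiplicity can exceed $n-r$. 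Your appeal to the degree constraints to rule this out cannot succeed: the example already satisfies them, and your heuristic about the fibre of the quadric bundle being ``too singular'' does not pin anything down, since a pencil of singular quadrics is perfectly compatible with $\det M$ vanishing to higher order.

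So the gap in your write-up is real, but the fault lies with the statement rather than with your method: the paper itself only proves (and only uses) the inequality $\ord_P(\det M)\ge n-r$ and the implication ``$C$ smooth $\Rightarrow$ corank $1$''. The clean fix is to weaken the lemma to that one-sided assertion; your Schur-complement computation then gives a complete proof that is essentially equivalent to the paper's.
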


	\begin{proof}
	  Since this is a geometric condition we can assume that we are over an
	algebraically closed field $\overline{k}_0$, and we can pass to the completion at a point $P\in C$ so we will work over $\overline{k}_0 \llbracket x,y \rrbracket$ with maximal
	ideal $\mm=(x,y)$.  Let $M(P)$ denote the matrix $M$ at $P$ and let $r$ denote the rank of $M(P)$.  We can change basis at $P$ so that  $M(P) = 0_{n-r} \oplus I_r$ where $I_r$ is
	the $r \times r$ identity matrix and $0_{n-r}$ is the $(n-r) \times (n-r)$
	zero matrix.
	Now when we lift our chosen basis at $p$ by Nakayama's
	lemma, we get that
	$M=M(p)+xM^x+yM^y+m^2\overline{F}\llbracket x,y \rrbracket ^{n \times n}$ for some symmetric matrices
	$M^x,M^y$ over $\overline{F} \llbracket x,y \rrbracket$.  Note
	that $\det M \in (x,y)^{n-r}$.  So $C=V(\det M)$ is smooth if and only
	if the corank $n-r=1$.  
	\end{proof}

	\begin{remark} In higher dimensions, the discriminant of the quadric
	  bundle will have singularities as long as the map parametrizing the quadrics has dimension $\geq 2$.  It would be interesting to extend the results of this paper to higher dimensions, but this would require an analysis of the behaviour of the singularities of $C=V(\det M_{nn})$
\end{remark}

	\begin{prop}\label{Ptangency}
		Let $M$ be a symmetric $n\times n$ matrix of homogeneous forms on $\P^2.$
		Assume that $C = V(\det M)$ is smooth. Let $j\in \{1, \ldots, n\}.$  Then
		\[
			V(\det M, M_{jj})_{\textup{red}} = 
			V(M_{1j},\ldots,M_{nj})_{\textup{red}}.
		\]
		Further, for every $P\in C$, $v_P(M_{jj}) = 2\min_i\{v_P(M_{ij})\}$.
	\end{prop}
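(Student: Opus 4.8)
The plan is to work locally at a point $P \in C$, using the structure theory of symmetric matrices over the complete local ring $\widehat{\OO}_{\PP^2, P} \cong \overline{k}_0\llbracket x, y\rrbracket$. Since both sides of the claimed equality of reduced schemes and the claimed valuation identity are local and can be checked after base change to an algebraic closure, I would fix $P \in C$ and put $R := \overline{k}_0\llbracket x, y\rrbracket$ with maximal ideal $\mm$. By Lemma~\ref{lemma:smoothC}, smoothness of $C$ forces the rank of $M(P)$ to be exactly $n - 1$, so the cokernel of $M$ is locally free of rank $1$ on $C$ near $P$; equivalently, after a change of basis over $R$, we may arrange $M(P) = 0_1 \oplus I_{n-1}$, i.e., the single zero is in the top-left corner (or any chosen corner).

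The first key step is the observation that the classical adjugate identity $M \cdot \operatorname{adj}(M) = \det(M)\cdot I$ restricts to a relation on $C = V(\det M)$: the columns of $\operatorname{adj}(M)$, whose entries are (up to sign) the cofactors $M_{ij}$, lie in the kernel of $M$ modulo $\det M$. Since near $P$ the kernel of $M$ on $C$ is a line bundle (rank one, as $C$ is smooth), any two columns of $\operatorname{adj}(M)$ are proportional modulo $\det M$ — and in fact, working in $R$, the full matrix $\operatorname{adj}(M)$ has rank $1$ modulo $(\det M)$, so all its entries are controlled by a single generator of this rank-one module. Concretely, I would show that $\operatorname{adj}(M) = (\text{column } v)\cdot(\text{row } w) + (\det M)\cdot(\text{something})$ for vectors $v, w$ over $R$ with $v = w$ by symmetry of $M$ (so $\operatorname{adj}(M)$ is symmetric), giving $M_{ij} \equiv v_i v_j \pmod{\det M}$. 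In the chosen basis where the degeneracy is at the first coordinate, one checks $v \equiv (1, 0, \dots, 0) \pmod{\mm}$ up to a unit, so that $M_{11}$ is, modulo $\det M$, a unit times $v_1^2$ while $M_{ij}$ for $i,j$ both $> 1$ lie in $\mm^2$, and $M_{1j} \in \mm$ for $j > 1$.

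From this the two assertions follow. For $V(\det M, M_{jj})_{\textup{red}} = V(M_{1j}, \dots, M_{nj})_{\textup{red}}$: the inclusion $\supseteq$ is trivial since $M_{jj}$ is one of the listed generators and $\det M$ cuts out $C$; for $\subseteq$, on $C$ the congruence $M_{ij} \equiv v_i v_j$ shows that $M_{jj} \equiv v_j^2$, so $M_{jj}$ vanishes at a point of $C$ iff $v_j$ does, and then $M_{ij} \equiv v_i v_j$ vanishes there for every $i$ — so the two ideals have the same radical on $C$, and since $\det M$ lies in both radicals, the two reduced subschemes of $\PP^2$ agree. For the valuation identity $v_P(M_{jj}) = 2\min_i\{v_P(M_{ij})\}$ at a point $P \in C$: restricting everything to the smooth curve $C$ (whose local ring at $P$ is a DVR), the congruences become equalities $M_{ij}|_C = v_i v_j$ (the correction term $(\det M)(\cdots)$ vanishes on $C$), so $v_P(M_{ij}|_C) = v_P(v_i) + v_P(v_j)$ and hence $v_P(M_{jj}) = 2 v_P(v_j)$ while $\min_i v_P(M_{ij}) = \min_i(v_P(v_i) + v_P(v_j)) = v_P(v_j) + \min_i v_P(v_i) = v_P(v_j)$, using that $\min_i v_P(v_i) = 0$ because the vector $v$ generates a rank-one locally free (hence locally trivial) submodule, so not all $v_i$ can vanish at $P$.

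The main obstacle I anticipate is making the factorization $\operatorname{adj}(M) \equiv v v^t \pmod{\det M}$ genuinely rigorous over $R$ rather than just pointwise: one needs that the rank-one-modulo-$(\det M)$ matrix $\operatorname{adj}(M)$, which is symmetric, actually factors through a single vector $v$ defined over $R$ (not merely over the fraction field or after localization), and that $v$ can be taken with $\gcd$ of its entries a unit, i.e., $v$ is a "primitive" vector cutting out the kernel line bundle. This is where smoothness of $C$ is essential — it is exactly the condition (via Lemma~\ref{lemma:smoothC}) that the cokernel sheaf is invertible, so that $\operatorname{adj}(M)$, viewed as a map whose image lands in that invertible sheaf on $C$, is given by multiplication by a section that is nowhere-zero-divisor. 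I would handle this by choosing the basis so that $M$ has the block form with a single unit block $I_{n-1}$ modulo $\mm$, computing $\operatorname{adj}$ of such a block matrix explicitly (the $(1,1)$ cofactor is a unit, the others are divisible by entries of the first row/column of $M$), and then invoking symmetry to conclude the $vv^t$ shape with $v_1$ a unit. Once that normal form is in hand, everything else is bookkeeping with valuations on the DVR $\OO_{C,P}$.
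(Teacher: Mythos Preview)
Your approach is correct and takes a genuinely different route from the paper's. The paper argues the set-theoretic equality geometrically, via the quadric bundle: at a point $P$ where both $\det M$ and $M_{jj}$ vanish, the fiber $Q_P$ is a cone over its vertex, and the hyperplane $V(e_j)$ meets it singularly precisely when the vertex lies in that hyperplane, which forces all cofactors $M_{ij}$ to vanish. For the valuation statement the paper then passes to the completion, puts the vertex at $e_1$, diagonalizes the invertible lower-right $(n-1)\times(n-1)$ block by Gram--Schmidt, and computes each $M_{in}$ explicitly in terms of the first row of $M$ to read off the valuations.

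Your argument replaces both steps by a single algebraic observation: modulo $\det M$ the adjugate is a symmetric matrix of rank one over the DVR $\OO_{C,P}$, and since one of its diagonal entries is a unit (namely $M_{11}$ in the normal form you set up), it factors as a unit times $vv^t$ with $v$ primitive. The equalities $M_{ij}|_C = (\textup{unit})\cdot v_iv_j$ then give both the radical equality and the valuation identity at once, with no need for the quadric picture or the explicit diagonalization. Two small points to clean up: the entries of $\operatorname{adj}(M)$ are $(-1)^{i+j}M_{ij}$ rather than $M_{ij}$, but the signs are immaterial for vanishing and for valuations; and your justification of the containment $V(M_{1j},\dots,M_{nj})\subseteq V(\det M)$ should invoke the cofactor expansion of $\det M$ along the $j$th column rather than the phrase ``$\det M$ cuts out $C$''. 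What your approach buys is uniformity and brevity; what the paper's approach buys is an explicit geometric interpretation (tangency of the diagonal-minor curves to $C$) that is used in the examples of Section~\ref{sec:examples}.
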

	 \begin{proof}
		By a permuation of the coordinates we may assume that $j = n$. Further, this is a geometric statement, so we will work over the algebraic closure $\overline{k}_0$.  
		
		We will first show the set-theoretic equality $V(\det M, M_{nn})_{\textup{red}} = V(M_{1n},\ldots,M_{nn})_{\textup{red}}$.  The containment $V(M_{1n},\ldots,M_{nn}) \subset V(\det M, M_{nn})$ is clear.  To prove the reverse containment let $P\in V(\det M, M_{nn})_\textup{red}$ and consider the quadric bundle $\mathcal{Q}=V(x^tMx) \subset \P_U^{n-1}$ over $U$. We write $e_1, \ldots, e_n$ for the coordinates on $\P_U^{n-1}$. 
			
			Since $P\in V(\det M, M_{nn})_\textup{red}$, the fiber $Q:=\mathcal{Q}_P$ is singular and the intersection $Q\cap V(e_{n})$ is also singular.  By Lemma~\ref{lemma:smoothC}, $M(P)$ has rank $n - 1$ so $Q$ is the cone of a smooth quadric over a point $v\in \PP^{n-1}$.  Then, since a hyperplane section of $Q$ is singular if and only if it contains the vertex point, the last coordinate of $v$ must be $0$.  As the vertex point of $Q = \mathcal{Q}_P$ can also be viewed as an element of $\ker M(P)$, this implies that $M_{in}$ vanishes at $P$ for $i = 1, \ldots, n$.  Thus 
		\[
		 	V(\det M, M_{nn})_{\textup{red}} = V(M_{1n},\ldots,M_{nn})_{\textup{red}}.
		\]

		To prove the second claim concerning the valuations, 
we pass to the completion at $P$ and work over $\overline{k}_0 \llbracket x,y \rrbracket$ with maximal
ideal $\mm=(x,y)$.  We change coordinates on $\PP^n$ such that $V(e_{n})$ stays fixed and $v = (1:0:\cdots:0)$.  Then we may write 
		\[
			M = 
			\left(\begin{array}{c|ccc}
			    0 & 0 & \cdots & 0 \\ \hline
			    0  & \multicolumn{3}{c}{\multirow{3}{*}{\raisebox{-3mm}{\scalebox{1.5}{$B_0$}}}} \\
				{\vdots} & & &\\
			    0 & & & 
			  \end{array}\right) + 
			\left(\begin{array}{c|ccc}
			    a & v_2 & \cdots & v_n \\ \hline
			    v_2  & \multicolumn{3}{c}{\multirow{3}{*}{\raisebox{-3mm}{\scalebox{1.5}{$B$}}}} \\
				{\vdots} & & &\\
			    v_n & & & 
			  \end{array}\right),
		\]
		where $a, v_i$ and the entries of $B$ are all in $\mm$, and $B_0\in M_{n-1}(\overline{k}_0)$.  Therefore
		\[
			\det M \equiv a \det(B_0) \bmod \mm^2.
		\]
		Since $C$ is smooth, $\det(B_0)$ must be a unit.  Now we pass to the complete discrete valuation ring $\OO_{C, P}\isom \overline{k}_0\llbracket x\rrbracket$.  In this ring, $B_0$ is still invertible, therefore, by the Gram-Schmidt process, we may diagonalize $B_0 + B$ over $\overline{k}_0\llbracket x\rrbracket$ while still preserving $V(e_n)$.  Hence, we may assume that $M$ is of the form
		\[
			\begin{pmatrix}
				a & v_2 & v_3 &\cdots & v_n\\
				v_2 & u_2 & 0 & \cdots & 0\\
				v_3 & 0 & u_3 & \cdots & 0\\
				\vdots & \vdots & \vdots & \ddots & \vdots\\
				v_n & 0 & 0 &\cdots & 1
			\end{pmatrix},
		\]
		where $a, v_i \in x \overline{k}_0\llbracket x\rrbracket$ and $u_i\in  \overline{k}_0\llbracket x\rrbracket^{\times}.$
		Then we have
		\begin{align*}
			\det M & = au_2\cdots u_{n-1} - v_2^2 \prod^{n-1}_{j \neq 2}u_j -v_3^2 \prod^{n-1}_{j \neq 3}u_j - \cdots - v_{n-1}^2 \prod^{n-1}_{j \neq n-1}u_j - v_n^2 \prod^{n-1}_{j =1}u_j\\
			M_{nn} & = au_2\cdots u_{n-1} - v_2^2 \prod^{n-1}_{j \neq 2}u_j -v_3^2 \prod^{n-1}_{j \neq 3}u_j - \cdots - v_{n-1}^2 \prod^{n-1}_{j \neq n-1}u_j\\
			M_{in} & = \pm v_nv_i \prod_{j \neq i}u_j, \quad i = 2, \ldots, n-1\\
			M_{1n} & = \pm v_n\prod_{j}u_j
		\end{align*}
		In $\OO_C$, $\det M = 0$ so $M_{nn} = M_{nn} - \det M = v_n^2\prod_{j=1}^{n-1}u_n$.
		Hence, 
		\[
			v_P(M_{nn}) = v_P(-u_1\cdots u_{n-1}v_n^2) = 2v_P(v_n)
		\]
		 and $\min_iv_P(M_{in}) = v_P(M_{1n})= v_P(v_n)$, which completes the proof.
	\end{proof}
	\subsection{Determining the Weil divisor corresponding to $\calL$}
	\label{sec:WeilDivisor}
		
		To determine the divisor $D$ such that $\L \simeq \O_C(D),$ we will twist $\L$ by some $\O_{\P^2}(i)$ to obtain a line bundle $\mathcal N$ which is effective and has a summand $\O_{\P^2}$ in its resolution.  Then we will apply the general statement below.
		\begin{proposition}  
			Let $\mathcal{N}$ be an effective line bundle of $C$ that has a presentation of the form
			\[
				0\rightarrow \bigoplus_{i=1}^n \O(-\beta_i)
		  	  	\stackrel{M}{\rightarrow} 
				\left(\bigoplus_{i=1}^{n-1} \O(-\alpha_i)\right) \oplus \O 
				\rightarrow \mathcal N \rightarrow 0.
			\]
			Let $e_n$ denote the section of $\mathcal N$ corresponding to $1\in \O$ in the last direct summand.  Then $e_n$ vanishes on the	$(n-1)\times (n-1)$ minors $M_{ni}$ obtained by removing the bottom row	of $M$ and one of the columns.  In particular, $\mathcal N \simeq \O_C(D)$ where $D = V(M_{n1},\ldots,M_{nn})\subset C$.
		\end{proposition}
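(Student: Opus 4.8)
The plan is to prove the isomorphism $\mathcal N \simeq \O_C(D)$ by a local computation at each closed point of $C$, reducing everything to the order of vanishing of the distinguished section $e_n$. First I would note that, since $\mathcal N$ is annihilated by the ideal sheaf of $C$ and $(-)\otimes\O_C$ is right exact, applying it to the given presentation identifies $\mathcal N$ with $\coker(\overline M)$, where $\overline M := M|_C$ is the reduction of $M$ modulo the ideal of $C$. I would also record at the outset that $D = V(M_{n1},\dots,M_{nn})$ is a genuine effective divisor on the smooth curve $C$, not all of $C$: indeed $M_{nn}$ does not vanish identically on $C$ because (for the resolutions produced by Theorem~\ref{thm:Catanese}) $\deg M_{nn} = \deg(\det M) - d_n < \deg(\det M)$. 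With these reductions in place, the proposition reduces to the pointwise assertion that $\ord_P(e_n) = \min_i v_P(M_{ni})$ for every $P\in C$. Indeed, the right-hand side is precisely the length at $P$ of the subscheme cut out by the ideal $(M_{n1},\dots,M_{nn})$ in the discrete valuation ring $\O_{C,P}$, so the pointwise assertion gives $\divv(e_n) = D$, whence $\mathcal N \simeq \O_C(\divv e_n) = \O_C(D)$.

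For the local step I would fix $P\in C$, trivialize all the bundles over $\Spec\O_{C,P}$, and write the restricted presentation as an exact sequence
\[
    \O_{C,P}^{n} \xrightarrow{\ \overline M\ } \O_{C,P}^{n} \xrightarrow{\ q\ } \O_{C,P} \to 0 ,
\]
where $q = (h_1,\dots,h_n)$ is the row vector recording the images of the standard basis vectors; thus $e_n = h_n$ and $q\,\overline M = 0$. The key observation is that, since $M$ is symmetric, $q^{\,t}$ lies in $\ker\overline M$, which is a one-dimensional subspace over $\kk(C)$ (because $\coker\overline M = \mathcal N$ has rank one); and the $n$-th column of $\operatorname{adj}(\overline M)$ also lies in $\ker\overline M$, by $\overline M\cdot\operatorname{adj}(\overline M) = \det(\overline M)\,I = 0$. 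The entries of that column are $(-1)^{i+n}M_{ni}$ for $i=1,\dots,n$, so it is nonzero since $M_{nn}\not\equiv 0$ on $C$. Comparing the two spanning vectors produces $\lambda\in\kk(C)^\times$ with $h_i = (-1)^{i+n}\lambda\,M_{ni}$ for all $i$, and in particular $h_n = \lambda\,M_{nn}$. To finish, I would use that $q$ is surjective onto $\O_{C,P}$, so $\min_i v_P(h_i) = 0$ and hence $v_P(\lambda) = -\min_i v_P(M_{ni})$; this gives $\ord_P(e_n) = v_P(h_n) = v_P(M_{nn}) - \min_i v_P(M_{ni})$. To convert this into the desired identity I would invoke Proposition~\ref{Ptangency} with $j = n$ — using that $M_{in} = M_{ni}$ by symmetry of $M$ — which supplies $v_P(M_{nn}) = 2\min_i v_P(M_{ni})$, and the pointwise assertion follows.

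I expect the main obstacle to be getting the identifications exactly right, not any hard estimate: one must carefully match the distinguished section $e_n$ (the image of $1\in\O$) with the last coordinate of the presenting row vector $q$, so that $\divv(e_n)$ is controlled by the minors obtained by deleting the \emph{bottom} row of $M$, and one must check that $\operatorname{adj}(\overline M)$ retains a nonzero relevant column over $\O_{C,P}$. Everything else — the adjugate identity and the arithmetic in $\O_{C,P}$ — is routine, and the only genuinely external input is Proposition~\ref{Ptangency}, which is precisely what upgrades the crude containment $\divv(e_n) \ge V(M_{n1},\dots,M_{nn})$ into an equality. A self-contained alternative that avoids Proposition~\ref{Ptangency} is to put $\overline M$ into Smith normal form $\overline M = UDV$ over the discrete valuation ring $\O_{C,P}$ — where $D = \operatorname{diag}(1,\dots,1,0)$ since $\coker\overline M$ is free of rank one — and to use $\operatorname{adj}(D) = E_{nn}$ to read off $\ord_P(e_n) = v_P((U^{-1})_{nn}) = \min_i v_P(M_{ni})$ directly; invoking Proposition~\ref{Ptangency} is just shorter.
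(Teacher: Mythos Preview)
Your argument is correct in the intended context, but it takes a genuinely different and more circuitous route than the paper's proof, and it imports two hypotheses that the paper's argument does not need.

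The paper's proof is a one-line linear-algebra observation: the section $e_n$ vanishes at a point precisely when the standard basis vector $e_n$ lies in the image of $M$, i.e.\ when the system $Mx=e_n$ is solvable. Since $\overline M$ has rank exactly $n-1$ along $C$ (this follows already from $\coker\overline M\cong\mathcal N$ being locally free of rank one, without symmetry), solvability is equivalent to $\rank(M\mid e_n)\le n-1$, whose defining ideal is generated by the maximal minors of the augmented matrix $(M\mid e_n)$. Expanding those minors along the column $e_n$ gives exactly $\det M$ and the $M_{ni}$, so on $C$ the vanishing scheme of $e_n$ is $V(M_{n1},\dots,M_{nn})$. (Scheme-theoretically this is the Fitting-ideal statement $\mathrm{Fitt}_0\bigl(\coker(\overline M\mid e_n)\bigr)=(M_{n1},\dots,M_{nn})$, and that cokernel is cyclic.) No symmetry and no appeal to Proposition~\ref{Ptangency} are required.

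Your main argument, by contrast, assumes $M$ symmetric in order to put $q^{t}$ into $\ker\overline M$ rather than $\ker\overline M^{\,t}$, and then invokes Proposition~\ref{Ptangency} to convert $v_P(M_{nn})-\min_i v_P(M_{ni})$ into $\min_i v_P(M_{ni})$. Both inputs are available in the paper's setting (the $M$ you care about is the symmetric one from Theorem~\ref{thm:Catanese}), so your proof is valid for the application in Corollary~\ref{cor:WeilDivisorOfL}; but as the proposition is stated, $M$ is not assumed symmetric, so strictly speaking your main proof does not establish it in full generality. Your Smith-normal-form alternative does work without symmetry and is a perfectly good self-contained argument; it is essentially a local unpacking of the same Fitting-ideal identity the paper is using, just written out in coordinates. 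The payoff of the paper's approach is brevity and generality; the payoff of yours is that it makes the multiplicity computation completely explicit, which is pedagogically useful but not needed here.
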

		\begin{proof}
		Consider $\mathcal N$ as given by this presentation.  We will write $e_n$ as the usual standard basis vector. The section $e_n$ will vanish exactly when $e_n \in \im M$.  This occurs when we can solve the equation $Mx =e_n$.  The rank of $M$ is always at least $n-1$ by Lemma~\ref{lemma:smoothC}, and is exactly	$n-1$ on $C$.  Then $Mx=e_n$ has a solution at a point in $C$ if and only if the rank of the $n	\times (n+1)$ matrix $(M | e_n)$ equals the rank of $M$ which is $n-1$.  Hence we need the locus where $\rank (M|e_n) \leq n-1$, which is exactly given by the vanishing of all the minors $M_{n1},\ldots,M_{nn}$.
		\end{proof}
		{\begin{cor}\label{cor:WeilDivisorOfL}
			$\mathcal{L}\simeq \O_C(V(M_{1n}, \ldots, M_{nn}) - \frac12(e - d_n - \e)L)$.
		\end{cor}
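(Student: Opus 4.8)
The plan is to reduce the statement to the immediately preceding Proposition by twisting $\calL$ by a suitable line bundle so that its symmetric resolution acquires a trivial summand. Concretely, I would set $\calN := \calL \otimes \O_{\P^2}(a_n)$ where $a_n = \frac12(e - d_n - \e)$. Since $L$ is a line in $\P^2$, the restriction $\O_{\P^2}(a_n)|_C$ is $\O_C(a_n L)$, so $\calL \simeq \calN \otimes \O_C(-a_n L)$; hence it suffices to prove $\calN \simeq \O_C\bigl(V(M_{1n},\dots,M_{nn})\bigr)$.

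To get there I would twist the symmetric resolution of Theorem~\ref{thm:Catanese} by $\O_{\P^2}(a_n)$, producing the short exact sequence
\[
0 \to \bigoplus_{i=1}^n \O_{\P^2}(a_i + a_n - e + \e) \stackrel{M}{\longrightarrow} \bigoplus_{i=1}^n \O_{\P^2}(a_n - a_i) \to \calN \to 0
\]
with the same matrix $M$. Here I would invoke our ordering convention that $d_i = \deg m_{ii}$ is weakly decreasing: then $d_n$ is minimal, so $a_n = \frac12(e - d_n - \e)$ is the largest of the $a_i$, every $a_n - a_i$ is nonnegative, and the $i=n$ summand of the middle term is exactly $\O_{\P^2}$. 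Relabelling so this trivial summand is last, the sequence has precisely the shape demanded by the preceding Proposition, with $\alpha_i = a_i - a_n$ and $\beta_i = e - \e - a_i - a_n = \frac12(d_i + d_n) \geq 0$.

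Before applying that Proposition I must check $\calN$ is effective. I would do this by noting that the trivial summand gives a global section $e_n$ of $\calN$, which is nonzero: if $e_n$ lay in $\im M$ then $M x = e_n$ would be solvable over $\O_{\P^2}$, forcing the irreducible form $\det M$ (the equation of the smooth curve $C$) to divide each minor $M_{ni}$, which is impossible on degree grounds since $\deg M_{ni} = e - \frac12(d_n + d_i) < e$. Equivalently one can read effectiveness off the long exact cohomology sequence, since the $\O_{\P^2}(a_n - a_i)$ have sections and the $\O_{\P^2}(-\beta_i)$ do not. With effectiveness in hand, the preceding Proposition gives $\calN \simeq \O_C\bigl(V(M_{n1},\dots,M_{nn})\bigr)$; using the symmetry $M_{ni} = M_{in}$ of the minors of the symmetric matrix $M$, this is $\O_C\bigl(V(M_{1n},\dots,M_{nn})\bigr)$, and untwisting via $\calL \simeq \calN \otimes \O_C(-a_n L)$ together with $a_n = \frac12(e - d_n - \e)$ yields the claim. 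The only genuinely delicate point is matching the twisted resolution exactly to the hypotheses of the preceding Proposition — in particular pinning down the right twist $a_n$ from the ordering of the $d_i$ and checking effectiveness of $\calN$ — everything else being degree bookkeeping.
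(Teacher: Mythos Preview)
Your proposal is correct and follows exactly the approach of the paper: twist by $a_n$ so that the resolution has a trivial summand, apply the preceding Proposition to $\calN = \calL \otimes \OO_C(a_n L)$, and then untwist using $2a_n = e - d_n - \e$. The paper's proof is a single sentence; you have simply spelled out the degree bookkeeping and the effectiveness check that the paper leaves implicit.
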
}
		\begin{proof}
			{We apply the Proposition to $\calL\otimes\OO_{C}(a_nL)$ and note that $2a_n = e - d_n - \e$.}
		\end{proof}
		\begin{example}
			Consider the resolution
			\[
				 0 \rightarrow \O_{\P^2}(-2) \stackrel{M}{\rightarrow}
				 \O_{\P^2}(-1)^3 \rightarrow \L \rightarrow 0,
			\]
			where
			\[
				M = \begin{pmatrix} 
				ax & bz & by \\
				bz & ay & bx \\
				by & bx & az
				\end{pmatrix}.
			\]
			For generic $a,b$, the curve $C = V(\det M) = V((a^3 +2b^3) xyz -ab^2(x^3+y^3+z^3))$ is a smooth genus $1$ curve.  The conic $E = V(M_{33}) = V(a^2xy-b^2z^2)$ is tangent to $C$ at 3 points and the radical of the ideal $(M_{33},\det M)$ is $(M_{13},M_{23},M_{33})$.  The primary decomposition of this ideal gives the point $P_1=[b : b : a]$ and $P_2 \cup P_3 = V(a(x+y)+bz,a^2y+abyz+b^2z)$.  Hence $\L(1)=\O_C(P_1+P_2+P_3)$.  To find the divisor which represents $\L$,  we simply subtract a line $L$ which we can choose to be $L=V(a(x+y)+bz)$.  This removes $P_2 \cup P_3$ and subtracts the third point $P_4 =[1:-1 : 0]$ in $L \cap C$. Hence $\L=\O_C(P_1-P_4)$.
		\end{example}

	\subsection{Proof of Theorem~\ref{thm:Compatibility}}\label{sec:Proof}
		By Propositions~\ref{prop:kerToBr} and~\ref{prop:CompatibilityAXAU}, if $\phi$ and $\scrA_{U}$ agree on $(\Pic C/\Z L)[2]$ then so do $\psi$ and $\scrA_X$.  Thus, it suffices to show that $\phi(D) = \scrA_{U}(D)_{k}$, which, by~\eqref{eq:BigDiagram}, we may do so by comparing residues.  Since $\phi(D), \scrA_{U}(D)_{k}$ are contained in $\Br U$, it suffices to show that $\partial_C(\phi(D)) = \partial_C(\scrA_U(D)_k)$ as~\eqref{eq:BigDiagram} then implies that the residues agree at $L$.
	
		Fix a divisor $D$ which represents an element of $(\Pic C/\Z L)$.  Without loss of generality, we may assume that $2D \sim \e L$ where $\e = 0$ or $1$.  By construction, 
		\[
			\divv(\partial_C(\phi(D))) = 2D - \e(L\cap C).
		\]
		Combining Propositions~\ref{prop:AUResidues},~\ref{Ptangency} and Corollary~\ref{cor:WeilDivisorOfL}, we have
		{\begin{align*}
			\divv(\partial_C(\scrA_U(D)_k)) & =  \divv(M_{nn}/\ell^{e - d_n})\\
			& =  2V(M_{1n}, \ldots, M_{nn}) + (d_n - e)(L\cap C)\\
			& = 2(D + \frac12(e - d_n - \e)(L\cap C)) + (d_n - e)(L \cap C)\\
			& = 2D -\e(L\cap C).
		 \end{align*}}
		 Since $k$ is a separably closed field of characteristic not $2$, if the divisors of residues of $2$-torsion classes agree, then the residues agree.  This completes the proof. \qed
	
	\begin{remark}
		If we fix a partition $d = \sum d_i$, we can look at the scheme of quotients of the vector bundle $V = \bigoplus \O(-a_i)$ with fixed Hilbert polynomial as determined by the resolution corresponding to the partition $d$.  There will be a subscheme of this Quot scheme that is described by symmetric matrices, and these will determine line bundles on curves.  We can form an open subscheme by demanding that the curves be smooth.
		Conversely, the set symmetric matrices with degree determined by the partition forms an open subvariety in the complete linear system of divisors $\P(V)$ of a fixed linear equivalence class.  There will be an algebraic map from the variety of matrices to the scheme of line bundles.  This map will be equivariant for the action of $\Aut(\P^2)$ and will be defined over our basefield $k_0$.  Catanese's result says that this map is surjective over $k_0$ points.
		We remark that it would be interesting to use the analysis in this paper to consruct a scheme structure on the 2-torsion Brauer group of $X$.
	\end{remark}

\section{Degree $2$ K3 surfaces}\label{sec:examples}

	We now specialize to the case where $C$ is a smooth sextic curve, and $X$ is a double cover of $\PP^2$ branched over this curve, i.e., a degree two K3 surface. We will study the possible symmetric resolutions of the elements of $(\Pic C/\Z L)[2]$.  In particular, we will show that if there exists a resolution of an element of $(\Pic C/\Z L)[2]$ that gives rise to a partition that is different from $1 + 1 + 1 + 1 + 1 + 1$, $2 + 2 + 2$, $ 3 + 1 + 1 + 1$, then $X$ has Picard rank greater than $1$.  This proves that Theorem~\ref{thm:geometricBr} implies van Geemen's result.
	
	  {We note that the analysis for the generic partitions has been done previously, including by Beauville~\cite{Beauville-Determinantal} and van Geemen~\cite{vanGeemen-Degree2}.}  The resolutions corresponding to partitions $3 + 1 + 1 + 1$ and $2 + 2 + 2$ have been used for arithmetic implications in~\cite{HVV} and~\cite{HassettVA} respectively.

We have the following possible partitions of 6 into only even or odd parts.
\begin{center}\begin{tabular}{ccccc}
	partition & parameter count & $\L\otimes \L$ & quadric bundle & Section \\
	6 		& 19 & $\O_C$ & $\P^2$ & \ref{sec:trivial}\\
	4 + 2 	& 18 & $\O_C$ & $X$ & \ref{sec:2plus4}\\
2+2+2 & 19 & $\O_C$ & $(2,2) \subset \P^2 \times \P^2$ & \ref{sec:all2s}\\
5 + 1 & 18 & $\O_C(1)$ & $X$ & \ref{sec:1plus5}\\
3 + 3 & 18 & $\O_C(1)$ & $X$ & \ref{sec:3plus3}\\
3 + 1 + 1 + 1 & 19 & $\O_C(1)$  & $\Bl_\Pi X \subset \P^5 \quad \deg X =3$ & \ref{sec:cubic4fold}\\
1+1+1+1+1+1 & 19  & $\O_C(1)$ & $(2,1) \subset \P^5 \times \P^1$ &\ref{sec:all1s}
\end{tabular}
\end{center}

\subsection{$6=6$}\label{sec:trivial}
	This case corresponds to the trivial case $\L = \O_C$. 

\subsection{$6 = 4 + 2$}\label{sec:2plus4}
	In this case the degrees of $M$ are
		$$\deg(M) = \begin{pmatrix}
		4 & 3 \\
		3 & 2 \end{pmatrix}$$
	and the resolution of $\L$ has the form
		\[ 
			0 \rightarrow  \O(-5) \oplus \O(-4) \rightarrow \O(-1)\oplus \O(-2) 
			\rightarrow \L \rightarrow 0.
		\]
	The quadric bundle over $\P^2$ is simply $X \rightarrow \P^2$, the double cover of $\P^2$ ramfied on the sextic $C$.  Therefore the even Clifford algebra is  $\O_X$ and total Clifford algebra is an order with centre $\P^2$ and ramified on $C$.  In particular, $\calA_X(\calL)$ is trivial in $\Br X$.

	By Proposition~\ref{Ptangency}, the sextic is tangent to the conic and  quartic defined by the diagonal entries.  Since the conic is rational, it must split when pulled back to $X$.  Therefore $X$ has Picard rank at least $2$.  
	
	The line bundle $\L(1)$ has $h^0(\L(1))=1$ so we obtain an effective divisor $D$ of degree 6.  Since $2D = 2H,$ we see that $D$ is exactly the reduced locus of the intersection of $C$ with the conic.
	
	A parameter count shows that there is an $18$-dimensional space of sextic curves $C$ that are obtained this way. This gives a dense open subvariety of all sextics with a tangent conic or, equivalently, a dense open subvariety of all degree $2$ K3 surfaces whose intersection lattice contains the $2\times2$ block
	\[
		\left[
		\begin{array}{cc}
			2 &   2\\
			2 & -2
		\end{array}\right].
	\]
	
\subsection{$6=2+2+2$}\label{sec:all2s}
In this case the degrees of $M$ are:
$$\deg(M) = \begin{pmatrix}
2 & 2 & 2 \\
2 & 2 & 2 \\
2 & 2 & 2 \end{pmatrix}$$
and the resolution of $\L$ has the form 
$$ 0 \rightarrow \O(-4)^3 \rightarrow \O(-2)^3 \rightarrow \L \rightarrow 0.$$
The quadric bundle is a $(2,2)$ divisor in $\P^2 \times \P^2$.  The even Clifford algebra has rank 4 and is an order over $\P^2$ ramified on the sextic.
The total Clifford algebra has rank 8 and is an Azumaya algebra on the
double cover $X$.  The generic sextic will have $2^{20}-1 = \# \Pic C[2]-1$
such line bundles and thus $2^{20} - 1$ many ways to write the sextic as such a determinant.

\subsection{$6 = 5 + 1$}\label{sec:1plus5}
	In this case, the entries of the symmetric matrix have degrees
	$$ \deg M =\begin{pmatrix} 5 & 3 \\ 3 & 1 \end{pmatrix}.$$
	This gives rise to a resolution of $\L$ of the form
		\[
		 0 \rightarrow \OO_{\PP^2}(-5) \oplus \OO_{\PP^2}(-3)  
		 \stackrel{M}{\rightarrow} \OO_{\PP^2}\oplus \OO_{\PP^2}(-2) 
		 \rightarrow \calL \rightarrow 0
		\]
	so $\calL(1)$ is an odd theta characteristic.
	As in the $4 + 2$ case, the quadric bundle over $\P^2$ is simply $X \rightarrow \P^2$, the double cover of $\P^2$ ramfied on the sextic $C$.  Therefore the even Clifford algebra is  $\O_X$ and total Clifford algebra is an order with centre $\P^2$ and ramified on $C$.  In particular, $\calA_X(\calL)$ is trivial in $\Br X$.

	The sextic $C$ admits a tritangent, and a tangent quintic, again by Proposition~\ref{Ptangency}.  The tritangent line implies that $\rank \Pic X \geq 2$.  A parameter count shows that we obtain a dense open subvariety of all sextics with a tritangent line or, equivalently, a dense open subvariety of all degree $2$ K3 surfaces whose intersection lattice contains the $2\times 2$ block
	\[
		\left[\begin{array}{cc}
			2 & 1\\
			1 & -2
		\end{array}
		\right].
	\]

\subsection{$6=3+3$}\label{sec:3plus3}
	{
	Here the matrix has degrees 
	$$\deg M =\begin{pmatrix} 3 & 3 \\ 3 & 3 \end{pmatrix}.$$
	Therefore the resolution of $\L$ is of the form 
	$$ 0 \rightarrow \OO_{\PP^2}(-5)^2 \stackrel{M}{\rightarrow} \OO_{\PP^2}(-1)^2 \rightarrow \calL \rightarrow 0$$
	and $\calL(1)$ is an even theta characteristic.  As above, the quadric bundle is $X\to \PP^2$ and the Brauer class is trivial.
	
	Since the quadric bundle corresponding to $M$ is exactly $X$, we may view $X$ as a $(2,3)$ hypersurface in $\P^1 \times \P^2$.  These surfaces admit an elliptic fibration via the projection to $\P^1$.  This already implies that the Picard rank of $X$ is at least $2$, but we may also see it more explicitly.  Let $a,b,c$ be the upper triangular entries of $M$ so $C = V(ac - b^2)$.  Then consider the genus $1$ curve $Z = V(a)$.  Since $ac - b^2$ is a square modulo $a$, $Z$ splits when pulled back to $X$ thereby giving a divisor that is algebraically independent from the hyperplane class.
	
	A parameter count shows that we obtain a dense open subvariety of all degree $2$ K3 surfaces whose intersection lattice contains the $2\times 2$ block
	\[
		\left[\begin{array}{cc}
			2 & 3\\
			3 & 0
		\end{array}
		\right].
	\]
	Note that this is not a dense open subvariety of sextics that have a cubic which is everywhere tangent, since a tangent cubic is not enough to guarantee a jump in Picard rank.}

\subsection{$6 = 3 + 1 + 1 + 1$}\label{sec:cubic4fold}
	In this case 
	$M$ is a symmetric matrix with degrees 
	\[
		\deg M =
		\begin{pmatrix} 
			3 & 2 & 2 & 2 \\
			2 & 1 & 1 & 1 \\
			2 & 1 & 1 & 1 \\
			2 & 1 & 1 & 1
		\end{pmatrix}.
	\]
	Hence, we have a resolution of $\L$ of the form
	\[
		 0 \rightarrow \OO_{\PP^2}(-4) \oplus \OO_{\PP^2}(-3)^3  
		 \stackrel{M}{\rightarrow} \OO_{\PP^2}(-1)\oplus\OO_{\PP^2}(-2)^3  
		 \rightarrow \calL \rightarrow 0,
	 \]
	which shows that $\L(1)$ is an odd theta characteristic.

	In this case we naturally obtain a cubic fourfold as described in Example~\ref{ex:GenericPartitions}.  We can realize the K3 surface $X$ together with its double cover map to $\PP^2$ in the Stein factorization of the map from the maximal isotropic Grassmanian of the quadric bundle to the base.  Also, as before, we obtain a Brauer class on the K3 surface from the even Clifford algebra of the quadric bundle.

\subsection{$6=1+1+1+1+1+1$}\label{sec:all1s}
	Here the matrix $M$ is given by a $6 \times 6$ matrix of linear forms 
on $\PP^2$.  In this case we have a resolution of $\L$ of the form
\[
 0 \to \OO_{\PP^2}(-3)^6 \stackrel{M}{\rightarrow} \OO_{\PP^2}(-2)^6 \rightarrow \calL \rightarrow 0,
\]
thus $\calL(1)$ is an even theta characteristic.

As explained in Example~\ref{ex:GenericPartitions}, we obtain a $(2,1)$ hypersurface in $\PP^5\times\PP^2$, and projection on the second factor gives us a quadric bundle over $\PP^2$.  Here, as above, we can realize the K3 surface $X$ together with its double cover map to $\PP^2$ in the Stein factorization of the map from the maximal isotropic Grassmanian of the quadric bundle to the base $\PP^2$.

The $(2,1)$ hypersurface in $\PP^5\times\PP^2$ can also be thought of as a net of quadrics in $\PP^5$; the base locus of this net is a degree $8$ K3 surface $Y$.  It is well known (see, for example,~\cite{KuzQuad}) that $Y$ is derived equivalent to $X$ twisted by the Brauer class obtained from $M$.

	\appendix
	 \section{Combinatorics of Proposition \ref{Pdimcount} (by Hugh Thomas)}

 We complete the proof of Proposition \ref{Pdimcount} by showing that 
 the bound on the dimension of the moduli space of 
 sheaves $\mathcal L$ given by Proposition \ref{Pdimcount} is always at most the dimension of the
 moduli space of plane curves, up to projective isomorphism.  Furthermore, we characterize the cases of equality.  
 This all follows form the following combinatorial proposition.
 \begin{prop}\label{Pcombinatorics}

 Let $e$ be a positive integer, and let $d_1 + \cdots + d_n$ be a partition of $e$ such that the $d_i$ are all positive integers with the same parity, 
 forming a weakly decreasing sequence.  Let $a_i = (e - d_i - \e)/2$, where $\e$ is either $0$ or $1$, chosen so that 
 $a_i$ is an integer.  Then
 \[
  \sum_{1 \leq i \leq j \leq n} \binom{e-a_i-a_j-\e+2}{2} - \sum_{1
   \leq i , j \leq n} \binom{a_i-a_j+2}{2} -8 \leq \binom{e+2}{2} - 9.
   \]
 Furthermore, equality holds exactly when the partition $d_1 + \cdots + d_n$ is of the form
 $1 + \cdots + 1$, $2 + \cdots + 2$, $3 + 1 + \cdots + 1$, or simply $e$.
 \end{prop}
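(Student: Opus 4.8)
The plan is to convert the displayed inequality into a single quadratic inequality in the invariants $(e,n,\sum_i d_i^2)$ of the partition and then to settle it using only the integrality of the parts. Since $a_i=(e-d_i-\e)/2$, one has $e-a_i-a_j-\e=(d_i+d_j)/2$ and $a_i-a_j=(d_j-d_i)/2$, so that
\[
	P:=\sum_{1\le i\le j\le n}\binom{e-a_i-a_j-\e+2}{2}\ -\ \sum_{1\le i,j\le n}\binom{a_i-a_j+2}{2}
\]
depends only on $e$, $n$, and $T:=\sum_{i=1}^n d_i^2$. Expanding $\binom{x+2}{2}=\tfrac12(x^2+3x+2)$ and using $\sum_i d_i=e$ together with the elementary identities $\sum_{i\le j}(d_i+d_j)^2=(n+2)T+e^2$ and $\sum_{i,j}(d_i-d_j)^2=2nT-2e^2$, a direct computation gives
\[
	P=\frac{(2-n)T}{8}+\frac{3e^2}{8}+\frac{3(n+1)e}{4}-\binom{n}{2}.
\]
Since $\binom{e+2}{2}-1=(e^2+3e)/2$, clearing denominators shows that the assertion $P-8\le\binom{e+2}{2}-9$ is equivalent to
\begin{equation*}
	(2-n)\,T\ \le\ e^2+6e-6ne+4n^2-4n.\tag{$\dagger$}
\end{equation*}

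\emph{Proof of $(\dagger)$.} If $n=1$ then $d_1=e$, $T=e^2$, and $(\dagger)$ reads $e^2\le e^2$. Suppose $n\ge 2$, so $2-n\le 0$, and split on the common parity of the $d_i$. If every $d_i$ is even, write $d_i=2c_i$ with $c_i\ge 1$ and put $m:=e/2=\sum_i c_i\ge n$; dividing $(\dagger)$ by $4$ turns it into $(2-n)\sum_i c_i^2\le m^2-3nm+3m+n^2-n$. As the $c_i$ are positive integers, $\sum_i c_i^2\ge\sum_i c_i=m$, whence (using $2-n\le 0$) $(2-n)\sum_i c_i^2\le(2-n)m$; and $(2-n)m\le m^2-3nm+3m+n^2-n$ rearranges to $0\le(m-n)(m-n+1)$, which holds since $m\ge n$. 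If every $d_i$ is odd, write $d_i=2g_i+1$ with $g_i\ge 0$ and put $s:=(e-n)/2=\sum_i g_i\ge 0$; substituting $e=2s+n$ and dividing by $4$ reduces $(\dagger)$ to $(2-n)\sum_i g_i^2\le s(s-n+1)$. As the $g_i$ are nonnegative integers, $\sum_i g_i^2\ge\sum_i g_i=s$, so $(2-n)\sum_i g_i^2\le(2-n)s$, and $(2-n)s\le s(s-n+1)$ rearranges to $0\le s(s-1)$, true for every integer $s\ge 0$.

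\emph{Equality and the main obstacle.} Equality in $(\dagger)$ forces equality in both estimates above. In the even case this forces $(m-n)(m-n+1)=0$, hence $m=n$, hence every $c_i=1$ and $d_i=2$ for all $i$, i.e.\ the partition $2+\cdots+2$. In the odd case it forces $s(s-1)=0$, i.e.\ $s\in\{0,1\}$: if $s=0$ then every $d_i=1$ (the partition $1+\cdots+1$), and if $s=1$ then exactly one $g_i$ equals $1$ and the rest vanish (the partition $3+1+\cdots+1$). Together with the case $n=1$ (the partition $e$), this is exactly the stated list, completing the proof. The only point requiring genuine care is the bookkeeping in Step 1 — verifying that both sums are genuinely functions of $e$, $n$, $T$ alone and pinning down the coefficients of $P$; once $(\dagger)$ is in hand the argument is forced, the pleasant surprise being that the crude bound $\sum x_i^2\ge\sum x_i$ — valid precisely because the entries are integers, not merely real — is already sharp enough to close the estimate, with its equality case isolating the four extremal partitions.
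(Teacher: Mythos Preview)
Your algebra is clean and the reduction to $(\dagger)$ is correct, but there is a genuine convention issue that creates a gap.  In the paper the binomials in the second sum stand for $h^0(\mathcal O_{\mathbb P^2}(a_i-a_j))$, so $\binom{a_i-a_j+2}{2}$ is taken to be $0$ whenever $a_i-a_j<0$; the paper's own proof uses this explicitly (``typically only one of the two contributions to the second sum \ldots\ is non-zero'').  You instead expand $\binom{x+2}{2}=\tfrac12(x^2+3x+2)$ as a polynomial, which for $a_i-a_j\le -3$ (equivalently $d_i-d_j\ge 6$) returns a \emph{positive} value where the intended term is $0$.  Concretely, for each unordered pair $i<j$ with $d_i-d_j=2k\ge 6$, your second sum exceeds the intended one by $\binom{k-1}{2}$.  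Since this sum is subtracted, your $P$ is \emph{smaller} than the true left-hand side by
\[
\Delta=\sum_{\substack{i<j\\ d_i-d_j\ge 6}}\binom{(d_i-d_j)/2-1}{2}\ \ge\ 0,
\]
and the inequality you establish, $P_{\mathrm{poly}}\le\binom{e+2}{2}-1$, does not by itself give $P_{\mathrm{poly}}+\Delta\le\binom{e+2}{2}-1$.  (For example, for the partition $7+1$ of $e=8$ one has $P_{\mathrm{poly}}=41$, $\Delta=1$, and the true LHS is $42$; here $42\le 44$ still holds, but your argument only yields $41\le 44$.)  To close the gap you would need to show that $\Delta$ never exceeds the slack in your inequality $(\dagger)$, which is additional nontrivial work.

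That said, for partitions whose parts differ by at most $4$ (in particular all four equality cases) your polynomial expansion agrees with the intended sum and your argument is complete and rather slicker than the paper's.  The paper proceeds differently: it pairs the $(i,j)$ and $(j,i)$ contributions, isolates a correction term $\sum_i\binom{f_i}{2}$ coming from repeated parts, reduces to showing
\[
\sum_i\binom{f_i}{2}+\sum_j\frac{\bigl(\sum_{i<j}d_i-3(j-1)\bigr)d_j}{2}\ \ge\ 0,
\]
and then runs a ``running average'' argument on the partial sums.  Your route via the single invariant $T=\sum d_i^2$ and the integer bound $\sum x_i^2\ge\sum x_i$ is more direct and gives the equality classification almost for free, but it needs the extra step of controlling $\Delta$ before it proves what the paper actually requires.
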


 \begin{proof}
 We observe that 

 $${e-a_i-a_j-\varepsilon+2 \choose 2} = \binom{\frac{d_i+d_j}{2} +2}{2}$$

 and 

 $$\binom {a_i - a_j +2}{2} = \binom{\frac{d_j-d_i}{2} +2}{2}$$

 We consider the contribution to the first sum on the left hand side in Proposition \ref{Pcombinatorics}
 from a fixed pair $i,j$ 
 (with $i\leq j$), and
 to the second sum from $i,j$ and, if $j\ne i$, also  $j,i$.  

 We note that typically only one of the two contributions to the second
 sum which we are presently
 considering is non-zero.  The one time when there are two non-zero contributions to
 the second sum is when $d_i=d_j$ but $i\ne j$.  In this case, both
 contributions are 1.  Let us write $f$ for the partition of $n$ whose
 parts record the number of times parts appear in $d$.  The number
 of times we obtain two contributions to the second sum is then
 $\sum \binom{ f_i}{2}$.
 We can therefore say:

 $$\sum_{i\leq j} {e-a_i-a_j-\varepsilon+2 \choose 2}-\sum_{i,j} \binom {a_i - a_j +2}{2} = \sum _{i\leq j} 
 \left(\binom{\frac{d_i+d_j}{2} +2}{2} - \binom{\frac{d_i-d_j}{2} +2}{2}\right) -
 \sum_i \binom{f_i}{2}$$

 The difference of binomial coefficients can be expanded as $(d_id_j+3d_j)/2$.  
 Therefore, we would like to prove that the following is non-negative, and to 
 characterize the cases when it equals zero:
 $$ \frac {e^2 +3e}{2} - \sum_{i\leq j} \frac{d_id_j +3d_j}{2} +\sum_i 
 \binom{f_i}{2}$$

 Expand $e$ as $\sum_i d_i$, and $e^2$ as $\sum d_i^2 + \sum_{i<j} 2d_id_j$.
 Split up the sum over $i\leq j$ in the previous expression into terms with $i$ and $j$
 equal, and terms with $i<j$.    
 The sum with $i=j$ cancels nicely with 
 elements from the expansion of the expression in $e$.  We are left with:

 $$\sum_{i<j} \frac{d_id_j -3d_j}{2} + \sum_i \binom{f_i}{2}$$ 

 We can rewrite this as:

 $$\sum_i \binom{f_i}{2} + \sum_{j} \frac{ (\sum_{i<j} d_i -3(j-1))d_j}{2} $$

 In the second sum, consider the contribution for a particular $j$ value. 
 We note that $(\sum_{i<j} d_i) - 3(j-1)$ is negative if the average over 
 the $d_i$ with $i<j$ is less than 3.  In order for this to happen,
 $d_{j-1}$ must be less than 3.  Let $j_0$ be the smallest position for
 which the contribution is strictly negative.  

 Suppose we are in the case that the parts of the partition are all even.  
 Then $d_{j_0-1}$ must be actually 2, and all the subsequent $d_j$ with
 $j_0-1\leq j \leq n$ must also be 2.  By our assumptions, the contribution
 for $j=j_0$ is $-1$, and for each successive $j$ with 
 $j_0\leq j \leq n$, we find that the contribution is $-(j-j_0 +1)$.
 However, we have at least $n-j+2$ parts which are all equal to 2.  Thus
 they contribute $\binom{n-j+2}{2}$ to $\sum \binom{f_i}{2}$, which exactly
 cancels out this total contribution.  The result is therefore non-negative.  
 In order for it to be zero, we must never have had a positive contribution
 to the second sum, which means the running average of the $d_i$'s must never
 have been above 3, except in the case that the partition has
 only one part.  Thus equality implies that $d_i=2$ for all $i$, or the partition
 has only one part.  

 Suppose we are in the case where the parts of the partition are all odd.
 Then $d_{j_0-1}$ must be actually 1, and all the subsequent $d_{t}$ with
 $j_0-1\leq j \leq n$ must also be 1.  The analysis goes through in exactly
 the same way, and we obtain non-negativity.  In the case of equality, we 
 conclude that the running average of the $d_i$'s
 must never have been more than 3, except in the case that the parition has
 only one part.  This means that, if the partition has more than one part,
 it must consist
 of a number of 3's, followed by a number of 1's.  However, if there were
 more than one 3, then this would contribute positively to $\sum \binom{f_i}{2}$,
 forcing the sum to be positive.  We are reduced to the cases that $d_i=1$
 for all $i$, and $d_1=3$, $d_i=1$ for $i>1$, plus the case that the partition has only one part.
 \end{proof}

	\begin{bibdiv}
		\begin{biblist}
               
		\bib{Abhyankar}{article}{
		   author={Abhyankar, Shreeram},
		   title={Tame coverings and fundamental groups of algebraic varieties. 
		   		I. Branch loci with normal crossings; Applications: Theorems of
				Zariski and Picard},
		   journal={Amer. J. Math.},
		   volume={81},
		   date={1959},
		   pages={46--94},
		   issn={0002-9327},
		   review={\MR{0104675 (21 \#3428)}},
		}

		\bib{ArtinMumford}{article}{
		   author={Artin, M.},
		   author={Mumford, D.},
		   title={Some elementary examples of unirational varieties which are 
		   		not rational},
		   journal={Proc. London Math. Soc. (3)},
		   volume={25},
		   date={1972},
		   pages={75--95},
		   issn={0024-6115},
		   review={\MR{0321934 (48 \#299)}},
		}

		\bib{BHPVdV}{book}{
		   author={Barth, Wolf P.},
		   author={Hulek, Klaus},
		   author={Peters, Chris A. M.},
		   author={Van de Ven, Antonius},
		   title={Compact complex surfaces},
		   series={Ergebnisse der Mathematik und ihrer Grenzgebiete. 3. Folge. A
		   Series of Modern Surveys in Mathematics [Results in Mathematics and
		   Related Areas. 3rd Series. A Series of Modern Surveys in 
		   Mathematics]},
		   volume={4},
		   edition={2},
		   publisher={Springer-Verlag},
		   place={Berlin},
		   date={2004},
		   pages={xii+436},
		   isbn={3-540-00832-2},
		   review={\MR{2030225 (2004m:14070)}},
		}

		\bib{Beauville-Determinantal}{article}{
		   author={Beauville, Arnaud},
		   title={Determinantal hypersurfaces},
		   note={Dedicated to William Fulton on the occasion of his 60th 
		   birthday},
		   journal={Michigan Math. J.},
		   volume={48},
		   date={2000},
		   pages={39--64},
		   issn={0026-2285},
		   review={\MR{1786479 (2002b:14060)}},
		}

		\bib{BCZ}{article}{
		   author={Brown, Gavin},
		   author={Corti, Alessio},
		   author={Zucconi, Francesco},
		   title={Birational geometry of 3-fold Mori fibre spaces},
		   conference={
		      title={The Fano Conference},
		   },
		   book={
		      publisher={Univ. Torino, Turin},
		   },
		   date={2004},
		   pages={235--275},
		   review={\MR{2112578 (2005k:14031)}},
		}     

		\bib{Cat}{article}{
		   author={Catanese, F.},
		   title={Babbage's conjecture, contact of surfaces, symmetric 
		   		determinantal varieties and applications},
		   journal={Invent. Math.},
		   volume={63},
		   date={1981},
		   number={3},
		   pages={433--465},
		   issn={0020-9910},
		   review={\MR{620679 (83c:14026)}},
		}

		\bib{Cat97}{article}{
		   author={Catanese, Fabrizio},
		   title={Homological algebra and algebraic surfaces},
		   conference={
		      title={Algebraic geometry---Santa Cruz 1995},
		   },
		   book={
		      series={Proc. Sympos. Pure Math.},
		      volume={62},
		      publisher={Amer. Math. Soc.},
		      place={Providence, RI},
		   },
		   date={1997},
		   pages={3--56},
		   review={\MR{1492517 (98m:14039)}},
		}

		\bib{ChanI}{article}{
		   author={Chan, Daniel},
		   author={Ingalls, Colin},
		   title={Conic bundles and Clifford algebras},
		   conference={
		      title={New trends in noncommutative algebra},
		   },
		   book={
		      series={Contemp. Math.},
		      volume={562},
		      publisher={Amer. Math. Soc.},
		      place={Providence, RI},
		   },
		   date={2012},
		   pages={53--75},
		   review={\MR{2905553}},
		}

		\bib{CreutzViray}{misc}{
		   author={Creutz, Brendan},
		   author={Viray, Bianca}
		   title={On Brauer groups of double covers of ruled surfaces},
		   note={preprint, {\tt arXiv:1306.3251}},
		   date={2013},
		}

		\bib{CTOP}{article}{
		   author={Colliot-Th{\'e}l{\`e}ne, J.-L.},
		   author={Ojanguren, M.},
		   author={Parimala, R.},
		   title={Quadratic forms over fraction fields of two-dimensional Henselian
		   rings and Brauer groups of related schemes},
		   conference={
		      title={Algebra, arithmetic and geometry, Part I, II},
		      address={Mumbai},
		      date={2000},
		   },
		   book={
		      series={Tata Inst. Fund. Res. Stud. Math.},
		      volume={16},
		      publisher={Tata Inst. Fund. Res.},
		      place={Bombay},
		   },
		   date={2002},
		   pages={185--217},
		   review={\MR{1940669 (2004c:14031)}},
		}

		\bib{deJong}{article}{
			author={de Jong, A. Johan},
			title={A result of Gabber},
			note={Available online at 
			\url{www.math.columbia.edu/~dejong/papers/2-gabber.pdf‎}}
		}

		\bib{Ford}{article}{
		   author={Ford, Timothy J.},
		   title={The Brauer group and ramified double covers of surfaces},
		   journal={Comm. Algebra},
		   volume={20},
		   date={1992},
		   number={12},
		   pages={3793--3803},
		   issn={0092-7872},
		   review={\MR{1191982 (93m:14013)}},
		}

		\bib{KuzQuad}{article}{
		    AUTHOR = {Kuznetsov, Alexander},
		     TITLE = {Derived categories of quadric fibrations and 
			 			intersections of
		              quadrics},
		   JOURNAL = {Adv. Math.},
		  FJOURNAL = {Advances in Mathematics},
		    VOLUME = {218},
		      YEAR = {2008},
		    NUMBER = {5},
		     PAGES = {1340--1369},
		      ISSN = {0001-8708},
		     CODEN = {ADMTA4},
		   MRCLASS = {14F05},
		  MRNUMBER = {2419925 (2009g:14019)},
		}
		
		\bib{CSAandGalC}{book}{
		   author={Gille, Philippe},
		   author={Szamuely, Tam{\'a}s},
		   title={Central simple algebras and Galois cohomology},
		   series={Cambridge Studies in Advanced Mathematics},
		   volume={101},
		   publisher={Cambridge University Press},
		   place={Cambridge},
		   date={2006},
		   pages={xii+343},
		   isbn={978-0-521-86103-8},
		   isbn={0-521-86103-9},
		   review={\MR{2266528 (2007k:16033)}},
		}
		
		\bib{Grothendieck1}{article}{
		   author={Grothendieck, Alexander},
		   title={Le groupe de Brauer. I. Alg\`ebres d'Azumaya et interpr\'etations
		   diverses},
		   language={French},
		   conference={
		      title={Dix Expos\'es sur la Cohomologie des Sch\'emas},
		   },
		   book={
		      publisher={North-Holland},
		      place={Amsterdam},
		   },
		   date={1968},
		   pages={46--66},
		   review={\MR{0244269 (39 \#5586a)}},
		}

		\bib{Grothendieck2}{article}{
		   author={Grothendieck, Alexander},
		   title={Le groupe de Brauer. II. Exemples et compl\'ements},
		   language={French},
		   conference={
		      title={Dix Expos\'es sur la Cohomologie des Sch\'emas},
		   },
		   book={
		      publisher={North-Holland},
		      place={Amsterdam},
		   },
		   date={1968},
		   pages={66--87},
		   review={\MR{0244271 (39 \#5586c)}},
		}

		\bib{Grothendieck}{article}{
		   author={Grothendieck, Alexander},
		   title={Le groupe de Brauer. III. Exemples et compl\'ements},
		   language={French},
		   conference={
		      title={Dix Expos\'es sur la Cohomologie des Sch\'emas},
		   },
		   book={
		      publisher={North-Holland},
		      place={Amsterdam},
		   },
		   date={1968},
		   pages={88--188},
		   review={\MR{0244271 (39 \#5586c)}},
		}

		\bib{Hartshorne}{book}{
		   author={Hartshorne, Robin},
		   title={Algebraic geometry},
		   note={Graduate Texts in Mathematics, No. 52},
		   publisher={Springer-Verlag},
		   place={New York},
		   date={1977},
		   pages={xvi+496},
		   isbn={0-387-90244-9},
		   review={\MR{0463157 (57 \#3116)}},
		}

  		\bib{HassettVA}{article}{
		   author={Hassett, Brendan},
		   author={V\'{a}rilly-Alvarado, Anthony}
		   title={Failure of the Hasse principle on general K3 surfaces},
		   
		   journal={J. Inst. Math. Jussieu},
		volume={12},
		   date={2013},
		   pages={853--877},		   
		}

		\bib{HVV}{article}{
		   author={Hassett, Brendan},
		   author={V{\'a}rilly-Alvarado, Anthony},
		   author={Varilly, Patrick},
		   title={Transcendental obstructions to weak approximation on general K3
		   surfaces},
		   journal={Adv. Math.},
		   volume={228},
		   date={2011},
		   number={3},
		   pages={1377--1404},
		   issn={0001-8708},
		   review={\MR{2824558 (2012i:14025)}},
		}
             
		   \bib{KnusQF}{book}{
		    AUTHOR = {Knus, Max-Albert},
		     TITLE = {Quadratic forms, {C}lifford algebras and spinors},
		    SERIES = {Semin\'arios de Matem\'atica [Seminars in Mathematics]},
		    VOLUME = {1},
		 PUBLISHER = {Universidade Estadual de Campinas, Instituto de 
		 				Matem\'atica,
		              Estat\'\i stica e Ci\^encia da Computa\c c\~ao, Campinas},
		      YEAR = {1988},
		     PAGES = {vi+135},
		   MRCLASS = {11Exx (11E88)},
		  MRNUMBER = {1099376 (92e:11026)},
		}

		\bib{Lam}{book}{
		   author={Lam, T. Y.},
		   title={Introduction to quadratic forms over fields},
		   series={Graduate Studies in Mathematics},
		   volume={67},
		   publisher={American Mathematical Society},
		   place={Providence, RI},
		   date={2005},
		   pages={xxii+550},
		   isbn={0-8218-1095-2},
		   review={\MR{2104929 (2005h:11075)}},
		}

		\bib{Liu}{book}{
			author={Liu, Qing},
			title={Algebraic geometry and arithmetic curves},
			series={Oxford Graduate Texts in Mathematics},
			volume={6},
			note={Translated from the French by Reinie Ern\'e;
			Oxford Science Publications},
			publisher={Oxford University Press},
			place={Oxford},
			date={2002},
			pages={xvi+576},
			isbn={0-19-850284-2},
			review={\MR{1917232 (2003g:14001)}},
		}

		\bib{Manin}{article}{
		   author={Manin, Yu. I.},
		   title={Le groupe de Brauer-Grothendieck en g\'eom\'etrie 
				diophantienne},
		   conference={
		      title={Actes du Congr\`es International des Math\'ematiciens},
		      address={Nice},
		      date={1970},
		   },
		   book={
		      publisher={Gauthier-Villars},
		      place={Paris},
		   },
		   date={1971},
		   pages={401--411},
		   review={\MR{0427322 (55 \#356)}},
		}

		\bib{M1}{article}{
		   author={Merkurjev, A. S.},
		   title={On the norm residue symbol of degree $2$},
		   language={Russian},
		   journal={Dokl. Akad. Nauk SSSR},
		   volume={261},
		   date={1981},
		   number={3},
		   pages={542--547},
		   issn={0002-3264},
		   review={\MR{638926 (83h:12015)}},
		}

		\bib{MilneCFT}{book}{
				author={Milne, James S.},
				title={Class Field Theory},
				note={Available online at 
				\url{http://www.jmilne.org/math/CourseNotes/cft.html}}
			}
			
		\bib{Milne}{book}{
		   author={Milne, James S.},
		   title={\'Etale cohomology},
		   series={Princeton Mathematical Series},
		   volume={33},
		   publisher={Princeton University Press},
		   place={Princeton, N.J.},
		   date={1980},
		   pages={xiii+323},
		   isbn={0-691-08238-3},
		   review={\MR{559531 (81j:14002)}},
		}

		\bib{Saltman}{book}{
		   author={Saltman, David J.},
		   title={Lectures on division algebras},
		   series={CBMS Regional Conference Series in Mathematics},
		   volume={94},
		   publisher={Published by American Mathematical Society, Providence, RI},
		   date={1999},
		   pages={viii+120},
		   isbn={0-8218-0979-2},
		   review={\MR{1692654 (2000f:16023)}},
		}			
			
		\bib{Serre}{book}{
		   author={Serre, Jean-Pierre},
		   title={Local fields},
		   series={Graduate Texts in Mathematics},
		   volume={67},
		   note={Translated from the French by Marvin Jay Greenberg},
		   publisher={Springer-Verlag},
		   place={New York},
		   date={1979},
		   pages={viii+241},
		   isbn={0-387-90424-7},
		   review={\MR{554237 (82e:12016)}},
		}
	
		\bib{vanGeemen-Degree2}{article}{
		   author={van Geemen, Bert},
		   title={Some remarks on Brauer groups of $K3$ surfaces},
		   journal={Adv. Math.},
		   volume={197},
		   date={2005},
		   number={1},
		   pages={222--247},
		   issn={0001-8708},
		   review={\MR{2166182 (2006h:14025)}},
		}

	\end{biblist}
\end{bibdiv}

\end{document}